\newcommand{\diaschnitt}[2]
        {
          \unitlength#1
          \begin{picture}(1,1)
           \put(0,#2){\line(1,0){1}}
           \put(0,#2){\line(3,5){0.5}}
           \put(1,#2){\line(-3,5){0.5}}
          \end{picture}
         }
\def\diag{\mathop{\vphantom\bigcap\mathchoice
           {\diaschnitt{12pt}{-0.2}\hspace{1pt}}
           {\hspace{1pt}\diaschnitt{12pt}{-0.1}\hspace{1.5pt}}
           {\diaschnitt{12pt}{-0.2}\hspace{1.5pt}}
           {\diaschnitt{12pt}{-0.2}\hspace{1.5pt}}}}
\newcommand{\kla}[1]{ {\langle #1 \rangle} }
\newcommand{\st}{\;|\;}
\newcommand{\mdf}{:=}
\newcommand{\const}{ {\rm const} }
\newcommand{\dom}{ {\rm dom} }
\newcommand{\ran}{ {\rm ran} }
\newcommand{\Ult}{{\rm Ult}}
\newcommand{\crit}{ {\rm crit} }
\newcommand{\unt}[1]{\underline{#1}}
\newcommand{\rsupset}{\supsetneqq}
\newcommand{\sub}{\subseteq}
\newfont{\ssi}{cmssi12 at 12pt}
\newcommand{\eins}{ {1{\rm\hspace{-0.5ex}l}} }
\newcommand{\rest}{{\restriction}}
\newcommand{\On}{ {\rm On} }
\newcommand{\verl}{{{}^\frown}}
\newcommand{\leer}{\emptyset}
\newcommand{\ohne}{\setminus}
\newcommand{\lub}{\mathop{ {\rm lub}} }
\newcommand{\id}{ {\rm id} }
\newcommand{\qedd}[1]{\nopagebreak\hspace*{\fill}$ \Box_{#1} $}
\newenvironment{ea*}{\begin{eqnarray*}}{\end{eqnarray*}}
\newcommand{\claim}[2]{
     \begin{enumerate}
       \item[{#1}] {\em #2}
     \end{enumerate}}
\newcommand{\To}{\longrightarrow}
\newcommand{\emb}[1]{\longrightarrow_{#1} }
\newcommand{\bA}{{\bar{A}}}
\newcommand{\barf}{{\bar{f}}}
\newcommand{\vx}{{\vec{x}}}
\newcommand{\vU}{{\vec{U}}}
\newcommand{\valpha}{{\vec{\alpha}}}
\newcommand{\vkappa}{{\vec{\kappa}}}
\newcommand{\veta}{{\vec{\eta}}}
\newcommand{\seq}[2]{{\langle#1\;|\;}\linebreak[0]{#2\rangle}}
\renewcommand{\phi}{\varphi}
\newcommand{\V}{\ensuremath{\mathrm{V}}}
\newcommand{\forces}{\Vdash}
\newcommand{\TC}{\mathop{\mathsf{TC}}}
\def\<#1>{\langle#1\rangle}
\newcommand{\B}{{\mathord{\mathbb{B}}}}
\renewcommand{\P}{{\mathord{\mathbb P}}}
\newcommand{\MP}{\ensuremath{\mathsf{MP}}}
\newcommand{\ColNothing}{\mathrm{Col}}
\newcommand{\Col}[1]{\ColNothing(#1)}
\newcommand{\MPColNothing}[1]{\MP_{\Col{\dot{\kappa}}}}
\newcommand{\A}{\mathcal{A}}
\newcommand{\Los}{{\L}{o}{\'s}}
\newcommand{\Prikry}{P\v{r}\'{\i}kr\'{y}}
\newcommand{\prooff}[1]{\noindent{\em Proof of {#1}.~}}
\newtheorem{thm}{Theorem}[section]
\newtheorem*{thm*}{Theorem} 
\newtheorem{lem}[thm]{Lemma}
\newtheorem{fact}[thm]{Fact}
\theoremstyle{definition}
\newtheorem{defn}[thm]{Definition}
\theoremstyle{remark}
\newcommand{\Bukovsky}{Bukovsk\'{y}}
\newcommand{\MF}{\mathbb{M}}
\newcommand{\BooleanValue}[1]{\llbracket#1\rrbracket}
\newcommand{\BV}{\BooleanValue}
\newcommand{\vG}{\vec{G}}
\begin{document}
\title{The strong P\v{r}\'{i}kr\'{y} property}
\author{Gunter Fuchs}
\address{The College of Staten Island (CUNY)\\2800 Victory Blvd.~\\Staten Island, NY 10314}
\address{The Graduate Center (CUNY)\\365 5th Avenue, New York, NY 10016}
\email{gunter.fuchs@csi.cuny.edu}
\urladdr{www.math.csi.cuny.edu/~fuchs}
\thanks{The research for this paper was supported in part by PSC CUNY research grant 68604-00 46.}
\keywords{Iterated ultrapowers, Boolean ultrapowers, Prikry forcing, Magidor forcing, large cardinals}
\subjclass[2010]{03E35, 03E40, 03E45, 03E55, 03C20}

\begin{abstract}
I isolate a combinatorial property of a poset $\mathbb{P}$ that I call the strong \Prikry{} property, which implies the existence of an ultrafilter on the complete Boolean algebra $\mathbb{B}$ of $\mathbb{P}$ such that one inclusion of the Boolean ultrapower version of the so-called \Bukovsky-Dehornoy phenomenon holds with respect to $\mathbb{B}$ and $U$. I show that in all cases that were previously studied, and for which it was shown that they come with a canonical iterated ultrapower construction whose limit can be described as a single Boolean ultrapower, the posets in question satisfy this property: \Prikry{} forcing, Magidor forcing and generalized \Prikry{} forcing.
\end{abstract}

\maketitle

\section{Introduction}

This paper is a continuation of \cite{FH:BVU}.

It is well-known that if one iterates a normal measure $\mu$ on $\kappa$, and denotes the iterates $M_i$, then the sequence of critical points, $\vkappa=\seq{\kappa_i}{i<\omega}$ is \Prikry-generic over $M_\omega$, and that the intersection $\bigcap_{n<\omega}M_n$ is the same as the generic extension $M_\omega[\vkappa]$. These facts are due to \Bukovsky{} \cite{Bukovsky1973:ChangingCofinalityAlternativeProof}) and Dehornoy \cite{Dehornoy:IteratedUltrapowersAndPrikryForcing}, independently. It was also shown by \Bukovsky{} \cite{Bukovsky1977:IteratedUltrapowersAndPrikryForcing} that $M_\omega$ is a Boolean ultrapower of $\V$ by an ultrafilter on the Boolean algebra of the \Prikry{} forcing associated to $\mu$. In order to put the constellation just described more clearly in the context of Boolean ultrapowers, we introduce some terminology and list some basic facts.

Letting $\B$ be a complete Boolean algebra, maybe the completion of a forcing notion $\P$, and letting $U$ be an ultrafilter on $\B$, we write $j:\V\To\check{\V}_U$ for the Boolean ultrapower and the elementary embedding. Let's assume it is well-founded, in which case we take $\check{\V}_U$ to be transitive. The model $\check{\V}_U$ sits inside the model $\V^\B/U$, the full Boolean model. So $\V^\B/U$ consists of the equivalence classes $[\sigma]_U$ of names $\sigma$, with respect to the equivalence relation $\sigma\sim\tau$ iff the Boolean value $\BV{\sigma=\tau}\in U$. The model $\V^\B/U$ is equipped with a pseudo epsilon relation $E$, where $[\sigma]_UE[\tau]_U$ iff $\BV{\sigma\in\tau}\in U$. Again, in the case we are interested in, $E$ is well-founded and extensional, so we can take $(\V^\B/U,E)$ to be transitive, and $E$ becomes the $\in$ relation. In $\V^\B/U$, there is a special element, $G=[\dot{G}]_U$, where $\dot{G}$ is the canonical name for the generic filter. $G$ is generic over the Boolean ultrapower model $\check{\V}_U$, which is the inner model of $\V^\B/U$ consisting only of the equivalence classes of those names $\sigma$ with $\BV{\sigma\in\check{\V}}\in U$. The elementary embedding $j$ from $\V$ to $\check{\V}_U$ is defined by $j(x)=[\check{x}]_U$. Then $G$ is $j(\B)$-generic over $\check{\V}_U$, and $\check{\V}_U[G]=\V^\B/U$. This notation stems from \cite{HamkinsSeabold:BULC}, where much more information on the Boolean ultrapower construction can be found.

The Boolean ultrapower can be construed as a direct limit of models $M_A$, indexed by maximal antichains in $\B$, or in $\P$. I write $M_A$ for the ultrapower of $\V$ by the ultrafilter $U_A$ on $A$ which consist of those subsets $X$ of $A$ whose join is in $U$. If $B$ refines $A$, then there is a canonical embedding $\pi_{A,B}:M_A\To M_B$, and the maximal antichains are directed under refinement. The Boolean ultrapower $\check{\V}_U$ is the direct limit of these models and embeddings.

This suggests a generalization of the situation described above in the context of iterating a normal measure and \Prikry{} forcing. As in \cite{FH:BVU}, we say that $\B$ (or $\P$) and $U$ exhibits the \Bukovsky-Dehornoy phenomenon if
\[\V^\B/U=\bigcap_{A\sub\B}M_A,\ \text{or}\ \V^\B/U=\bigcap_{A\sub\P}M_A \]
Note that since $\V^\B/U=\check{\V}_U[G]$, this indeed generalizes the situation for \Prikry{} forcing, where the system
\[\kla{\seq{M_n}{n<\omega},\seq{\pi_{m,n}}{m\le n<\omega}}\]
is replaced with the system
\[\kla{\seq{M_A}{A\ \text{a maximal antichain}},\seq{\pi_{A,B}}{B\le^*A\sub\P}}\]
and $\le^*$ denotes refinement of antichains.

It was shown in \cite{FH:BVU} that this phenomenon does not always occur, but a sufficient criterion was found. One part of this criterion was that there had to be a ``generating'' set of antichains $A$ that are simple, meaning that $\pi_{A,\infty}^{-1}``G\in M_A$, for every $A$ in that generating set.

In the present paper, this work is continued, by exploring a property of a forcing notion $\P$ that insures that there is an ultrafilter on its complete Boolean algebra with respect to which all of its maximal antichains are simple, in a uniform way. I call this property the strong \Prikry{} property, introduced in Section \ref{sec:StrongPrikryProperty}.

I then show in Section \ref{sec:ForcingsWithSPP} that all the forcing notions we have previously analyzed in \cite{FH:BVU}, which come with a canonical iteration whose limit model can be realized as a Boolean ultrapower, satisfy the strong \Prikry{} property. I then draw the conclusion that the \Bukovsky-Dehornoy phenomenon applies to \Prikry{} forcing, Magidor forcing and short generalized \Prikry{} forcing (in the sense of \cite{Fuchs:COPS}), with respect to certain canonical ultrafilters on their Boolean algebras -- this uses the results from \cite{FH:BVU}. I close with some results on generalized \Prikry{} forcing which is not short.

\section{The strong \Prikry{} property}
\label{sec:StrongPrikryProperty}

In this section, I will develop criteria that ensure the simplicity of antichains. The following lemma from \cite{FH:BVU} characterizes precisely what it means for a sequence $\kla{G_a\st a\in A}$ to represent $G_A$ wrt.~an ultrafilter $U\sub\B$.
Let's say that a function $f:A\To\B$ such that for all $a\in A$, $f(a)\le a$, is a \emph{pressing down function}.

\begin{lem}
\label{lem:CharacterizationOfRepresentingG_A}
If $U\sub\B$ is an ultrafilter, $A\sub\B$ is a maximal antichain and $\vG=\seq{G_a}{a\in A}$ is a function, then the following are equivalent:
\begin{enumerate}
  \item $[\vG]_{U_A}=G_A$
  \item $\{a\in A\st G_a\ \text{is an ultrafilter on}\ \B\}\in U_A$, and for every pressing down function $f:A\To\B$,
      \[\bigvee\{f(a)\st a\in A\}\in U\iff\bigvee\{a\in A\st f(a)\in G_a\}\in U.\]
\end{enumerate}
\end{lem}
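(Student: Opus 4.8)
The plan is to reduce both implications to the standard dictionary between the ultrapower $M_A=\Ult(\V,U_A)$ and the Boolean-valued model $\V^\B/U$. Recall that the elements of $M_A$ are the classes $[f]_{U_A}$ of functions $f:A\To\V$, that $G_A=\pi_{A,\infty}^{-1}``G$, and that $\pi_{A,\infty}([f]_{U_A})=[\sigma_f]_U$, where $\sigma_f$ is any $\B$-name with $\BV{\sigma_f=\check{f(a)}}\ge a$ for every $a\in A$; such a name exists because $A$ is a maximal antichain (it is the mixture of the $\check{f(a)}$ along $A$). Since $G$ is an ultrafilter on $j(\B)$ and $\pi_{A,\infty}\restriction j_A(\B)$ is an injective Boolean homomorphism into $j(\B)$, the pullback $G_A$ is, unconditionally, an ultrafilter on $j_A(\B)$. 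Finally, for a pressing down function $f:A\To\B$ one has $[f]_{U_A}\in j_A(\B)$ by \Los{}'s theorem, and, writing $\dot G$ for the canonical name (so that $G=[\dot G]_U$ and $\BV{\check b\in\dot G}=b$ for every $b\in\B$),
\[
  \BV{\sigma_f\in\dot G}=\bigvee_{a\in A}\bigl(a\wedge\BV{\check{f(a)}\in\dot G}\bigr)=\bigvee_{a\in A}\bigl(a\wedge f(a)\bigr)=\bigvee_{a\in A}f(a),
\]
the last equality holding because $f(a)\le a$. As $[\sigma_f]_U\in[\dot G]_U$ iff $\BV{\sigma_f\in\dot G}\in U$, and $[f]_{U_A}\in G_A$ iff $\pi_{A,\infty}([f]_{U_A})\in G$, this gives the equivalence I will call $(\ast)$: for every pressing down function $f:A\To\B$, one has $[f]_{U_A}\in G_A$ if and only if $\bigvee_{a\in A}f(a)\in U$.

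For (1)$\Rightarrow$(2), assume $[\vG]_{U_A}=G_A$. Then $[\vG]_{U_A}$ is an ultrafilter on $j_A(\B)$; as $M_A$ is transitive and contains $[\vG]_{U_A}$ and $j_A(\B)$, this is absolute, so it holds from the point of view of $M_A$ as well, and \Los{}'s theorem yields $\{a\in A\st G_a\ \text{is an ultrafilter on}\ \B\}\in U_A$. For the second clause, fix a pressing down function $f:A\To\B$; by \Los{}'s theorem, $[f]_{U_A}\in[\vG]_{U_A}$ iff $\{a\in A\st f(a)\in G_a\}\in U_A$ iff $\bigvee\{a\in A\st f(a)\in G_a\}\in U$, and combining this with $(\ast)$ and the hypothesis $G_A=[\vG]_{U_A}$ gives $\bigvee_{a\in A}f(a)\in U$ iff $\bigvee\{a\in A\st f(a)\in G_a\}\in U$.

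For (2)$\Rightarrow$(1), the first clause of (2) together with \Los{}'s theorem and the transitivity of $M_A$ shows that $[\vG]_{U_A}$ is an ultrafilter on $j_A(\B)$, in particular a filter. Let $\id_A:A\To\B$ be the map $a\mapsto a$; it is a pressing down function with $\bigvee_{a\in A}\id_A(a)=\bigvee A=\eins\in U$, so the second clause of (2) applied to $\id_A$ gives $[\id_A]_{U_A}\in[\vG]_{U_A}$, and $(\ast)$ applied to $\id_A$ gives $[\id_A]_{U_A}\in G_A$. Now fix an arbitrary $b\in j_A(\B)$, write $b=[h]_{U_A}$ with $h:A\To\B$, and put $h^+(a)=h(a)\wedge a$; this is a pressing down function with $[h^+]_{U_A}=b\wedge[\id_A]_{U_A}$. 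Since $G_A$ and $[\vG]_{U_A}$ are ultrafilters on $j_A(\B)$ that both contain $[\id_A]_{U_A}$, each of them contains $b$ if and only if it contains $b\wedge[\id_A]_{U_A}=[h^+]_{U_A}$. And by $(\ast)$, the second clause of (2) applied to $h^+$, and \Los{}'s theorem, the statement $[h^+]_{U_A}\in G_A$ is equivalent, successively, to $\bigvee_{a\in A}h^+(a)\in U$, to $\bigvee\{a\in A\st h^+(a)\in G_a\}\in U$, and to $[h^+]_{U_A}\in[\vG]_{U_A}$. Hence $b\in G_A$ iff $b\in[\vG]_{U_A}$ for every $b\in j_A(\B)$, so $G_A=[\vG]_{U_A}$.

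The only non-routine ingredient is the dictionary $\pi_{A,\infty}([f]_{U_A})=[\sigma_f]_U$ together with the identity $\BV{\check b\in\dot G}=b$, i.e.\ the precise way the elements of the factors $M_A$ are realized by $\B$-names and the computation of the associated Boolean values; granted this, both implications are bookkeeping with \Los{}'s theorem. The one point to watch is that a general element of $j_A(\B)$ need not be represented by a pressing down function, which is why in (2)$\Rightarrow$(1) one first intersects with the seed $[\id_A]_{U_A}$ before invoking $(\ast)$.
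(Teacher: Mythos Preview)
Your proof is correct. The paper does not supply its own argument for this lemma; it is quoted from \cite{FH:BVU}, so there is nothing to compare against here. Your reduction to the identity $\BV{\sigma_f\in\dot G}=\bigvee_{a\in A}f(a)$ for pressing down $f$, together with \Los{}'s theorem, is exactly the right dictionary, and your device of intersecting with the seed $[\id_A]_{U_A}$ to reduce an arbitrary element of $j_A(\B)$ to one represented by a pressing down function is clean and standard. One small remark: your appeal to transitivity of $M_A$ (to transfer ``is an ultrafilter on $j_A(\B)$'' between $\V$ and $M_A$) tacitly uses the well-foundedness hypothesis that pervades the paper; without it one would argue directly via \Los{} rather than absoluteness, but the paper works throughout under that assumption, so this is harmless.
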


I want to find a natural way to arrive at such a representation of $G_A$.
It seems that in the paradigmatic cases, \Prikry{} forcing, Magidor forcing and generalized \Prikry{} forcing, a variant of the \Prikry{} property is crucial. It will turn out that the direct extensions of a fixed condition generate an ultrafilter on $\B$, and these ultrafilters give rise to a representing function $\vG$.

\subsection{...for a complete Boolean algebra}
\label{subsec:StrongPrikryForCBA}

I will first formulate a condition for the \emph{Boolean algebra,} and will then work my way to the strong \Prikry{} property for the \emph{poset.} When using a Boolean algebra $\B$ as a forcing poset, one naturally has to remove its zero element - let's call the resulting partial order $\B^+$.

\begin{defn}
\label{defn:StrongPrikryPropertyCBA}
Let $\B=\kla{\B,\le}$ be a complete Boolean algebra. Let $\le_1\sub\le\rest\B^+$ be reflexive. Then $\kla{\B,\le,\le_1}$ has the \emph{\Prikry{} property} if for every statement $\phi$ in the forcing language of $\B^+$ and for every condition $b\in\B^+$, there is a $c\le_1 b$ such that $c||\phi$, that is, $c$ decides $\phi$, which means that either $c\forces\phi$ or $c\forces\neg\phi$.

$\kla{\B,\le,\le_1}$ has the \emph{strong \Prikry{} property (as a complete Boolean algebra)} if it has the \Prikry{} property, and if the following hold true:
\begin{enumerate}
  \item
  \label{item:DirectednessCBA}
  It is \emph{directed:} if we let $U_b=\{c\st c\le_1 b\}$, then $U_b$ is directed with respect to $\le$. I.e., for any $q,r\le_1b$, there is an $s\neq 0$ with $s\le_1b$ and $s\le q,r$.
  \item
  \label{item:ConnectednessCBA}
  It is \emph{connected:} if $c\le_1 b$ and $c\le c'\le b$, then $c'\le_1 b$.

  \item
  \label{item:CapturingACBA}
  \emph{Maximal antichains are captured:}
  If $A$ is a maximal antichain in $\B$ and $f:A\To\B$ is a pressing down function, then the following are equivalent:
  \begin{enumerate}
    \item $\bigvee\{f(a)\st a\in A\}\le_1\eins$
    \item $\bigvee\{a\in A\st f(a)\le_1 a\}\le_1\eins$
  \end{enumerate}
\end{enumerate}
Let's say that $\kla{\B,\le}$ has the strong \Prikry{} property as a complete Boolean algebra if there is a subordering $\le_1$ of $\le$ such that $\kla{\B,\le,\le_1}$ has the strong \Prikry{} property as a complete Boolean algebra.
\end{defn}

The strong \Prikry{} property naturally gives rise to ultrafilters on $\B$.

\begin{lem}
\label{lem:DirectedPrikryPropertyGivesUltrafiltersCBA}
If $\B$ is a complete Boolean algebra and $\kla{\B,\le,\le_1}$ satisfies the ``directed'' \Prikry{} property (i.e., the \Prikry{} property and condition \ref{item:DirectednessCBA} of Definition \ref{defn:StrongPrikryPropertyCBA}), then for every $0\neq b\in\B$,
\[ G_b=\{c\in\B\st\exists \bar{c}\le_1 b\quad \bar{c}\le c\}\]
is an ultrafilter on $\B$ containing $b$.
\end{lem}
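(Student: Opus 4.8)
The plan is to check the defining properties of an ultrafilter on $\B$ in turn, using only the two hypotheses at hand: the \Prikry{} property and directedness (condition~\ref{item:DirectednessCBA} of Definition~\ref{defn:StrongPrikryPropertyCBA}).

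First the routine closure properties. Reflexivity of $\le_1$ gives $b\le_1 b\le b$, so $b\in G_b$. If $c\in G_b$, as witnessed by some $\bar c\le_1 b$ with $\bar c\le c$, and $c\le c'$, then the same $\bar c$ witnesses $c'\in G_b$; thus $G_b$ is upward closed, and in particular $\eins\in G_b$. Also $0\notin G_b$: since $\le_1\sub\,\le\rest\B^+$, any witness $\bar c\le_1 b$ lies in $\B^+=\B\ohne\{0\}$, so $\bar c\le 0$ is impossible. For closure under finite meets I invoke directedness: given $c,d\in G_b$ with witnesses $\bar c,\bar d\le_1 b$, directedness of $U_b$ produces $s\neq 0$ with $s\le_1 b$ and $s\le\bar c,\bar d$; then $s\le c\wedge d$, so $s$ witnesses $c\wedge d\in G_b$. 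At this point $G_b$ is a proper filter on $\B$ containing $b$.

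It remains to see that $G_b$ is an ultrafilter, i.e.\ that for every $c\in\B$ either $c\in G_b$ or $-c\in G_b$ (the complementary clause, that not both can hold, follows from closure under meets, since $c\wedge(-c)=0\notin G_b$). Here I use the \Prikry{} property applied to the condition $b$ and the statement $\phi\equiv(\check c\in\dot G)$, where $\dot G$ is the canonical name for the generic filter: it yields $c'\le_1 b$ deciding $\phi$. Since $d\forces\check c\in\dot G$ iff $d\le c$, and $d\forces\check c\notin\dot G$ iff $d\le -c$, we get $c'\le c$ or $c'\le -c$, hence $c\in G_b$ or $-c\in G_b$, as desired.

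I do not anticipate a real obstacle; the argument is essentially bookkeeping once the right statement $\phi$ is chosen. The one place meriting attention is the last paragraph's passage between ``$c'$ decides $\check c\in\dot G$'' and the Boolean inequalities $c'\le c$, $c'\le -c$ — that is, recalling that $\BV{\check c\in\dot G}=c$ — but this is standard Boolean-valued forcing.
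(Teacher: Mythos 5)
Your proof is correct and follows essentially the same route as the paper's: reflexivity and upward closure give a filter, directedness gives compatibility (finite meets), and deciding the statement $\check{c}\in\dot{G}$ via the \Prikry{} property yields $c\in G_b$ or $-c\in G_b$. The only difference is that you spell out a few routine points (properness of the filter, the Boolean value $\BV{\check{c}\in\dot{G}}=c$) that the paper leaves implicit.
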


\begin{proof}
$G_b$ contains $p$ because $\le_1$ is reflexive. If $c,d\in G_b$, then there are $\bar{c},\bar{d}\le_1 b$ such that $\bar{c}\le c$ and $\bar{d}\le d$. But then, by directedness, there is $0\neq s\le_1 b$ such that $s\le\bar{c},\bar{d}$. So $s\in G_b$ is below $c,d$. It is obvious that $G_p$ is upward closed, so it is a filter. It is an ultrafilter because of the \Prikry{} property: given $c\in\B$, consider the statement $\phi=(\check{c}\in\dot{G})$. Let $a\le_1 b$ decide $\phi$. Then either $a\le\BV{\phi}=c$ or $a\le\BV{\neg\phi}=-c$. So either $c\in G_b$ or $-c\in G_b$, depending on how $a$ decides $\phi$.
\end{proof}

It's maybe worth pointing out that the requirement that any statement of the forcing language can be decided by a $\le_1$-extension of any given $b$ can be equivalently expressed by saying that for any $d,b\in\B$, there is a $c\le_1 b$ such that $c\le d$ or $c\le -d$.

\begin{lem}
\label{lem:StrongPrikryImpliesG_AinM_A-CBA}
Let $\kla{\B,\le}$ be a complete Boolean algebra, and let $\le_1$ be such that $\kla{\B,\le,\le_1}$ satisfies the strong \Prikry{} property (as a cBa) with respect to the maximal antichain $A\sub\B$ (i.e., part \ref{item:CapturingACBA} of Definition \ref{defn:StrongPrikryPropertyCBA} holds for $A$). For $b\in\B$, let $G_b$ be defined as in Lemma \ref{lem:DirectedPrikryPropertyGivesUltrafiltersCBA}. Then $G_A\in M_A$, where everything is defined in terms of $U=G_{\eins}$ (which is an ultrafilter on $\B$, by Lemma \ref{lem:DirectedPrikryPropertyGivesUltrafiltersCBA}.) In fact, $G_A=[\seq{G_p}{p\in A}]_{U_A}$.
\end{lem}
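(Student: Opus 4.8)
The goal is to verify that $\vG := \seq{G_p}{p\in A}$ satisfies condition (2) of Lemma \ref{lem:CharacterizationOfRepresentingG_A}, with respect to the ultrafilter $U = G_{\eins}$ and the antichain $A$; then Lemma \ref{lem:CharacterizationOfRepresentingG_A} immediately gives $[\vG]_{U_A} = G_A$, which is what we want (and in particular $G_A \in M_A$, since $\vG$ witnesses membership). So the work is entirely in checking the two clauses of Lemma \ref{lem:CharacterizationOfRepresentingG_A}(2).

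For the first clause, I must show $\{a\in A\st G_a\text{ is an ultrafilter on }\B\}\in U_A$. By Lemma \ref{lem:DirectedPrikryPropertyGivesUltrafiltersCBA}, $G_a$ is an ultrafilter on $\B$ for \emph{every} $a\in A$ with $a\neq 0$ — and every element of a maximal antichain is nonzero. Hence the set in question is all of $A$, whose join is $\eins\in U$, so it lies in $U_A$ by definition of $U_A$. This clause is essentially free.

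The substantive clause is the second: for every pressing down function $f:A\To\B$,
\[
\bigvee\{f(a)\st a\in A\}\in U \iff \bigvee\{a\in A\st f(a)\in G_a\}\in U.
\]
Here the plan is to translate both sides into the language of $\le_1$ and then invoke the "captured" clause \ref{item:CapturingACBA} of the strong \Prikry{} property. Recall $U = G_{\eins}$, so for any $d\in\B$, $d\in U$ iff there is $\bar d\le_1\eins$ with $\bar d\le d$, and by the "connected" clause \ref{item:ConnectednessCBA} this is equivalent to $d\le_1\eins$ provided $d\le\eins$ (which is automatic); more precisely, $d\in G_\eins$ iff $d\le_1\eins$ — one direction is connectedness, the other is reflexivity of $\le_1$. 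Thus $\bigvee\{f(a)\st a\in A\}\in U$ is equivalent to $\bigvee\{f(a)\st a\in A\}\le_1\eins$, which is exactly clause (a) of \ref{item:CapturingACBA}. Similarly, for the right-hand side, the key is to show that for $a\in A$, "$f(a)\in G_a$" is equivalent to "$f(a)\le_1 a$": again one direction is reflexivity, the other is connectedness applied with $b=a$ (note $f(a)\le a$ since $f$ presses down, so the hypothesis $c\le c'\le b$ of connectedness is met with $c = \bar{c}\le_1 a$ witnessing $f(a)\in G_a$, $c' = f(a)$, $b = a$ — one needs $\bar{c}\le f(a)$, which is exactly what membership in $G_a$ provides). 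Consequently $\bigvee\{a\in A\st f(a)\in G_a\} = \bigvee\{a\in A\st f(a)\le_1 a\}$, and "$\in U$" converts this to "$\le_1\eins$", i.e.\ clause (b) of \ref{item:CapturingACBA}. So the desired biconditional is precisely the equivalence (a)$\iff$(b) asserted by \ref{item:CapturingACBA} for the antichain $A$.

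**Main obstacle.** There is no deep obstacle; the only thing requiring care is the bookkeeping translating membership in the $G_a$'s and in $U=G_\eins$ back and forth with the relation $\le_1$, making sure the pressing-down hypothesis $f(a)\le a$ is used exactly where connectedness needs $c\le c'\le b$, and confirming that clause \ref{item:CapturingACBA} is stated for arbitrary pressing down functions on $A$ (it is). One should also double-check that nonzeroness is available wherever Lemma \ref{lem:DirectedPrikryPropertyGivesUltrafiltersCBA} is invoked — but elements of a maximal antichain are nonzero by definition, so this is immediate. Thus the proof is a short chain of rewritings culminating in a single application of the "captured" clause together with Lemma \ref{lem:CharacterizationOfRepresentingG_A}.
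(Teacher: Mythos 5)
Your proposal is correct and follows essentially the same route as the paper's own proof: verify clause (2) of Lemma \ref{lem:CharacterizationOfRepresentingG_A}, using Lemma \ref{lem:DirectedPrikryPropertyGivesUltrafiltersCBA} for the first part, and for the second part translating $b\in U=G_\eins$ into $b\le_1\eins$ and $f(a)\in G_a$ into $f(a)\le_1 a$ via connectedness (with the pressing-down hypothesis $f(a)\le a$), so that the biconditional becomes exactly clause \ref{item:CapturingACBA}. The only cosmetic remark is that the trivial directions of your two translations are really just the definition of $G_b$ (take the witness to be the element itself) rather than reflexivity of $\le_1$, but this does not affect the argument.
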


\begin{proof} Let $\vec{G}=\seq{G_a}{a\in A}$, where $G_a$ is defined as in Lemma \ref{lem:DirectedPrikryPropertyGivesUltrafiltersCBA}. I will show that 2.~of Lemma \ref{lem:CharacterizationOfRepresentingG_A} is satisfied.
By Lemma \ref{lem:DirectedPrikryPropertyGivesUltrafiltersCBA}, every $G_a$ is an ultrafilter on $\B$, so the first part of 2.~is clear. For the second part, let $f:A\To\B$ be a pressing down function. It has to be shown that
\claim{$(*)$}{$\bigvee\{f(a)\st a\in A\}\in U\iff\bigvee\{a\in A\st f(a)\in G_a\}\in U$}

Note that for $a\in A$, $f(a)\in G_a$ iff $f(a)\le_1 a$: the direction from right to left is obvious, and the converse holds because $f(a)$ is a pressing down function. Namely, if $f(a)\in G_a$, then by definition, there is a $c\le_1 a$ with $c\le f(a)$. But $f(a)\le a$, and so, by connectedness, it follows that $f(a)\le_1 a$.

Similarly, since  $U=G_\eins$, $b\in U$ iff $b\le_1\eins$. So $(*)$ can be expressed equivalently by saying that condition \ref{item:CapturingACBA} of Definition \ref{defn:StrongPrikryPropertyCBA} holds, which was assumed. \end{proof}

Before formulating a global version of the previous lemma, instead of focusing on one maximal antichain, let us introduce a new concept.

\begin{defn}
\label{defn:UniformRepresentationCBA}
Let $U$ be an ultrafilter on the complete Boolean algebra $\B$. If $A\sub\B$ is a maximal antichain and $x\sub\check{\V}_U$, then I write $x_A=\pi_{A,\infty}^{-1}``x$, where $\pi_{A,\infty}:\V^A/U_A\To\check{\V}_U$ is the canonical embedding.

A set $x\sub\check{\V}_U$ is \emph{uniformly represented} (wrt.~$U$, over $\B$) if there is a function $\vx=\seq{x_b}{b\in\B}$ such that for every maximal antichain $A\sub\B$, $x_A=[\vx\rest A]_{U_A}$. In this case, I call $\vx$ a \emph{uniform representation of $x$} (wrt.~$U$).
\end{defn}

For now, I shall be mostly interested in situations where $G$ is uniformly represented.

\begin{thm}
\label{thm:StrongPrikryImpliesGisUniformlyRepresentedCBA}
If $\B$ is a complete Boolean algebra and $\le_1$ is such that $\kla{\B,\le,\le_1}$ satisfies the strong \Prikry{} property, then
$G_A\in M_A$, for every maximal antichain $A\sub\P$, where everything is defined in terms of $U=G_{\eins}$. Moreover, letting $G_b$ be defined as in Lemma \ref{lem:DirectedPrikryPropertyGivesUltrafiltersCBA}, and letting $\vec{G}=\seq{G_b}{b\in\B}$, it follows that $\vG$ is a uniform representation of $G$, that is, for every maximal antichain $A\sub\B$, $G_A=[\vec{G}\rest A]_{U_A}$.
\end{thm}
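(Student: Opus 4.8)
The plan is to reduce Theorem~\ref{thm:StrongPrikryImpliesGisUniformlyRepresentedCBA} to Lemma~\ref{lem:StrongPrikryImpliesG_AinM_A-CBA} by observing that the hypothesis of the theorem --- the full strong \Prikry{} property --- is exactly the hypothesis of that lemma, but now holding \emph{for every} maximal antichain $A\sub\B$ simultaneously, rather than for a single fixed one. So first I would fix $U=G_{\eins}$, noting that by Lemma~\ref{lem:DirectedPrikryPropertyGivesUltrafiltersCBA} (applied with $b=\eins$, which is legitimate since the strong \Prikry{} property includes the directed \Prikry{} property) $U$ is indeed an ultrafilter on $\B$; this is the object with respect to which all the $M_A$, $U_A$, $\pi_{A,\infty}$ and $G_A$ are computed.

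Next I would let $A\sub\B$ be an arbitrary maximal antichain and apply Lemma~\ref{lem:StrongPrikryImpliesG_AinM_A-CBA} to it: since $\kla{\B,\le,\le_1}$ satisfies the strong \Prikry{} property as a cBa, condition~\ref{item:CapturingACBA} of Definition~\ref{defn:StrongPrikryPropertyCBA} holds in particular for this $A$, so the lemma yields $G_A\in M_A$ together with the explicit representation $G_A=[\seq{G_p}{p\in A}]_{U_A}$. Since $A$ was arbitrary, $G_A\in M_A$ for every maximal antichain, which is the first assertion.

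For the ``moreover'' clause I would simply unwind Definition~\ref{defn:UniformRepresentationCBA}: setting $\vG=\seq{G_b}{b\in\B}$, the claim that $\vG$ is a uniform representation of $G$ means precisely that for every maximal antichain $A\sub\B$ one has $G_A=[\vG\rest A]_{U_A}$, where $\vG\rest A=\seq{G_p}{p\in A}$. But this is exactly the identity delivered by Lemma~\ref{lem:StrongPrikryImpliesG_AinM_A-CBA} in the previous step. (One should also remark that $G=[\dot G]_U=\pi_{A,\infty}(G_A)$ in the relevant sense, so that $G_A$ really is $\pi_{A,\infty}^{-1}{}``G$ in the notation of the definition --- i.e.\ that $x=G$ is the set being represented --- but this is built into the setup recalled in the introduction, where $G_A$ is by definition the preimage of $G$ under $\pi_{A,\infty}$.) Hence $\vG$ is a uniform representation of $G$ with respect to $U$, completing the proof.

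There is essentially no obstacle here beyond bookkeeping: all the work was done in Lemma~\ref{lem:StrongPrikryImpliesG_AinM_A-CBA} and Lemma~\ref{lem:DirectedPrikryPropertyGivesUltrafiltersCBA}, and the theorem is the uniform-in-$A$ packaging of those. The only point that deserves a sentence of care is that the \emph{same} ultrafilter $U=G_{\eins}$ and the \emph{same} family $\vG=\seq{G_b}{b\in\B}$ work for all antichains at once --- but this is immediate because both $U$ and $\vG$ are defined from $\le_1$ without reference to any particular antichain, while Definition~\ref{defn:StrongPrikryPropertyCBA}\eqref{item:CapturingACBA} is quantified over all maximal antichains. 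So the proof is a short two-step citation of the two preceding lemmas.
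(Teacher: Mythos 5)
Your proposal is correct and is essentially the paper's own route: the paper states this theorem without a separate proof precisely because it is the ``for every maximal antichain $A$'' packaging of Lemma \ref{lem:StrongPrikryImpliesG_AinM_A-CBA}, with $U=G_\eins$ an ultrafilter by Lemma \ref{lem:DirectedPrikryPropertyGivesUltrafiltersCBA} and the uniform representation clause being exactly the identity $G_A=[\seq{G_b}{b\in A}]_{U_A}$ that the lemma delivers for each $A$. Your observation that the same $U$ and the same $\vG$ serve all antichains simultaneously, since they are defined from $\le_1$ alone while condition \ref{item:CapturingACBA} of Definition \ref{defn:StrongPrikryPropertyCBA} is quantified over all maximal antichains, is the only point needing mention, and you made it.
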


\begin{thm}
\label{thm:CharacterizationOfStrongPrikryForCBA}
Let $\B$ be a complete Boolean algebra, and let $U$ be an ultrafilter on $\B$. Let $j:\V\To_U\check{\V}_U$ be the Boolean ultrapower, and let $M_A=\Ult(\V,U_A)$, for any maximal antichain $A\sub\B$. Let $G=[\dot{G}]_U$. Then the following are equivalent:
\begin{enumerate}
  \item
  \label{item:Equivalence1UniformRepresenation}
  $G$ has a uniform representation with respect to $U$.
  \item
  \label{item:Equivalence2StrongPrikry}
  $\B$ satisfies the strong \Prikry{} property with respect to a direct extension ordering $\le_1$ such that if we set $G_b=\{c\in\B\st\exists a\le_1 b\quad a\le c\}$, then $G_\eins=U$.
\end{enumerate}
\end{thm}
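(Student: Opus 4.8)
The implication \eqref{item:Equivalence2StrongPrikry}$\Rightarrow$\eqref{item:Equivalence1UniformRepresenation} is immediate from Theorem \ref{thm:StrongPrikryImpliesGisUniformlyRepresentedCBA}: if $\B$ has the strong \Prikry{} property with respect to a direct extension ordering $\le_1$ for which $G_\eins = U$, then that theorem says $\vG=\seq{G_b}{b\in\B}$ is a uniform representation of $G$ with respect to $U$, so in particular $G$ has a uniform representation. The content of the theorem is therefore the converse, \eqref{item:Equivalence1UniformRepresenation}$\Rightarrow$\eqref{item:Equivalence2StrongPrikry}, and this is where I would do the work.

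So suppose $G=[\dot G]_U$ has a uniform representation $\vx=\seq{x_b}{b\in\B}$, meaning $G_A := \pi_{A,\infty}^{-1}``G = [\vx\rest A]_{U_A}$ for every maximal antichain $A$. The plan is to \emph{read off} a direct extension ordering $\le_1$ from this data and verify the strong \Prikry{} axioms. Applying the uniform representation to the trivial antichain $A=\{\eins\}$ gives $x_\eins = G_\eins = U$ (identifying $M_{\{\eins\}}$ with $\V$ and $\pi_{\{\eins\},\infty}$ with $j$, so that $G_{\{\eins\}}=\pi_{\{\eins\},\infty}^{-1}``G = j^{-1}``G = U$ by the standard fact that $\check x\in\dot G$ has Boolean value... well, that $j(x)\in G$ iff $x\in U$). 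More usefully, applying it to antichains of the form $\{b,-b\}$ and refinements thereof, one learns that on a $U_A$-large set of $a\in A$ the object $x_a$ is an ultrafilter on $\B$ containing $a$: indeed $G_A$ is computed inside $M_A$ from $G$ and the antichain, and $\pi_{A,\infty}(G_A)$ is literally $G\cap j(A)$'s downward closure, which is an ultrafilter; by \Los{}'s theorem applied to $M_A=\Ult(\V,U_A)$, the statement "$x_a$ is an ultrafilter on $\B$ containing $a$" holds for $U_A$-almost all $a$. Define
\[
  c\le_1 b \quad\dfequiv\quad c\le b\ \text{ and }\ b\in x_c .
\]
One checks $\le_1$ is reflexive on $\B^+$ (since $x_b$ contains $b$ for $U$-almost all $b$; one may need to first shrink $\vx$ on a $U$-null set to a canonical choice so that $x_b\ni b$ always — do this at the outset). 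Now $G_b$ in the sense of the theorem statement, $\{c\st\exists a\le_1 b\ a\le c\}$, must be shown to equal $x_b$, and in particular $G_\eins=x_\eins=U$.

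The heart of the argument is verifying the three strong \Prikry{} axioms and the \Prikry{} property itself for this $\le_1$. For the \emph{\Prikry{} property}: given $\phi$ and $b$, I want $c\le_1 b$ deciding $\phi$, i.e.\ $c\le\BV\phi$ or $c\le\BV{\neg\phi}$; equivalently, by the remark after Lemma \ref{lem:DirectedPrikryPropertyGivesUltrafiltersCBA}, for any $d$ some $c\le_1 b$ has $c\le d$ or $c\le -d$ — this follows because $x_b$ is an ultrafilter, so $d\in x_b$ or $-d\in x_b$, and membership in $x_b$ should translate (via the definition of $\le_1$ and connectedness, to be proved) into the existence of such a $c$. \emph{Directedness} and \emph{connectedness} are then the ultrafilter and upward-closure properties of $x_b$, transported through the definition of $\le_1$. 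The \emph{capturing} axiom \eqref{item:CapturingACBA} is exactly the place where uniform representation (rather than just representation at the single antichain $\{\eins\}$) is used: given a maximal antichain $A$ and pressing-down $f:A\to\B$, the equivalence of "$\bigvee f(a)\le_1\eins$" and "$\bigvee\{a\st f(a)\le_1 a\}\le_1\eins$" should be extracted by feeding $f$ into Lemma \ref{lem:CharacterizationOfRepresentingG_A}(2) for the antichain $A$, using that $[\vx\rest A]_{U_A}=G_A$, and then rewriting the two conditions there ("$\bigvee f(a)\in U$" and "$\bigvee\{a\st f(a)\in x_a\}\in U$") in terms of $\le_1$ via the identities $b\le_1\eins\iff b\in x_\eins=U$ and $f(a)\in x_a\iff f(a)\le_1 a$ (the latter using connectedness, since $f(a)\le a$). \textbf{The main obstacle} I anticipate is precisely closing this loop coherently: one needs $b\le_1\eins\iff b\in U$ and $f(a)\le_1 a\iff f(a)\in x_a$ to hold \emph{on the nose}, but the $x_a$ are only controlled $U_A$-almost everywhere and only after possibly modifying $\vx$ on null sets; so the careful bookkeeping is to fix once and for all a representative $\vx$ with $x_b$ a genuine ultrafilter containing $b$ for every $b\in\B^+$ (arguing that one can, since the defect set is $U$-null for each relevant antichain and there are enough antichains), and then to check that with this choice $G_b$ as defined in the theorem coincides with $x_b$. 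Once that coherence is in place, each of the four properties is a short translation of the corresponding ultrafilter/representation fact, and the definition of $\le_1$ makes everything go through; the subordering clause "$\le_1\ \sub\ \le\rest\B^+$ reflexive" is built into the definition, and $G_\eins=U$ is the $A=\{\eins\}$ instance.
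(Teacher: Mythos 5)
Your plan is essentially the paper's own proof: the forward direction is quoted from Theorem \ref{thm:StrongPrikryImpliesGisUniformlyRepresentedCBA}, and for the converse you normalize the representation so that every $x_b$ is an ultrafilter containing $b$ (by a \Los{} argument, changing it only on $U_A$-null sets, which preserves the representation at every antichain), read off $x_\eins=U$ from the antichain $\{\eins\}$, define a direct extension ordering from membership in the $x_b$'s, and verify the \Prikry{} property, directedness, connectedness and capturing by translating ultrafilter facts, with capturing obtained by feeding $f$ into Lemma \ref{lem:CharacterizationOfRepresentingG_A} at the antichain $A$. What you flag as the main obstacle (fixing once and for all a representative with $x_b$ an ultrafilter containing $b$ for every $b$) is handled in the paper by exactly the null-set modification you propose, so there is no hidden difficulty there.

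One correction is needed in your displayed definition: you wrote $c\le_1 b$ iff $c\le b$ and $b\in x_c$, but the ordering must be $c\le_1 b$ iff $c\le b$ and $c\in x_b$, i.e., the \emph{extension} belongs to the ultrafilter attached to the \emph{weaker} condition. As written, after your normalization we have $\eins\in x_c$ for every $c$, so every condition would be a direct extension of $\eins$; the generated filter $G_\eins$ would then contain all of $\B^+$ and could not equal $U$, and the \Prikry{} property argument (deciding $\phi$ using that $x_b$ is an ultrafilter) would not go through, since it produces an element of $x_b$, not a condition $c$ with $b\in x_c$. Your own later identities $b\le_1\eins\iff b\in U$ and $f(a)\le_1 a\iff f(a)\in x_a$, on which the capturing step rests, hold only for the corrected orientation, so the slip is purely local. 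With that fix the argument goes through as in the paper, including the verification that the filter generated by $\{c\st c\le_1 b\}$ equals $x_b$ itself, giving $G_\eins=x_\eins=U$ as required by clause \ref{item:Equivalence2StrongPrikry}.
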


\begin{proof} We already know that if $\B$ has the strong \Prikry{} property, then the canonical sequence $\kla{G_b\st b\in\B}$ defined in \ref{item:Equivalence2StrongPrikry} is a uniform representation of $G$ with respect to $G_\eins$, by Theorem \ref{thm:StrongPrikryImpliesGisUniformlyRepresentedCBA}.

Let us turn to the converse. Let $\kla{G_b\st b\in\B}$ be a uniform representation of $G$. First, note that we may assume that for every $b\in\B$, $G_b$ is an ultrafilter on $\B$ with $b\in G_b$. For if we define $G'_b$ to be equal to $G_b$ if $G_b$ is an ultrafilter on $\B$ with $b\in G_b$, and otherwise we let $G'_b$ be a randomly chosen ultrafilter on $\B$ with $b\in G'_b$, then for every maximal antichain $A$, it follows that $[\kla{G_a\st a\in A}]_{U_A}=[\kla{G'_a\st a\in A}]_{U_A}$. This is because in $M_A$, it is true that $[\kla{G_a\st a\in A}]_{U_A}$ is an ultrafilter on $[\const_\B]_{U_A}$ with $[\id]_{U_A}\in[\kla{G_a\st a\in A}]_{U_A}$. This means by \Los{} that the set $X=\{a\in A\st G_a\ \text{is an ultrafilter on $\B$  with}\ a\in G_a\}\in U_A$. For $a\in X$, $G_a=G'_a$, so $[\kla{G_a\st a\in A}]_{U_A}=[\kla{G'_a\st a\in A}]_{U_A}$.

Next, if we consider the maximal antichain $A=\{\eins\}$, then clearly, $M_A=\V$ and $\pi_{A,\infty}=j$, so $G_A=j^{-1}``G=U$. So, $U=G_A=[\{\kla{\eins,G_\eins}\}]_{U_A}=G_\eins$.

Now, one can define a ``direct extension ordering'' on $\B$ by setting $a\le_1b$ iff $a\le b$ and $a\in G_b$. I claim that $\kla{\B,\le,\le_1}$ satisfies the strong \Prikry{} property.

Clearly, $\le_1$ is a suborder of $\le$, and it is reflexive, because $b\in G_b$, for all $b\in\B$. Every statement of the forcing language is decided by a direct extension of any $b\neq 0$, because $G_b$ is an ultrafilter on $\B$. In more detail, if the statement in question is $\phi$, then let $c$ be either $\BV{\phi}$ or $\BV{\neg\phi}$, so that $c\in G_b$. Since $b\in G_b$, we can let $\bar{b}$ be a common extension of $b$ and $c$, $\bar{b}\in G_b$. Then $\bar{b}\le_1 b$, and $\bar{b}$ decides $\phi$. This shows that the \Prikry{} property holds.

For the strong \Prikry{} property, the directedness 1.~follows because the set of $\le_1$ extensions of a given condition generates an ultrafilter. We even get a stronger form of directedness: if $q,r\le_1 b$, then $q\land r\le_1b$.

The connectedness condition 2.~follows because $G_p$ is upwards closed.

Note that
\[U_b=G_b\cap\{q\in\B\st q\le b\}\]
where $U_b=\{c\st c\le_1 b\}$. From left to right, if $c\le_1 b$, then this means that $c\le b$ and $c\in G_b$. From right to left, suppose $c\in G_b$ and $c\le b$. This means just by definition that $c\le_1 b$, so $c\in U_b$.

Note also that $c\in G_b$ iff there is an $r\le_1 b$ such that $r\le c$. The direction from left to right follows because we can take $r$ to be $b\land c$. The direction from right to left is trivial. Now condition 3.~follows, because for any maximal antichain $A$, since $[\vec{G}\rest A]_{U_A}=G_A$, we know by the considerations from the beginning of the present section that for any pressing down function $f$ on $A$,
\[\bigvee\{f(a)\st a\in A\}\in U\iff\bigvee\{a\in A\st f(a)\in G_a\}\in U.\]
But $U=G_\eins$, so the left hand side here is equivalent to saying that
$\bigvee\{f(a)\st a\in A\}\le_1\eins$, and the right hand side is equivalent to saying that $\bigvee\{a\in A\st f(a)\le_1 a\}\le_1\eins$, since $f(a)\le a$. The latter is because since $f(a)\le a$, it follows that $f(a)\le_1 a$ iff $f(a)\in G_a$. \end{proof}

It's maybe worth pointing out that the $\le_1$ ordering defined in the previous lemma is not necessarily transitive. For this, one would need that for $p\le q$, if $p\in G_q$, then $G_p=G_q$. But it seems that transitivity is not needed.

\subsection{...for a poset}
\label{subsec:StrongPrikryForPoset}

In practice, the direct extension order will only be defined on a poset $\P$. I will formulate a version of the strong \Prikry{} property that will ensure that $G$ is uniformly represented over $\P$, i.e., there is a uniform representation that works for all maximal antichains $A\sub\P$. Since $\P$ is dense in its complete Boolean algebra, every maximal antichain in $\B$ has a refinement in $\P$, and so, the direct limits of the systems $\kla{M_A\st A\sub\B}$ and $\kla{M_A\st A\sub\P}$ both are $\check{\V}_U$. Since the strong \Prikry{} property for a poset won't say much about antichains in its Boolean algebra, the corresponding intersection model will then be $\bigcap_{A\sub\P}M_A$.

\begin{defn}
\label{defn:StrongPrikryProperty-poset}
Let $\P=\kla{\P,\le}$ be a notion of forcing. Let $\le_1\sub\le$ be reflexive. Then $\kla{\P,\le,\le_1}$ has the \emph{\Prikry{} property (as a poset)} if for every statement $\phi$ in the forcing language of $\P$ and for every condition $p\in\P$, there is a $q\le_1 p$ such that $q||\phi$, that is, $q$ decides $\phi$, which means that either $q\forces\phi$ or $q\forces\neg\phi$.

Suppose $\P$ is separative, and view it as a subordering of its Boolean completion, $\B$. $\kla{\P,\le,\le_1}$ has the \emph{strong \Prikry{} property (as a poset)} if it has the \Prikry{} property, and if the following hold true:
\begin{enumerate}
  \item
  \label{item:Directedness-poset}
  It is \emph{directed:} if we let $U_p=\{q\st q\le_1 p\}$, for any $p\in\P$, then $U_p$ is directed with respect to $\le$. I.e., for any $q,r\le_1p$, there is an $s\le_1p$ with $s\le q,r$.
  \item
  \label{item:Connectedness-poset}
  It is \emph{connected:} if $q\le_1 p$ and $q\le q'\le p$, then $q'\le_1 p$.

  Connectedness allows us to make the following notation convention.
  For $b\in\B$ and $p\in\P$, define
\[b\le_1p\iff b\le p\ \text{and} \exists q\in\P\quad q\le_1 p\ \text{and}\ q\le b.\]
  There is no conflict if $b\in\P$, by connectedness.

  \item
  \label{item:CapturingA-poset}
  \emph{Maximal antichains are captured:}
  If $A$ is a maximal antichain in $\P$ and $f:A\To\B$ is a pressing down function, then the following are equivalent:
  \begin{enumerate}
    \item $\bigvee\{f(a)\st a\in A\}\le_1\eins$
    \item $\bigvee\{a\in A\st f(a)\le_1 a\}\le_1\eins$
  \end{enumerate}
\end{enumerate}
I will also say that $\P$ has the strong \Prikry{} property with respect to $\le_1$, if $\kla{\P,\le,\le_1}$ has the strong \Prikry{} property.

Let's say that $\kla{\P,\le}$ has the strong \Prikry{} property if there is a subordering $\le_1$ of $\le$ such that $\kla{\P,\le,\le_1}$ has the strong \Prikry{} property.
\end{defn}

As before, we get:

\begin{lem}
\label{lem:DirectedPrikryPropertyGivesUltrafilters-poset}
If $\P$ is a separative notion of forcing, $\kla{\P,\le,\le_1}$ satisfies the ``directed'' \Prikry{} property (i.e., the \Prikry{} property and condition \ref{item:Directedness-poset} of Definition \ref{defn:StrongPrikryProperty-poset}), $\B$ is the Boolean algebra of $\P$, construed so that $\P\sub\B$, and $p\in\P$ is a condition, then the set
\[ G_p=\{b\in\B\st\exists q\le_1 p\quad q\le b\}\]
is an ultrafilter on $\B$ containing $p$.
\end{lem}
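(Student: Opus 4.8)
The plan is to mirror the proof of Lemma~\ref{lem:DirectedPrikryPropertyGivesUltrafiltersCBA}, adapting it to the poset setting where $\le_1$ lives on $\P$ but $G_p$ is declared a subset of the full Boolean algebra $\B$. First I would check that $G_p$ is a filter on $\B$. Upward closure is immediate from the definition: if $b\in G_p$, witnessed by some $q\le_1 p$ with $q\le b$, and $b\le b'$ in $\B$, then the same $q$ witnesses $b'\in G_p$. For downward directedness of $G_p$ (closure under meets), suppose $b,c\in G_p$, witnessed by $q,r\le_1 p$ with $q\le b$ and $r\le c$. By the directedness condition \ref{item:Directedness-poset}, there is an $s\le_1 p$ with $s\le q,r$; since $\P\sub\B$ and $s\le q\le b$, $s\le r\le c$ in $\B$, we get $s\le b\wedge c$ in $\B$, and $s$ itself lies in $G_p$ (as $s\le_1 p$ and $s\le s$), so $b\wedge c\in G_p$. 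That $p\in G_p$ follows from reflexivity of $\le_1$, taking $q=p$. Also $0\notin G_p$: any witness $q\le_1 p$ is a condition in $\P$, hence $q\neq 0$ and $q\le 0$ is impossible in $\B$. Thus $G_p$ is a (proper) filter on $\B$ containing $p$.

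The remaining point is that $G_p$ is an \emph{ultra}filter, and this is exactly where the \Prikry{} property is used, just as in Lemma~\ref{lem:DirectedPrikryPropertyGivesUltrafiltersCBA}. Given any $b\in\B$, consider the statement $\phi\mdf(\check b\in\dot G)$ in the forcing language of $\P$ (equivalently of $\B$), so that $\BV{\phi}=b$ and $\BV{\neg\phi}=-b$. By the \Prikry{} property as a poset, there is a $q\le_1 p$ deciding $\phi$, i.e.\ $q\le b$ or $q\le -b$ (working in $\B$, where $q$ sits as an element). In the first case $b\in G_p$; in the second $-b\in G_p$. Hence $G_p$ decides every element of $\B$, so it is an ultrafilter.

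I expect no real obstacle here: the argument is the verbatim poset analogue of Lemma~\ref{lem:DirectedPrikryPropertyGivesUltrafiltersCBA}. The only mild subtlety to flag is the type mismatch between $\le_1$ (defined on $\P$) and the ambient order of $\B$: the witnesses $q$ in the definition of $G_p$ are poset conditions, but the inequalities $q\le b$ are interpreted in $\B$ via the dense embedding $\P\sub\B$, and one must note that such a $q$, being a nonzero element of $\P\sub\B$, also witnesses its own membership in $G_p$ — this is what makes the downward directedness step go through. Separativity of $\P$ is what guarantees the embedding $\P\sub\B$ is well-behaved (order-preserving and with $p\le q$ in $\P$ iff $p\le q$ in $\B$), so no information is lost in passing between the two orders.
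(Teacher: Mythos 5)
Your proof is correct and is essentially the argument the paper intends: the poset version is stated with ``As before, we get:'' and relies on the same reasoning as Lemma \ref{lem:DirectedPrikryPropertyGivesUltrafiltersCBA} (reflexivity gives $p\in G_p$, directedness of $\le_1$ gives closure under meets, upward closure is immediate, and the \Prikry{} property applied to $\check{b}\in\dot{G}$ yields $q\le b$ or $q\le -b$, so $G_p$ is an ultrafilter). Your extra remarks on $0\notin G_p$ and on interpreting the witnesses $q\in\P$ inside $\B$ via the dense embedding are exactly the right bookkeeping and introduce no deviation from the paper's route.
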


\begin{thm}
\label{thm:StrongPrikryImpliesG_AinM_A-poset}
Let $\P$ be a forcing, and let $\le_1$ be such that $\kla{\P,\le,\le_1}$ satisfies the strong \Prikry{} property with respect to the maximal antichain $A\sub\P$ (i.e., part \ref{item:CapturingA-poset} of Definition \ref{defn:StrongPrikryProperty-poset} holds for $A$). For $p\in\P$, let $G_p$ be defined as in Lemma \ref{lem:DirectedPrikryPropertyGivesUltrafilters-poset}. Then $G_A\in M_A$, where everything is defined in terms of $U=G_{\eins}$. In fact, $G_A=[\seq{G_p}{p\in A}]_{U_A}$.
\end{thm}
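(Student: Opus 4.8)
The plan is to reduce Theorem~\ref{thm:StrongPrikryImpliesG_AinM_A-poset} to Lemma~\ref{lem:StrongPrikryImpliesG_AinM_A-CBA} by checking that, under the hypotheses, the relevant instance of capturing transfers from the poset $\P$ to its Boolean completion $\B$. Concretely, I would first verify that the ``directed'' \Prikry{} property for $\kla{\P,\le,\le_1}$ yields the ``directed'' \Prikry{} property for $\kla{\B,\le,\le_1^\B}$, where $\le_1^\B$ is the extension of $\le_1$ to $\B$ given by the notational convention in part~\ref{item:Connectedness-poset} of Definition~\ref{defn:StrongPrikryProperty-poset}, namely $b\le_1^\B b'$ iff $b\le b'$ and there is $q\in\P$ with $q\le_1 b'$ and $q\le b$. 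One checks reflexivity, that $\le_1^\B$ refines $\le\rest\B^+$, the \Prikry{} property for $\B^+$ (every statement in the forcing language of $\B^+$ is already a statement of the forcing language of $\P$, since $\P$ is dense in $\B$, so a $\le_1$-extension in $\P$ deciding it is in particular a $\le_1^\B$-extension in $\B$), and directedness (using that common $\le$-lower bounds of $q,r\le_1 p$ found inside $\P$ witness directedness in $\B$ as well). Then by Lemma~\ref{lem:DirectedPrikryPropertyGivesUltrafilters-poset} the sets $G_p$ for $p\in\P$ are ultrafilters on $\B$ containing $p$, and one sees directly from the definitions that $G_p^\B$ (defined from $\le_1^\B$ as in Lemma~\ref{lem:DirectedPrikryPropertyGivesUltrafiltersCBA}) equals $G_p$, and similarly $G^\B_b$ makes sense for all $b\in\B$ with $U=G_\eins=G_\eins^\B$.

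The second and main step is to show that capturing for the maximal antichain $A\sub\P$ in the sense of part~\ref{item:CapturingA-poset} implies capturing for $A$ in the cBa sense of part~\ref{item:CapturingACBA}: the two statements are literally the same --- both assert, for every pressing down function $f:A\To\B$, the equivalence of $\bigvee\{f(a)\st a\in A\}\le_1\eins$ and $\bigvee\{a\in A\st f(a)\le_1 a\}\le_1\eins$ --- once one notes that ``$\le_1$'' appearing there is to be read as $\le_1^\B$ on both sides, and that this reading is exactly the content of the notational convention. So no work is needed beyond unwinding definitions, provided one has been careful in the first step that the two extension procedures agree. With capturing for $A$ in hand at the level of $\B$, I would invoke Lemma~\ref{lem:StrongPrikryImpliesG_AinM_A-CBA} applied to $\kla{\B,\le,\le_1^\B}$ and to the maximal antichain $A$ (which, being a maximal antichain in $\P$ and $\P$ dense in $\B$, is a maximal antichain in $\B$), obtaining $G_A\in M_A$ and, more precisely, $G_A=[\seq{G_a}{a\in A}]_{U_A}$, with $U_A$ computed from $U=G_\eins$.

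Finally I would observe that the $M_A$, $U_A$, $\pi_{A,\infty}$ and $G_A$ occurring in the statement --- which are defined ``in terms of $U=G_\eins$'' --- are the same objects whether one regards $A$ as an antichain in $\P$ or in $\B$, since $U_A$ depends only on $A$ and $U$, and the direct limit $\check{\V}_U$ and hence $G_A=\pi_{A,\infty}^{-1}``G$ depend only on $U$ (the poset and its completion give the same Boolean ultrapower). Thus the conclusion of Lemma~\ref{lem:StrongPrikryImpliesG_AinM_A-CBA} is verbatim the conclusion sought here.

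The step I expect to be the only real obstacle is the bookkeeping in the first paragraph: making sure that the extension $\le_1^\B$ of the direct extension order to $\B$ behaves well enough that the cBa-version of the ``directed'' \Prikry{} property genuinely holds, and in particular that $G_b^\B$ (for $b\in\B$, not just $b\in\P$) is an ultrafilter and that the two definitions of $G_p$ for $p\in\P$ coincide; everything after that is a matter of recognizing that the poset and cBa formulations of capturing were deliberately set up to be the same statement.
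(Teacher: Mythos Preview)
Your reduction to Lemma~\ref{lem:StrongPrikryImpliesG_AinM_A-CBA} has a genuine gap, and it is exactly the one you flag at the end: the notational convention in Definition~\ref{defn:StrongPrikryProperty-poset} only defines $b\le_1 p$ for $b\in\B$ and $p\in\P$, not for arbitrary $b'\in\B$ on the right. Your formula ``$b\le_1^\B b'$ iff $b\le b'$ and there is $q\in\P$ with $q\le_1 b'$ and $q\le b$'' is therefore undefined when $b'\notin\P$, since $q\le_1 b'$ has no meaning. Consequently you cannot verify reflexivity, the \Prikry{} property, or directedness of $\le_1^\B$ at elements of $\B^+\setminus\P$, and so the hypotheses of Lemma~\ref{lem:StrongPrikryImpliesG_AinM_A-CBA} are not available. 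There is no obvious canonical way to complete $\le_1^\B$ to a relation on all of $\B^+$ satisfying the cBa strong \Prikry{} property; choosing $G_b$ for $b\notin\P$ by picking some $p\le b$ in $\P$ depends on the choice of $p$.

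The paper's proof avoids this entirely by not attempting to lift to the cBa framework. It verifies condition~2 of Lemma~\ref{lem:CharacterizationOfRepresentingG_A} directly: each $G_a$ (for $a\in A\sub\P$) is an ultrafilter by Lemma~\ref{lem:DirectedPrikryPropertyGivesUltrafilters-poset}, and then for a pressing down $f:A\to\B$ one observes that $f(a)\in G_a\iff f(a)\le_1 a$ (using $a\in\P$ and $f(a)\le a$) and $b\in U=G_\eins\iff b\le_1\eins$ (using $\eins\in\P$), so the required equivalence $(*)$ is literally the capturing condition~\ref{item:CapturingA-poset}. The point is that every occurrence of $\le_1$ in this argument has its right-hand side in $\P$, so the partial extension given by the convention suffices. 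Your route can be salvaged by noting that the \emph{proof} of Lemma~\ref{lem:StrongPrikryImpliesG_AinM_A-CBA} only ever uses $\le_1$ with right-hand side in $A\cup\{\eins\}$ --- but once you make that observation you have reproduced the paper's direct argument rather than invoked the lemma as a black box.
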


\begin{proof} Let $\B$ be the Boolean algebra of $\P$, construed so that $\P\sub\B$, $\le_P\sub\le_\B$. I will show that 2.~of Lemma \ref{lem:CharacterizationOfRepresentingG_A} is satisfied.
By Lemma \ref{lem:DirectedPrikryPropertyGivesUltrafilters-poset}, every $G_a$ is an ultrafilter on $\B$, so the first part of 2.~is clear. For the second part, let $f:A\To\B$ be a pressing down function. It has to be shown that
\claim{$(*)$}{$\bigvee\{f(a)\st a\in A\}\in U\iff\bigvee\{a\in A\st f(a)\in G_a\}\in U$}

But for $a\in A$, by definition, $f(a)\in G_a$ iff $f(a)\le_1 a$ (since $f(a)\le a$). And since  $U=G_\eins$, $b\in U$ iff $b\le_1\eins$. So $(*)$ can be expressed equivalently by saying that condition \ref{item:CapturingA-poset} of Definition \ref{defn:StrongPrikryProperty-poset} holds, which was assumed. \end{proof}

\begin{defn}
\label{defn:UniformRepresentation-poset}
Let $U$ be an ultrafilter on $\B$, the complete Boolean algebra of the separative poset $\P$, viewed as a suborder of $\B$. A set $x\sub\check{\V}_U$ is \emph{uniformly represented} (wrt.~$U$, over $\P$) if there is a function $\vx=\seq{x_p}{p\in\P}$ such that for every maximal antichain $A\sub\P$, $\pi_{A,\infty}^{-1}``x=[\vx\rest A]_{U_A}$, where $\pi_{A,\infty}:\V^A/U_A\To\check{\V}_U$ is the canonical embedding. In this case, I call $\vx$ a \emph{uniform representation of $x$} (wrt.~$U$).
\end{defn}

\begin{thm}
\label{thm:StrongPrikryImpliesGisUniformlyRepresented-poset}
If $\P$ is a separative forcing and $\le_1$ is such that $\kla{\P,\le,\le_1}$ satisfies the strong \Prikry{} property, then
$G_A\in M_A$, for every maximal antichain $A\sub\P$, where everything is defined in terms of $U=G_{\eins}$. Moreover, letting $G_p$ be defined as in Lemma \ref{lem:DirectedPrikryPropertyGivesUltrafilters-poset}, and letting $\vec{G}=\seq{G_p}{p\in\P}$, it follows that $\vG$ is a uniform representation of $G$ over $\P$, that is, for every maximal antichain $A\sub\P$, $G_A=[\vec{G}\rest A]_{U_A}$.
\end{thm}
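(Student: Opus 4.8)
The plan is to reduce Theorem~\ref{thm:StrongPrikryImpliesGisUniformlyRepresented-poset} to the already-established Theorem~\ref{thm:StrongPrikryImpliesG_AinM_A-poset} by observing that the hypothesis of that theorem --- namely part~\ref{item:CapturingA-poset} of Definition~\ref{defn:StrongPrikryProperty-poset} --- is assumed to hold for \emph{every} maximal antichain $A\sub\P$ once $\kla{\P,\le,\le_1}$ has the full strong \Prikry{} property. So the core work has actually been done; this theorem is the ``global'' packaging of the ``local'' statement. Concretely, I would first invoke Lemma~\ref{lem:DirectedPrikryPropertyGivesUltrafilters-poset} (which applies since the directed \Prikry{} property is part of the strong \Prikry{} property) to conclude that $U=G_\eins$ is a genuine ultrafilter on $\B$ containing $\eins$, so that the Boolean ultrapower $\check\V_U$, the embedding $j$, the models $M_A=\Ult(\V,U_A)$ and the embeddings $\pi_{A,\infty}$ are all well-defined, and $G=[\dot G]_U$ makes sense. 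Then, fixing an arbitrary maximal antichain $A\sub\P$, Theorem~\ref{thm:StrongPrikryImpliesG_AinM_A-poset} applied to this $A$ gives both $G_A\in M_A$ and the identity $G_A=[\seq{G_p}{p\in A}]_{U_A}$.

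The second step is to match this up with the definition of a uniform representation over $\P$ (Definition~\ref{defn:UniformRepresentation-poset}). Set $\vG=\seq{G_p}{p\in\P}$. For each maximal antichain $A\sub\P$, the restriction $\vG\rest A$ is exactly $\seq{G_p}{p\in A}$, so the equation from Theorem~\ref{thm:StrongPrikryImpliesG_AinM_A-poset} reads $G_A=[\vG\rest A]_{U_A}$. It remains only to note that $G_A$, in the notation of Definition~\ref{defn:UniformRepresentation-poset}, is precisely $\pi_{A,\infty}^{-1}``G$: this is the standing convention from Definition~\ref{defn:UniformRepresentationCBA}/Definition~\ref{defn:UniformRepresentation-poset} that $x_A=\pi_{A,\infty}^{-1}``x$ for $x\sub\check\V_U$, applied with $x=G$. (One should double-check that the object called $G_A$ in Lemma~\ref{lem:CharacterizationOfRepresentingG_A} and Theorem~\ref{thm:StrongPrikryImpliesG_AinM_A-poset} is the same as $\pi_{A,\infty}^{-1}``G$; this is the definitional identification $G_A=\pi_{A,\infty}^{-1}``[\dot G]_U$ underlying the whole setup, and is where a line of care is warranted.) Hence $\vG$ witnesses that $G$ is uniformly represented over $\P$ with respect to $U$, which is exactly the assertion of the theorem.

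The only genuine subtlety --- and it is bookkeeping rather than a real obstacle --- is the coherence between the poset picture and the Boolean-algebra picture: one is working with maximal antichains $A\sub\P$, pressing-down functions $f:A\To\B$ taking values in the completion, and the convention $b\le_1 p$ for $b\in\B$, $p\in\P$ introduced via connectedness in Definition~\ref{defn:StrongPrikryProperty-poset}. I would make explicit that $\P$ is separative and dense in $\B$, that $\le_\P\sub\le_\B$, and that the ultrafilters $G_p$ produced by Lemma~\ref{lem:DirectedPrikryPropertyGivesUltrafilters-poset} live on $\B$, so that the quotient $\Ult(\V,U_A)$ --- formed exactly as for an antichain in $\B$, since $A$ is still a maximal antichain in $\B$ --- and the canonical map $\pi_{A,\infty}$ into $\check\V_U$ are the same objects referenced in Theorem~\ref{thm:StrongPrikryImpliesG_AinM_A-poset}. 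Once this is spelled out, the proof is a two-line deduction: apply Theorem~\ref{thm:StrongPrikryImpliesG_AinM_A-poset} to each $A$ and read off Definition~\ref{defn:UniformRepresentation-poset}. I expect the write-up to be correspondingly short.
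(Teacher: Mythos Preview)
Your proposal is correct and matches the paper's treatment: the paper states this theorem without proof, evidently regarding it as the immediate ``global'' consequence of Theorem~\ref{thm:StrongPrikryImpliesG_AinM_A-poset} applied to every maximal antichain $A\sub\P$, together with Definition~\ref{defn:UniformRepresentation-poset}. Your write-up plan---invoke Lemma~\ref{lem:DirectedPrikryPropertyGivesUltrafilters-poset} for $U=G_\eins$, then apply Theorem~\ref{thm:StrongPrikryImpliesG_AinM_A-poset} at each $A$---is exactly the intended deduction, and the bookkeeping concerns you flag (the identification $G_A=\pi_{A,\infty}^{-1}``G$ and the coherence between $\P$ and $\B$) are the only points needing a sentence of care.
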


There is a partial converse to the previous theorem.

\begin{lem}
\label{lem:IfG_AinM_AUniformlyThenStrongPrikryHolds-poset}
Let $\B$ be the complete Boolean algebra of $\P$, a separative notion of forcing with a maximal element $\eins$, construed so that $\P\sub\B$. Let $U$ be an ultrafilter on $\B$, and assume there is a system $\vec{G}=\seq{G_p}{p\in\P}$ of ultrafilters on $\B$ with $\P\cap G_p$ dense in $G_p$ (i.e., $G_p$ is generated by $\P\cap G_p$), such that $\vec{G}$ is a uniform representation of $G$ over $\P$ wrt.~$U$.
Then $\P$ has the strong \Prikry{} property with respect to the direct extension ordering on $\P$ defined by setting $p\le_1q$ iff $p\le q$ and $p\in G_q$. Moreover, $U$ is the canonical ultrafilter coming from that direct extension ordering, i.e., $U=\{b\in\B\st\exists p\le_1\eins\quad p\le b\}$.
\end{lem}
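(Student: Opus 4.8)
The plan is to mirror the argument already carried out for complete Boolean algebras in Theorem~\ref{thm:CharacterizationOfStrongPrikryForCBA}, adapting it to the poset setting and taking advantage of the extra hypothesis that each $G_p$ is generated by $\P\cap G_p$. So I would begin by defining $\le_1$ on $\P$ exactly as in the statement: $p\le_1 q \iff p\le q$ and $p\in G_q$. This is clearly a subordering of $\le$, and it is reflexive provided each $G_q$ contains $q$. To see that $q\in G_q$, note that by hypothesis $\vec G$ is a uniform representation of $G$, and considering the trivial maximal antichain $A=\{\eins\}$ gives $\pi_{A,\infty}^{-1}``G = [\langle\eins,G_\eins\rangle]_{U_A} = G_\eins$ while also $\pi_{A,\infty}=j$ and $\pi_{A,\infty}^{-1}``G = j^{-1}``G = U$, so $G_\eins = U$. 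For general $q$, applying \Los{} inside $M_A$ (as in the cited theorem) to the true statement ``$[\vec G\rest A]_{U_A}$ is an ultrafilter on $[\const_\B]_{U_A}$ containing $[\id]_{U_A}$'' shows $\{a\in A: a\in G_a\}\in U_A$ for every maximal antichain $A$; picking $A$ refining $\{q\}$ (and $q\in A$) yields $q\in G_q$. Hence $\le_1$ is reflexive, and simultaneously this establishes $U=G_\eins$, which is the ``moreover'' clause.

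Next I would verify the \Prikry{} property and conditions \ref{item:Directedness-poset}--\ref{item:CapturingA-poset} of Definition~\ref{defn:StrongPrikryProperty-poset}. The \Prikry{} property: given $\phi$ and $p\in\P$, one of $\BV{\phi}$, $\BV{\neg\phi}$ lies in $G_p$; since $\P\cap G_p$ is dense in $G_p$, there is $r\in\P\cap G_p$ below that Boolean value, and since $G_p$ is a filter containing $p$, there is $\bar p\in\P\cap G_p$ with $\bar p\le p$ and $\bar p\le r$, so $\bar p\le_1 p$ and $\bar p$ decides $\phi$. Directedness: if $q,r\le_1 p$ then $q,r\in G_p$, so some $s\in\P\cap G_p$ has $s\le q,r$; and $s\le p$ since $q\le p$, so $s\le_1 p$. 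Connectedness: if $q\le_1 p$ and $q\le q'\le p$, then $q\in G_p$ forces $q'\in G_p$ by upward closure, so $q'\le_1 p$. For condition~\ref{item:CapturingA-poset}, I would argue exactly as at the end of the proof of Theorem~\ref{thm:CharacterizationOfStrongPrikryForCBA}: for a maximal antichain $A\sub\P$ and a pressing down function $f:A\To\B$, the fact that $[\vec G\rest A]_{U_A}=G_A$ lets us invoke Lemma~\ref{lem:CharacterizationOfRepresentingG_A}(2), giving $\bigvee\{f(a):a\in A\}\in U \iff \bigvee\{a\in A: f(a)\in G_a\}\in U$; then use $U=G_\eins$ (so $b\in U \iff b\le_1\eins$) and the equivalence $f(a)\in G_a \iff f(a)\le_1 a$ (valid because $f(a)\le a$, using the notational convention for $b\le_1 p$ with $b\in\B$) to rewrite this as the required equivalence of \ref{item:CapturingA-poset}(a) and \ref{item:CapturingA-poset}(b).

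The one genuinely delicate point, and what I expect to be the main obstacle, is the bookkeeping around extending $\le_1$ from $\P$ to Boolean elements $b\in\B$ via the convention in condition~\ref{item:Connectedness-poset} of the definition, and checking that the equivalence $f(a)\in G_a \iff f(a)\le_1 a$ really holds for an arbitrary $b=f(a)\in\B$ with $b\le a$ — this is where the density hypothesis ``$\P\cap G_p$ dense in $G_p$'' is doing essential work, since without it one could not pass between membership of a Boolean element in $G_a$ and the existence of a $\P$-condition $q\le_1 a$ with $q\le b$. Concretely: if $b\in G_a$ and $b\le a$, density gives $q\in\P\cap G_a$ with $q\le b$, and then $q\le a$ plus $q\in G_a$ gives $q\le_1 a$, hence $b\le_1 a$ by the convention; conversely $b\le_1 a$ unwinds to exactly such a $q$, and $q\le b$ with $q\in G_a$ forces $b\in G_a$ by upward closure. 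Once this lemma-internal equivalence is pinned down, the rest is the routine translation sketched above, and I would also remark (as the paper does in the cBa case) that $\le_1$ need not be transitive, so one must be careful to use only the stated conditions and not accidentally invoke transitivity.
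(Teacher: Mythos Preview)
Your overall strategy mirrors the paper's proof closely, and the verification of the \Prikry{} property, directedness, connectedness, condition~\ref{item:CapturingA-poset}, and the ``moreover'' clause are all correct and essentially identical to the paper's argument (including your correct observation that the density hypothesis on $G_p$ is exactly what makes the equivalence $f(a)\in G_a\iff f(a)\le_1 a$ go through for $f(a)\in\B$).

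There is, however, a genuine gap in your argument for reflexivity of $\le_1$, i.e., for $q\in G_q$. You correctly derive $U=G_{\eins}$ from the trivial antichain $\{\eins\}$, and you correctly observe via \Los{} that $\{a\in A:a\in G_a\}\in U_A$ for every maximal antichain $A\sub\P$. But the inference ``picking $A$ with $q\in A$ yields $q\in G_q$'' is a non sequitur: knowing that a set is $U_A$-large tells you nothing about whether any \emph{particular} element $q\in A$ belongs to it. The paper does not try to prove $q\in G_q$ for the given $\vec G$; instead it modifies $\vec G$ to a new system $\vec{G}'$ by redefining $G'_p$ (to be some ultrafilter on $\B$ containing $p$, with $\P\cap G'_p$ dense in $G'_p$) at the bad $p$'s where $p\notin G_p$. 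Since the set of such bad $a$'s in any maximal antichain $A$ is $U_A$-null, $\vec{G}'$ is still a uniform representation of $G$, and now reflexivity is immediate. Once you make this modification at the outset, the remainder of your proof goes through verbatim.
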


\begin{proof}
As in the proof of Theorem \ref{thm:CharacterizationOfStrongPrikryForCBA}, we may assume that for all $p\in\P$, $p\in G_p$ and it follows that $U=G_\eins$, by considering the maximal antichain $\{\eins\}$.

Employing the ordering $\le_1$ as defined above, it then follows that $\le_1$ is reflexive, and every statement of the forcing language is decided by a direct extension of any $p$, as before. So the \Prikry{} property holds. For the strong \Prikry{} property, the directedness 1.~follows because the set of $\le_1$ extensions of a given condition generates an ultrafilter. Condition 2.~follows because $G_p$ is upwards closed.

As in Definition \ref{defn:StrongPrikryProperty-poset}, if $b\in\B$ and $p\in\P$, write $b\le_1 p$ iff $b\le p$ and there is a $q\in\P$ with $q\le_1 p$ such that $q\le b$. It follows that $b\le_1 p$ iff $b\le p$ and $b\in G_p$. The direction from left to right is clear, and the converse follows because $\P\cap G_p$ is dense in $G_p$: if $b\le p$ and $b\in G_p$, then there is a $q\in\P\cap G_p$ with $q\le b$, which shows that $b\le_1 p$.

Now condition 3.~holds because, for any $A$, since $[\vec{G}\rest A]_{U_A}=G_A$, we know by the considerations from the beginning of the present section that for any pressing down function $f$ on $A$,
\[\bigvee\{f(a)\st a\in A\}\in U\iff\bigvee\{a\in A\st f(a)\in G_a\}\in U.\]
But $U=G_\eins$, so the left hand side here is equivalent to saying that $\bigvee\{f(a)\st a\in A\}\le_1\eins$, and the right hand side is equivalent to saying that $\bigvee\{a\in A\st f(a)\le_1a\}$, since $f(a)\le a$ and thus, $f(a)\in G_a$ iff $f(a)\le_1 a$.

Lastly, it has to be shown that $U=\{b\in\B\st\exists p\le_1\eins\quad p\le b\}$. Note that since $G_\eins=U$, it follows by definition of $\le_1$ that for $p\in\P$, $p\le_1\eins$ iff $p\in U$. But we assumed that $U=G_\eins$ is generated by $\P\cap G_\eins$. This implies the desired equality.
\end{proof}

The next goal is to make it easier to show that a separative poset has the strong \Prikry{} property, by eliminating references to its complete Boolean algebra. First, I want to express the strong \Prikry{} property for a poset without using joins. The following lemma serves this purpose.

\begin{lem}
\label{lem:BA-freeStrongPrikryProperty}
Let $\P$ be a separative notion of forcing, viewed as a suborder of its Boolean algebra $\B$. Let $\le_1\sub(\le\cap(\B\times\P))$. Let $A$ be a maximal antichain in $\P$, $r\in\P$, and let $f:A\To\B$ be a pressing down function.
\begin{enumerate}
\item The following conditions are equivalent:
    \begin{enumerate}
      \item $r\le\bigvee\{f(a)\st a\in A\}$
      \item $\forall a\in A\quad a\land r=f(a)\land r$
    \end{enumerate}
    Further, these conditions are equivalent to
    \begin{enumerate}
    \item[(c)] $\forall a\in A\forall p\in\P\quad(p\le r\implies (p\le a \iff p\le f(a)))$.
    \end{enumerate}
\item The following are equivalent:
  \begin{enumerate}
    \item $r\le\bigvee\{a\in A\st f(a)\le_1 a\}$
    \item $\forall a\in A\quad(a||r\implies f(a)\le_1 a)$
  \end{enumerate}
\end{enumerate}
\end{lem}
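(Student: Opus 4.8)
The plan is to prove each of the two equivalences by chasing the definitions of join, meet, and separativity in the Boolean algebra $\B$, keeping in mind that $\P$ is dense in $\B$. I will treat part (1) first. For the equivalence of (a) and (b): the join $s=\bigvee\{f(a)\st a\in A\}$ is characterized by the fact that for every $p\in\P$, $p\le s$ iff $p$ is incompatible with $-s$, iff $p$ is compatible with some $f(a)$ (using that $A$ is a maximal antichain, so the $a$'s — hence, modulo the pressing-down relation, the region below $\bigvee A=\eins$ — exhaust $\B$; more carefully, since $f(a)\le a$ and $A$ is a maximal antichain, any $p\in\P$ compatible with $s$ is compatible with exactly the one $a\in A$ it is compatible with, and then $p\land s = p\land f(a)$). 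So $r\le s$ means $r\land s = r$, i.e.\ $\bigvee_a (r\land f(a)) = r$; since the $r\land f(a)$ are pairwise incompatible and $\bigvee_a (r\land a)=r$ (as $A$ is maximal), and $r\land f(a)\le r\land a$ for each $a$, the condition $\bigvee_a(r\land f(a))=r$ is equivalent to $r\land f(a) = r\land a$ for every $a\in A$. That is exactly (b). For (c): this is just the separative, $\P$-internal rendering of (b). Note $r\land a = r\land f(a)$ holds iff they have the same dense set of $\P$-conditions below them, iff for all $p\in\P$ with $p\le r$ we have $p\le a \iff p\le f(a)$ (using $f(a)\le a$, the nontrivial direction being $p\le a\implies p\le f(a)$, and separativity to move from ``$p$ compatible with $r\land f(a)$'' to ``$p\le f(a)$'' once $p\le r$); so (b) $\iff$ (c). I would write this out as the chain (a) $\iff$ (b) $\iff$ (c).

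For part (2): let $t = \bigvee\{a\in A\st f(a)\le_1 a\}$, a join over the subset $A' = \{a\in A\st f(a)\le_1 a\}$ of the maximal antichain $A$. Since the elements of $A$ are pairwise incompatible and $A$ is maximal, for $r\in\P$ we have $r\le t$ iff $r$ is incompatible with every $a\in A\setminus A'$, iff (again by maximality of $A$) $r$ is compatible only with elements of $A'$; equivalently, for every $a\in A$, if $a\,||\,r$ then $a\in A'$, i.e.\ $f(a)\le_1 a$. That is precisely the statement (b) of part (2). The only point requiring a line of care is that ``$r\le\bigvee A'$'' really does amount to ``$r$ meets no element of $A\setminus A'$'': this holds because $A$ being a \emph{maximal} antichain gives $\bigvee A = \eins$, hence $\bigvee(A\setminus A') = -\bigvee A' $ relative to... — more precisely, $r\le\bigvee A'$ iff $r\land\bigvee(A\setminus A') = 0$ iff $r\land a = 0$ for all $a\in A\setminus A'$ (the $a\in A\setminus A'$ being pairwise incompatible), and $r\land a = 0$ for $a\in\P$, $r\in\P$ is the same as $a\perp r$ by separativity.

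I do not expect a serious obstacle here; the lemma is a bookkeeping exercise converting Boolean-algebra statements into statements about the dense poset $\P$. The one place to be careful is the handling of the join $\bigvee\{f(a)\st a\in A\}$ when $f$ is merely a pressing-down function (so the $f(a)$ may fail to be an antichain if some $f(a)=0$, and they certainly need not be maximal): the argument must lean on $A$ itself being a maximal antichain with $f(a)\le a$, rather than on any antichain property of the $f(a)$'s, and must allow $f(a)=0$ (in which case no $p\le r$ satisfies $p\le f(a)$, forcing, via (c), that no $p\le r$ satisfies $p\le a$ either, i.e.\ $r\perp a$ — consistent with (b), since $r\land a = r\land 0 = 0 = r\land f(a)$). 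Similarly in part (2), $f(a)\le_1 a$ with $f(a)=0$ is allowed if $0\le_1 a$, but this causes no trouble since we only ever test compatibility of $r$ with the $a$'s, never with the $f(a)$'s.
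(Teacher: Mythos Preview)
Your proposal is correct and follows essentially the same approach as the paper: both arguments reduce (1)(a)$\iff$(b) to the distributive identity $r\land\bigvee_a f(a)=\bigvee_a(r\land f(a))$ together with the antichain property (so that $a\land f(b)=0$ for $a\neq b$), read (1)(c) as the density/separativity translation of (b), and handle (2) by observing that, since $A$ is a maximal antichain, $r\le\bigvee A'$ is equivalent to $r\perp a$ for every $a\in A\setminus A'$. Your remarks about the edge case $f(a)=0$ are sound but not needed for the argument to go through.
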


\begin{proof} 1(a)$\implies$1(b): Let $a\in A$. By 1(a), $r=r\land\bigvee\{f(b)\st b\in A\}$. In particular, \[a\land r=a\land r\land\bigvee\{f(b)\st b\in A\}=\bigvee\{a\land r\land f(b)\st b\in A\}=a\land r\land f(a)=f(a)\land r\]
since $A$ is an antichain and $f$ is pressing down.

1(b)$\implies$1(a): Assume $\forall a\in A\quad a\land r=f(a)\land r$. To show 1(a), I have to verify that $r\land\bigvee\{f(a)\st a\in A\}=r$. This is immediate:
\[r\land\bigvee\{f(a)\st a\in A\}=\bigvee\{r\land f(a)\st a\in A\}=\bigvee\{r\land a\st a\in A\}=r\]
because $A$ is a maximal antichain.

Now, 1(b) is easily seen to be equivalent to 1(c), because the embedding from $\P$ into $\B$ is given by $p\mapsto\{s\st s\le p\}$, and the $\land$ operation corresponds to intersection.

2(a)$\implies$2(b): Let $r$ be as in 2(a), i.e., $r\le\bigvee\{b\in A\st f(b)\le_1 b\}$. Suppose 2(b) fails. Then let $a\in A$ be such that $a||r$ but $f(a)\not\le_1a$. Then $a\land r = a\land r\land \bigvee\{b\in A\st f(b)\le_1 b\}=\bigvee\{a\land r\land b\st f(b)\le_1 b\}=0$, because $a\land r\land b=0$ for all $b\in A, b\neq a$, and $f(a)\not\le_1 a$. So $a\perp r$, a contradiction.

2(b)$\implies$2(a): Let $r$ be as in 2(b). I have to show that $r=r\land\bigvee\{a\in A\st f(a)\le_1 a\}$. This is again immediate:
\begin{ea*}
r&\ge&r\land\bigvee\{a\in A\st f(a)\le_1 a\}\\
&=&\bigvee\{r\land a\st a\in A\ \text{and}\ f(a)\le_1a\}\\
&\ge&\bigvee\{r\land a\st a\in A\ \text{and}\ r\land a\neq 0\}\\
&=&\bigvee\{r\land a\st a\in A\}\\
&=&r
\end{ea*}%
because $A$ is a maximal antichain (otherwise $s=r-\bigvee\{r\land a\st a\in A\}\neq 0$, and so, $A\cup\{s\}$ would still be an antichain).
\end{proof}

Note that condition 1(c) is equivalent to
$\forall a\in A\forall p\in\P\quad(p\le r,a \implies p\le f(a))$
because $f(a)\le a$. As a consequence, if $\P$ is separative, then part \ref{item:CapturingA-poset} of Definition \ref{defn:StrongPrikryProperty-poset} can be expressed equivalently by saying:

\claim{3.'}{If $A$ is a maximal antichain in $\P$ and $f:A\To\B$ is a pressing down function, then the following are equivalent:
  \begin{enumerate}
    \item $\exists r\in\P\quad r\le_1\eins\ \land \forall a\in A\forall p\in\P \quad(p\le a,r \implies p\le f(a))$
    \item $\exists r\in\P\quad r\le_1\eins\ \land\forall a\in A\quad(a||r\implies f(a)\le_1 a)$
  \end{enumerate}}

This formulation has the advantage that it does not require the calculation of joins in the Boolean algebra. But the pressing down function $f$ takes values in $\B$, so there is still a substantial reference to the complete Boolean algebra of $\P$. Note that even though $\le_1$ is a binary relation on $\P$, part (b) of the above formulation refers to the extension of $\le_1$ to $\B$.

I will be able to eliminate the reference to $\B$ if the direct ordering $\le_1$ on $\P$ has an extra property. For future reference, let us introduce some terminology here.

\begin{defn}
\label{def:Natural}
Let $\P$ be a separative poset, viewed as a suborder of its complete Boolean algebra $\B$.

If $A$ is an antichain in $\P$ and $f:A\To\B$ is a pressing down function, then a condition $r$ \emph{captures} $f$ if for all $a\in A$ and all $p\le a,r$, it follows that $p\le f(a)$.

Let $\kla{\P,\le,\le_1}$ satisfy the \Prikry{} property.
$\P$ satisfies the strong \Prikry{} property at a maximal antichain $A$ if (3') holds for every pressing down function $f:A\To\B$.

$\le_1$ is \emph{natural} if whenever $r\le_1\eins$ and $a||r$, then there is a $p\le_1a$ with $p\le r$ (all of these conditions, $a$, $p$ and $r$, are in $\P$).
\end{defn}

It turns out that all forcing notions we will deal with have a notion of direct extension that is natural in the sense of the previous definition. The following lemma will give us one direction of (3') for free, for these forcing notions.

\begin{lem}
\label{lem:UseOfNaturalness}
Suppose $\kla{\P,\le}$ is separative and $\kla{\P,\le,\le_1}$ is natural and connected (i.e., satisfies 2.~of Definition \ref{defn:StrongPrikryProperty-poset}). View $\P$ as a suborder of its Boolean completion. As in Definition \ref{defn:StrongPrikryProperty-poset}, if $b\in\B$ and $p\in\P$, then write $b\le_1p$ to mean that $b\le p$ and there is a $q\le_1p$ with $q\in\P$ such that $q\le b$.

Let $A\sub\P$ be an antichain, $f:A\To\B$ a pressing down function, and let $r\le_1\eins$, $r\in\P$, be such that
$\forall a\in A\forall p\le a,r \quad(p\le f(a))$ (i.e., $r$ captures $f$). Then $\forall a\in A\quad(a||r\implies f(a)\le_1 a)$. So the same $r$ witnesses that 3'(b) holds.
\end{lem}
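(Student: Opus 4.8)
The plan is to fix $a \in A$ with $a \| r$ and show $f(a) \le_1 a$, using the definition of $\le_1$ on $\B \times \P$: I must produce a witness $q \in \P$ with $q \le_1 a$ and $q \le f(a)$, and I must verify $f(a) \le a$ (which is immediate since $f$ is pressing down). So the real content is finding $q$. First I would apply naturalness: since $r \le_1 \eins$ and $a \| r$, there is a $q \in \P$ with $q \le_1 a$ and $q \le r$. This $q$ is my candidate witness; it already satisfies $q \le_1 a$, so it only remains to check $q \le f(a)$.

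The key step is to observe that $q \le a$ (from $q \le_1 a$, since $\le_1 \subseteq \le$) and $q \le r$, so $q \le a, r$; therefore the hypothesis that $r$ captures $f$ applies directly to the pair $a, q$ and yields $q \le f(a)$. Combining $q \le_1 a$ with $q \le f(a)$ and $f(a) \le a$, the definition of the extended ordering gives $f(a) \le_1 a$, as desired. Since $a$ was an arbitrary element of $A$ with $a \| r$, this establishes $\forall a \in A\ (a \| r \implies f(a) \le_1 a)$, i.e., $r$ witnesses 3'(b).

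I do not anticipate a genuine obstacle here; the lemma is essentially an unwinding of definitions, and the only subtlety is keeping straight the two meanings of $\le_1$ (as a relation on $\P$, versus its extension to $\B \times \P$ via the clause in Definition \ref{defn:StrongPrikryProperty-poset}). The one place to be slightly careful is making sure naturalness is invoked with the correct antichain element: naturalness is stated for $a \| r$ with $r \le_1 \eins$, which is exactly our situation, and it outputs a condition $p \le_1 a$ with $p \le r$, which is precisely the $q$ we want. One should also note that connectedness is what makes the notation $b \le_1 p$ unambiguous when $b$ happens to lie in $\P$, which is why it is listed among the hypotheses, though it is not otherwise used in the argument.
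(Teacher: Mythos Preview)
Your proof is correct and follows essentially the same route as the paper: apply naturalness to get $q\le_1 a$ with $q\le r$, use the capturing hypothesis to get $q\le f(a)$, and conclude $f(a)\le_1 a$ via the extended definition of $\le_1$ on $\B\times\P$. Your remark about connectedness is also accurate; the paper likewise notes that the conclusion follows either from connectedness or directly from the definition of $b\le_1 p$ for $b\in\B$.
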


\begin{proof} Let $a\in A$ with $a||r$. Since $\le_1$ is natural, one can choose a condition $p\in\P$ such that $p\le_1 a$ and $p\le r$. By assumption (i.e., that $r$ witnesses 3'(a)), it follows that $p\le f(a)$. So we have $p\le_1 a$ and $p\le f(a)\le a$. It follows by connectedness, 2.~of Definition
\ref{defn:StrongPrikryProperty-poset} (or by the definition of what it means that $b\le_1 a$ for $b\in\B$) that $f(a)\le_1 a$, as desired. \end{proof}

The point of the following theorem is that it does not refer to the complete Boolean algebra of the partial order in question.

\begin{thm}
\label{thm:StrongPrikryForNaturalPosets}
Let $\kla{\P,\le}$ be a separative partial order, and let $\le_1$ be a suborder of $\le$ so that $\kla{\P,\le,\le_1}$ satisfies the \Prikry{} property, is directed, connected (satisfies points \ref{item:Directedness-poset} and \ref{item:Connectedness-poset} of Definition \ref{defn:StrongPrikryProperty-poset}) and natural (see Definition \ref{def:Natural}). Then $\kla{\P,\le,\le_1}$ has the strong \Prikry{} property at a maximal antichain $A\sub\P$ iff for every pressing down function $f:A\To\P$ (taking values in $\P$, not in $\B$!), the following holds:

\claim{$(*)$}{If there is an $r\le_1\eins$ such that $\forall a\in A\quad(a||r\implies f(a)\le_1 a)$, then there is an
$r'\le_1\eins$ that captures $f$, i.e., such that $\forall a\in A\forall p\le r' \quad(p\le a \implies p\le f(a))$.}
\end{thm}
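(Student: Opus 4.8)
The goal is to show, under the standing hypotheses (Prikry property, directedness, connectedness, naturalness), that the strong Prikry property at $A$ — which, by the discussion preceding Definition \ref{def:Natural}, is the statement (3') for all pressing down $f\colon A\To\B$ — is equivalent to the condition $(*)$, stated only for pressing down functions taking values in $\P$. One direction is essentially immediate from Lemma \ref{lem:UseOfNaturalness}: if (3') holds for all $f\colon A\To\B$, then in particular (3') holds for every $f\colon A\To\P\sub\B$. Now suppose such an $f$ and an $r\le_1\eins$ with $\forall a\in A\,(a\|r\implies f(a)\le_1a)$ are given; this is exactly 3'(b), so by (3') there is an $r'\le_1\eins$ witnessing 3'(a), i.e. $\forall a\in A\,\forall p\le a,r'\ (p\le f(a))$ — and that is precisely the statement that $r'$ captures $f$. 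So $(*)$ follows. (Here Lemma \ref{lem:UseOfNaturalness} is not even needed for this direction; it is the converse where naturalness does the work.)

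For the converse, assume $(*)$ holds for all $\P$-valued pressing down functions, and let $f\colon A\To\B$ be an arbitrary pressing down function; we must verify (3'), i.e. 3'(a) $\iff$ 3'(b). The direction 3'(a) $\Rightarrow$ 3'(b) is exactly the content of Lemma \ref{lem:UseOfNaturalness}: if $r\le_1\eins$ captures $f$, then the same $r$ witnesses 3'(b). For 3'(b) $\Rightarrow$ 3'(a): suppose $r\le_1\eins$ satisfies $\forall a\in A\,(a\|r\implies f(a)\le_1 a)$. The idea is to replace the $\B$-valued $f$ by a $\P$-valued $g$ to which $(*)$ applies, and then transfer the captured witness back to $f$. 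Define $g\colon A\To\P$ as follows: for $a\in A$ with $a\|r$, naturalness (applied to $r\le_1\eins$ and $a\|r$) gives some $p_a\le_1 a$ with $p_a\le r$; alternatively, since we already know $f(a)\le_1 a$ in this case, $f(a)\in G_a$ and so by density of $\P\cap G_a$ (which holds because $G_a$ is generated by its $\P$-conditions, cf.\ the setup of Lemma \ref{lem:DirectedPrikryPropertyGivesUltrafilters-poset}) we may pick $g(a)\in\P$ with $g(a)\le_1 a$ and $g(a)\le f(a)$; for $a\perp r$, just set $g(a)=a$ (or any $\P$-condition $\le a$). Then $g$ is pressing down, $g$ takes values in $\P$, and for every $a\in A$ with $a\|r$ we have $g(a)\le_1 a$ — so $r$ witnesses the hypothesis of $(*)$ for $g$. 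Hence $(*)$ yields an $r'\le_1\eins$ capturing $g$: $\forall a\in A\,\forall p\le a,r'\ (p\le g(a))$. Since $g(a)\le f(a)$ for the relevant $a$, we get $p\le f(a)$ for those $a$; for $a\perp r'$ there is no $p\le a,r'$ at all, so the implication is vacuous. Thus $r'$ captures $f$, which is 3'(a).

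The only subtlety — and the step I expect to be the main obstacle — is the transfer in the last step: we must be careful that capturing $g$ really does imply capturing $f$ for \emph{all} $a\in A$, not just those with $a\|r$. For $a$ with $a\perp r'$ this is vacuous as noted; but a priori $r'$ need not refine $r$, so we should worry about $a$ with $a\|r'$ but $a\perp r$, where $g(a)=a$ and capturing $g$ only gives $p\le a$, which says nothing about $f(a)$. To handle this cleanly one shrinks $r$ at the outset: it suffices to prove 3'(b) $\Rightarrow$ 3'(a) under the extra assumption that the witness to 3'(b) is the \emph{same} as some chosen fixed condition, using directedness/connectedness to see that if some $r\le_1\eins$ works then any $r''\le_1 r$ works too, and arrange that the $r'$ produced by $(*)$ satisfies $r'\le r$ — which one can do by replacing $f$ by the function $a\mapsto f(a)\land r$ (still pressing down) and noting that capturing is monotone in this sense, or simply by observing at the level of Lemma \ref{lem:BA-freeStrongPrikryProperty}(2) that the set of $a$ with $a\|r$ only shrinks as $r$ shrinks. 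Alternatively, and most simply: when $a\perp r$ we know $f(a)\land r = 0$, so for $p\le r'$ with also $p\le r$ forced (take $r'\le_1 r$ via directedness, which is legitimate since $r\le_1\eins$), $p\le a$ would force $p\le f(a)\land r = 0$, contradiction, so $p\not\le a$ and the implication in "captures $f$" is again vacuous for such $a$. Making this bookkeeping precise — choosing $r'\le_1 r$ and checking the three cases $a\perp r$, $a\|r$ — is the crux; everything else is unwinding definitions.
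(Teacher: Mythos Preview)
Your proposal is correct and follows essentially the same route as the paper: the forward direction is immediate, and for the converse you replace the $\B$-valued $f$ by a $\P$-valued $g$ (choosing $g(a)\le_1 a$ with $g(a)\le f(a)$ when $a\|r$, and $g(a)=a$ otherwise), apply $(*)$ to $g$ to obtain $r'\le_1\eins$, and then use directedness to pass to a common refinement $\tilde r\le_1\eins$ with $\tilde r\le r,r'$ which captures $f$. The only imprecision is your phrasing ``take $r'\le_1 r$ via directedness'': directedness does not let you arrange $r'\le_1 r$, but rather gives a new $\tilde r\le_1\eins$ with $\tilde r\le r,r'$ --- and that is exactly what the paper does and what your argument actually needs.
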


\begin{proof} The strong \Prikry{} property holds at $A\sub\P$ iff (3') holds for every pressing down function $f:A\To\B$, where $\B$ is the Boolean completion of $\P$, construed so that $\P$ is a suborder of $\B$. But since $\P$ is natural, one direction is vacuous, by lemma \ref{lem:UseOfNaturalness}, and so, the strong \Prikry{} property holds at $A$ iff $(*)$ holds for every pressing down function $f:A\To\B$. So clearly, the strong \Prikry{} property implies that $(*)$ holds for every pressing down function $f:\A\To\P$.

Conversely, assume that $(*)$ holds for every pressing down function taking values in $\P$. Let $f:A\To\B$ be a pressing down function. It has to be shown that $(*)$ holds for $f$. So let $r\le_1\eins$ be such that $\forall a\in A\quad(a||r\implies f(a)\le_1 a)$, where we use the definition of $b\le_1 p$ for $b\in\B$, $p\in\P$, as introduced in Definition \ref{defn:StrongPrikryProperty-poset}.
Define a pressing down function $g:A\To\P$ as follows. For $a\in A$, if $a||r$, then let $g(a)\le_1 a$ be such that $g(a)\le f(a)$ (this is possible because for $a||r$, $f(a)\le_1 a$). If $a\perp r$, then let $g(a)=a$. Now, $g:A\To\P$ is a pressing down function, and $r\le_1\eins$ is such that $\forall a\in A\quad(a||r\implies g(a)\le_1 a)$. So by $(*)$, applied to $g$, there is an $r'\le\eins$ such that
$\forall a\in A\forall p\le r',a \quad(p\le g(a))$.
Note that since $r\le_1\eins$ and $r'\le_1\eins$, it follows by directedness that there is a $\tilde{r}\le r,r'$ such that $\tilde{r}\le_1\eins$. I claim that $\tilde{r}$ captures $f$. To see this, let $p\le a,\tilde{r}$, $a\in A$.
Then $p\le a,r$, so $a||r$, which means by definition of $g$, that $g(a)\le f(a)$. Moreover, $p\le a,r'$, so that, since $r'$ captures $g$, it follows that $p\le g(a)$. So altogether, $p\le g(a)\le f(a)$, i.e., $p\le f(a)$.
\end{proof}

The condition for natural forcing notions having the strong \Prikry{} property given in the previous theorem can still be simplified slightly. To state it succinctly, let us make the following definition.

\begin{defn}
Let $\kla{\P,\le}$ be a poset with a suborder $\le_1$. A function $f:X\To\P$, where $X\sub\P$, is $\le_1$-pressing down if for every $p\in X$, $f(p)\le_1 p$.
\end{defn}

Note that a $\le_1$-pressing down function takes values in $\P$, not in the completion of $\P$.

\begin{thm}
\label{thm:StrongPrikryForNaturalPosets2}
Let $\kla{\P,\le}$ be a separative partial order, and let $\le_1$ be a suborder of $\le$ so that $\kla{\P,\le,\le_1}$ satisfies the \Prikry{} property, is directed, connected and natural. Then $\kla{\P,\le,\le_1}$ has the strong \Prikry{} property at a maximal antichain $A\sub\P$ iff every $\le_1$-pressing down function on $A$ is captured.
\end{thm}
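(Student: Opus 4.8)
The plan is to deduce this from Theorem~\ref{thm:StrongPrikryForNaturalPosets}, which, under the present hypotheses, already says that $\kla{\P,\le,\le_1}$ has the strong \Prikry{} property at $A$ if and only if the implication $(*)$ of that theorem holds for every pressing down function $f\colon A\To\P$. So what has to be shown is that this condition is equivalent to ``every $\le_1$-pressing down function on $A$ is captured,'' i.e., that for every $\le_1$-pressing down $f\colon A\To\P$ there is an $r\le_1\eins$ capturing $f$.

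The key observation is that for a $\le_1$-pressing down function $f\colon A\To\P$ the hypothesis of $(*)$ is automatically met, with $r=\eins$ as a witness: $\eins\le_1\eins$ by reflexivity, every $a\in A$ is compatible with $\eins$, and $f(a)\le_1 a$ holds by assumption. Hence, for such $f$, the statement $(*)$ reduces to its conclusion, namely that some $r'\le_1\eins$ captures $f$. The direction ``$\Rightarrow$'' of the theorem is now immediate: if $\kla{\P,\le,\le_1}$ has the strong \Prikry{} property at $A$, then $(*)$ holds for every pressing down $f\colon A\To\P$, in particular for every $\le_1$-pressing down one, and for those $(*)$ just asserts that $f$ is captured.

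For the direction ``$\Leftarrow$,'' assume that every $\le_1$-pressing down function on $A$ is captured; by Theorem~\ref{thm:StrongPrikryForNaturalPosets} it suffices to verify $(*)$ for an arbitrary pressing down $f\colon A\To\P$. So suppose the hypothesis of $(*)$ holds for $f$, witnessed by $r\le_1\eins$, i.e., $f(a)\le_1 a$ whenever $a||r$. I would then define $g\colon A\To\P$ by $g(a)=f(a)$ if $a||r$ (legitimate, since then $f(a)\le_1 a$) and $g(a)=a$ if $a\perp r$ (using reflexivity of $\le_1$). Then $g$ is $\le_1$-pressing down, so by assumption there is an $r''\le_1\eins$ capturing $g$. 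By directedness (condition~\ref{item:Directedness-poset} of Definition~\ref{defn:StrongPrikryProperty-poset}), choose $\tilde r\le r,r''$ with $\tilde r\le_1\eins$. Then $\tilde r$ captures $f$: if $p\le a,\tilde r$ with $a\in A$, then $p\le a,r$ forces $a||r$, hence $g(a)=f(a)$, while $p\le a,r''$ together with the fact that $r''$ captures $g$ gives $p\le g(a)=f(a)$. Thus $\tilde r\le_1\eins$ captures $f$, so $(*)$ holds for $f$, as required.

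I do not anticipate a real obstacle, since the argument is essentially a rearrangement of the ``$\Leftarrow$'' part of the proof of Theorem~\ref{thm:StrongPrikryForNaturalPosets}; the only genuinely new point is the observation that the antecedent of $(*)$ is vacuous for $\le_1$-pressing down functions. The one place calling for a little care is the definition of the auxiliary function $g$: one must check that it is genuinely $\le_1$-pressing down (so that the hypothesis applies to it) and that $g$ and $f$ agree on the conditions of $A$ compatible with $r$, so that capturing $g$ by a condition below $r$ transfers to capturing $f$; the combination of the two witnesses $r$ and $r''$ then relies precisely on directedness.
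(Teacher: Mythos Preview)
Your proposal is correct and follows essentially the same route as the paper's proof: both directions go through Theorem~\ref{thm:StrongPrikryForNaturalPosets}, the forward direction by taking $r=\eins$ in $(*)$ for a $\le_1$-pressing down $f$, and the converse by replacing a given $f$ with the auxiliary $\le_1$-pressing down $g$ (equal to $f$ where $a\|r$ and to $a$ otherwise), capturing $g$, and then combining the two witnesses via directedness. Aside from differing variable names for the witnesses, there is no substantive difference.
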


\begin{proof} If $\kla{\P,\le,\le_1}$ satsifies the strong \Prikry{} property at $A$, then the condition of Theorem \ref{thm:StrongPrikryForNaturalPosets} is satisfied. But this implies that every $\le_1$-pressing down function $f:A\To\P$ is captured, because we can let $r=\eins$ in Theorem \ref{thm:StrongPrikryForNaturalPosets}. Vice versa, suppose every $\le_1$-pressing down function on $A$ is captured. To show that $\kla{\P,\le,\le_1}$ satisfies the strong \Prikry{} property at $A$, I use the characterization given in Theorem \ref{thm:StrongPrikryForNaturalPosets} again. So let $f:A\To\P$ be pressing down (not necessarily $\le_1$-pressing down), and let $r\le_1\eins$ be such that for all $a\in A$, if $a||r$, then $f(a)\le_1a$. Define $g:A\To\P$ by letting $g(a)=f(a)$ if $a||r$, and $g(a)=a$ otherwise. Then $g$ is a $\le_1$-pressing down function on $A$. By assumption, there is an $r'\le_1\eins$ that captures $g$. By directedness, we can let $r''\le_1\eins$ be such that $r''\le r,r'$. It follows that $r''$ captures $f$, because if $p\le a,r''$, where $a\in A$, then $a||r''$, so $a||r$, so $g(a)=f(a)$, and since $r''$ captures $g$, it follows that $p\le g(a)=f(a)$, as desired.
\end{proof}

\section{Forcing notions satisfying the strong \Prikry{} property}
\label{sec:ForcingsWithSPP}

I will now show that the forcing notions used in \cite{FH:BVU} to represent iterated ultrapowers as single Boolean ultrapowers satisfy the strong \Prikry{} property. As a consequence, all maximal antichains in these forcing notions are simple, with respect to the canonical ultrafilter on the associated Boolean algebra.

\subsection{\Prikry{} forcing}

\begin{thm}
\label{thm:PrikryForcingHasStrongPrikryProperty}
\Prikry{} forcing, with the direct extension sub-ordering, satisfies the strong \Prikry{} property.
\end{thm}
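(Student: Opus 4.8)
The strategy is to verify the hypotheses of Theorem~\ref{thm:StrongPrikryForNaturalPosets2} for Prikry forcing $\P=\P_\mu$ with its usual direct extension order $\le_1$ (same stem, shrink the measure-one sets). The Prikry property is the classical theorem, so the work is to check that $\le_1$ is directed, connected, and natural, and then to verify the capturing condition: every $\le_1$-pressing down function on a maximal antichain $A$ is captured by some $r\le_1\eins$.

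First I would recall the setup: a condition is $p=\kla{s,X}$ with $s$ a finite sequence below $\kappa$, $X\in\mu$, $\max s<\min X$, and $\kla{t,Y}\le_1\kla{s,X}$ iff $s=t$ and $Y\sub X$. Directedness is immediate: two direct extensions $\kla{s,Y}$, $\kla{s,Z}$ of $\kla{s,X}$ have the common direct extension $\kla{s,Y\cap Z}$, using $\kappa$-completeness of $\mu$. Connectedness is also routine: if $\kla{s,Y}\le_1\kla{s,X}$ and $\kla{s,Y}\le q'\le\kla{s,X}$, then $q'$ must be of the form $\kla{s,Z}$ with $Y\sub Z\sub X$, hence $q'\le_1\kla{s,X}$. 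Naturalness requires: if $r\le_1\eins$ (so $r$ has empty stem, $r=\kla{\emptyset,R}$) and $a\,||\,r$, then there is $p\le_1 a$ with $p\le r$. Writing $a=\kla{s,A'}$, compatibility with $\kla{\emptyset,R}$ means (after the standard analysis) that $s$ enumerates elements of $R$ and $\min A'$-region is compatible; one then takes $p=\kla{s,A'\cap R}$ (shrinking above $\max s$), which is $\le_1 a$ and $\le r$. I expect a small amount of care here in spelling out when $\kla{s,A'}$ and $\kla{\emptyset,R}$ are compatible, but it is standard.

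The heart of the matter is capturing. Let $A\sub\P$ be a maximal antichain and $f:A\To\P$ a $\le_1$-pressing down function, so for $a=\kla{s_a,X_a}\in A$ we have $f(a)=\kla{s_a,Y_a}$ with $Y_a\sub X_a$, $Y_a\in\mu$. I need $r=\kla{\emptyset,R}\le_1\eins$ such that whenever $p\le a,r$ then $p\le f(a)$; since $f(a)$ and $a$ have the same stem, the only difference is the measure-one set, so $p\le a,r$ forces $p\le f(a)$ precisely when every extension of $r$ that lands below $a$ already ``sees'' $Y_a$ rather than just $X_a$ above the stem $s_a$. Concretely, if $p=\kla{t,Z}\le\kla{s_a,X_a}$ then $s_a$ is an initial segment of $t$, the part of $t$ past $s_a$ lies in $X_a$, and $Z\sub X_a$; to also get $p\le\kla{s_a,Y_a}$ I need the part of $t$ past $s_a$ to lie in $Y_a$ and $Z\sub Y_a$. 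So it suffices to find $R\in\mu$ such that for every $a\in A$ and every finite sequence $u$ from $R$ with $s_a\verl u$ a legitimate stem extending into $X_a$, one has $u\sub Y_a$, and moreover the ``tails from $R$'' are absorbed into $Y_a$. The natural candidate is a diagonal intersection: enumerate/organize the antichain and set $R=\triangle$ of the sets $Y_a$ suitably indexed by their stems, together with $R\sub X_a$ on the relevant part; since there are only boundedly many possible stems below any fixed ordinal and $\mu$ is $\kappa$-complete and normal, the diagonal intersection is in $\mu$. The key combinatorial point — and the main obstacle — is to phrase this diagonal intersection correctly so that it simultaneously handles all $a\in A$ whose stems are ``compatible with being reached from $R$'', using normality of $\mu$ to absorb the dependence of $Y_a$ on the stem $s_a$. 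Once $R$ is produced, checking that $r=\kla{\emptyset,R}$ captures $f$ is a direct unwinding of the ordering on Prikry conditions.

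Finally, having verified Prikry property, directedness, connectedness, naturalness, and the capturing of every $\le_1$-pressing down function on every maximal antichain, Theorem~\ref{thm:StrongPrikryForNaturalPosets2} yields the strong Prikry property at every maximal antichain $A\sub\P$, hence the strong Prikry property for $\P$, as claimed.
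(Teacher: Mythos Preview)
Your proposal is correct and follows essentially the same route as the paper: verify the \Prikry{} property, directedness, connectedness, and naturalness directly, then invoke Theorem~\ref{thm:StrongPrikryForNaturalPosets2} and capture an arbitrary $\le_1$-pressing down function on a maximal antichain by taking a diagonal intersection of the shrunken measure-one sets $Y_a$, indexed by the stems. The paper's write-up makes the diagonal intersection explicit as $X=\diag_{b\in F}Y_b$ where $F$ is the set of stems occurring in $A$ (each stem appears at most once since $A$ is an antichain), and then checks that $\kla{\leer,X}$ captures $f$ by exactly the unwinding you describe; your side remark about also intersecting with the $X_a$ is unnecessary since $Y_a\sub X_a$ already.
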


\begin{proof} Let $\P=\P_\mu$ be \Prikry{} forcing with respect to the normal measure $\mu$ on the measurable cardinal $\kappa$. As usual, define the direct extension order by setting $\kla{s,T}\le_1\kla{s',T'}$ if $\kla{s,T}\le\kla{s',T'}$ and $s=s'$. It is well-known then that $\kla{\P,\le,\le_1}$ satisfies the \Prikry{} property. The collection of direct extensions of a fixed condition is then ${<}\kappa$-directed, which gives condition \ref{item:Directedness-poset} of Definition \ref{defn:StrongPrikryProperty-poset}. Condition \ref{item:Connectedness-poset}, connectedness, is immediate (we even get that if $q\le_1 p$ and $q\le q'\le p$, then $q\le_1 q'\le_1 p$). It is also obvious that the direct extension ordering is natural (see Definition \ref{def:Natural}): let $r=\kla{\leer,R}\in\P$ and let $a=\kla{s,T}||r$. Then $p=\kla{s,R\cap T}\le_1 a$ and $p\le r$.


So Theorem \ref{thm:StrongPrikryForNaturalPosets2} applies and tells us that we have to show that if $A$ is a maximal antichain in $\P$ and $f:A\To\P$ is a $\le_1$-pressing down function, then there is an $r\le_1\eins$ that captures $f$.

Let $F=\{b\st\exists Z\quad\kla{b,Z}\in A\}$. Note that for $b\in F$, there is a unique $Z_b$ with $\kla{b,Z_b}\in A$, because $A$ is an antichain. Let $f(\kla{b,Z_b})=\kla{b,Y_b}$. Set:
\[X=\diag_{b\in F}Y_b\]
I claim that $r:=\kla{\leer,X}$ captures $f$. To see this, let $\kla{a,Z_a}\in A$. Let $p\le r,\kla{a,Z_a}$. It has to be shown that $p\le f(\kla{a,Z_a})$, i.e., that $p\le Y_a$. Let $p=\kla{c,D}$. So it has to be shown that $(c\ohne a)\sub Y_a$ and that $D\sub Y_a$. But since $\kla{c,D}\le\kla{\leer,X}$, it follows in particular for $\alpha\in c\ohne a$ that $\alpha\in X\sub\diag_{b\in F}Y_b$, so $\alpha\in Y_a$. Similarly, $c\sub\min(D)$, and $\kla{c,D}\le\kla{\leer,X}$, so $D\sub Y_a$, as wished.

This shows that \Prikry{} forcing satisfies the strong \Prikry{} property. \end{proof}


\subsection{Magidor forcing}

The next goal is to show that Magidor forcing, together with the direct extension ordering $\kla{g',H'}\le_1\kla{g,H}$ iff $g=g'$ and $\kla{g',H'}\le\kla{g,H}$, satisfies the strong \Prikry{} property. For the substantial direction in the capturing antichains condition, a replacement of the diagonal intersection in the case of \Prikry{} forcing is needed. That replacement is based on the following Diagonalization Lemma, due to Magidor.

\begin{lem}[Diagonalization, {\cite[Lemma 4.2]{Magidor78:ChangingCofinalitiesOfCardinals}}]
\label{lem:MagidorDiagonalization}
Let $\kla{g,G}\in\MF$, $\gamma\in\alpha\ohne\dom(g)$. Let $\rho=r_{\dom(g)}(\gamma)$, $\eta=g(\rho)$, $Z=f^\rho_\gamma(\eta)$ (where $g(\alpha)$ is understood to be $\kappa$ and $f^\kappa_\gamma(\kappa)$ is understood to be $U_\gamma$). Let $A\in Z$, and for every $\xi\in A$, let $\kla{g\cup\{\kla{\gamma,\xi}\},H^\xi}\le\kla{g,G}$.

Then there is a condition $\kla{g,H}\le\kla{g,G}$ with the property that whenever $\kla{j,J}\le\kla{g,H}$ has $\gamma\in\dom(j)$, then $\kla{j,J}\le\kla{g\cup\{\kla{\gamma,\xi}\},H^\xi}$, where $\xi=j(\gamma)$.
\end{lem}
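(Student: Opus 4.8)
The plan is to build the amalgamated large--set part $H$ by working separately on the regions into which $\gamma$ splits the block of $\dom(g)\cup\{\alpha\}$ in which it sits: the coordinates lying strictly between $\gamma$ and $\rho$, the slot for $\gamma$ itself, and the coordinates lying below $\gamma$; for coordinates in any other block of $\dom(g)\cup\{\alpha\}$ the amalgamation is routine. The crucial distinction is that adjoining $\gamma$ to the stem does not change the measure attached to a coordinate $\beta$ with $\gamma<\beta<\rho$ (it is still $f^\rho_\beta(\eta)$), nor to a coordinate in another block, whereas for $\beta<\gamma$ the measure changes from $f^\rho_\beta(\eta)$ to $f^\gamma_\beta(\xi)$ — and it is precisely there that the coherence of Magidor's measure sequence must be used. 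Note that the $\kla{g,H}$ we produce has the same stem $g$, so it is in fact a direct extension of $\kla{g,G}$, which is what its intended use in the strong \Prikry{} property requires.

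Concretely, I would proceed as follows. For a coordinate $\beta$ with $\gamma<\beta<\rho$, or lying in another block, all the sets $H^\xi(\beta)$ ($\xi\in A$) are measure-one for one and the same measure, so I put $H(\beta):=G(\beta)\cap\bigcap_{\xi\in A}H^\xi(\beta)$ — legitimate by the $\kappa$-completeness of the measures (and, if the index set of $Z$ reaches $\kappa$, by normality: for coordinates above $\gamma$ one replaces the intersection by a diagonal intersection over $\xi$, which order-preservation of stems makes sufficient). For the coordinate $\gamma$ itself I start with $H(\gamma):=G(\gamma)\cap A\in Z$. For a coordinate $\beta<\gamma$ I invoke coherence: computing in $\Ult(\V,Z)$, the function $\xi\mapsto H^\xi(\beta)$ represents a set which, since $H^\xi(\beta)\in f^\gamma_\beta(\xi)$ for $Z$-almost all $\xi$, is a member of the measure that coherence identifies with $f^\rho_\beta(\eta)$ (restricted appropriately); hence one reads off a single $H(\beta)\in f^\rho_\beta(\eta)$ with $H(\beta)\cap\xi\subseteq H^\xi(\beta)$ for $Z$-almost all $\xi$. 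Having chosen such $H(\beta)$ for every $\beta<\gamma$ in the block (there are $<\kappa$ of them), I finally shrink $H(\gamma)$ to the still-$Z$-measure-one set of those $\xi$ which are ``good'' for every relevant $\beta$, i.e.\ for which $H(\beta)\cap\xi\in f^\gamma_\beta(\xi)$ and $H(\beta)\cap\xi\subseteq H^\xi(\beta)$; this turns ``$Z$-almost every $\xi$'' into ``every $\xi\in H(\gamma)$''.

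It then remains to verify that $\kla{g,H}$ works. That $\kla{g,H}\le\kla{g,G}$ is immediate. Suppose $\kla{j,J}\le\kla{g,H}$ with $\gamma\in\dom(j)$ and put $\xi:=j(\gamma)$; since $H(\gamma)\subseteq A$ we have $\xi\in A$, so $\kla{g\cup\{\kla{\gamma,\xi}\},H^\xi}$ is among the given conditions, and $g\cup\{\kla{\gamma,\xi}\}\subseteq j$. For every point of $j$ or coordinate of $J$ lying above $\gamma$, or in the $\gamma$-slot, or in another block, the inclusion required for $\kla{j,J}\le\kla{g\cup\{\kla{\gamma,\xi}\},H^\xi}$ holds because $H$ was there defined as a (diagonal) intersection refining each $H^\xi$. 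For a point of $j$ or coordinate of $J$ lying below $\gamma$ — which, measured against the stem $g\cup\{\kla{\gamma,\xi}\}$, now sits in the newly created $\gamma$-block and is constrained by $H^\xi(\beta)\in f^\gamma_\beta(\xi)$ — the inclusion holds because passing from $\kla{g,H}$ through the adjunction of $\gamma\mapsto\xi$ forces the new $\gamma$-block constraint at $\beta$ to refine $H(\beta)\cap\xi$, and $\xi$ was chosen good, so $H(\beta)\cap\xi\subseteq H^\xi(\beta)$. The main obstacle is this last, coherence-driven step for coordinates below $\gamma$; it is exactly the point at which Magidor forcing needs more than the single diagonal intersection that sufficed for \Prikry{} forcing, while everything else is bookkeeping with $\kappa$-complete (or normal) intersections.
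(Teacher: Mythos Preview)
The paper does not prove this lemma; it is stated with a citation to Magidor's original paper and then used as a black box. So there is no ``paper's own proof'' to compare against.

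Your sketch is essentially Magidor's argument and is correct in its main points: the decomposition into coordinates above $\gamma$ (handled by a diagonal intersection over $\xi$, legitimate because stem values are increasing), the $\gamma$-slot itself (shrunk to $A$ and then further), and coordinates below $\gamma$ within the same block (handled via coherence of the measure sequence, exactly as you describe). The verification at the end is also right, and you have correctly identified the coherence step for $\beta<\gamma$ as the one place where Magidor forcing genuinely goes beyond the single diagonal intersection of \Prikry{} forcing.

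One point deserves a bit more care than ``routine'': coordinates $\beta$ lying in a block \emph{strictly below} $\gamma$'s block, i.e.\ with $r_{\dom(g)}(\beta)=\sigma\in\dom(g)$ and $\sigma<\gamma$. There the measure at $\beta$ lives on $g(\sigma)$, while the index set $A$ may have size up to $\eta>g(\sigma)$, so neither a plain intersection nor a diagonal intersection over $\xi\in A$ is directly available (values at $\beta$ are below every $\xi$, not above). The standard fix is to use the completeness of $Z$: since the stem values are strong limits one has $2^{g(\sigma)}<\eta$, so the map $\xi\mapsto H^\xi(\beta)$ takes fewer than $\eta$ values and is therefore constant on a $Z$-large set; take that constant as $H(\beta)$ and fold the corresponding large set into the final shrinking of $H(\gamma)$. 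There are fewer than $\alpha$ such $\beta$, and the requisite completeness of $Z$ handles the intersection. With this adjustment your argument goes through.
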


This allows us to prove the following.

\begin{lem}
\label{lem:DiagonalizingWRTFiniteSequencesLocally}
Let $A\sub\MF$ be an antichain, and let $h:A\To\MF$ be a $\le_1$-pressing down function.

Whenever $\kla{g,G}\in\MF$ and $\alpha_0<\alpha_1<\ldots<\alpha_{n-1}<\alpha$ is such that for all $i<n$, $\alpha_i\notin\dom(g)$, then there is a condition $\kla{g,H}\le_1\kla{g,G}$ such that the following holds:
\claim{$(*)$}{For all $a=\kla{t,T}\in A$ with $g\sub t$ and $\dom(t)\ohne\dom(g)=\{\alpha_0,\ldots,\alpha_{n-1}\}$ and all $\kla{s,B}\in\MF$, if $\kla{s,B}\le a,\kla{g,H}$, then $\kla{s,B}\le h(a)$.}
\end{lem}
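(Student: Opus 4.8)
The plan is to prove this by induction on $n$, peeling off the smallest coordinate $\alpha_0$ at each step and using Magidor's Diagonalization Lemma (Lemma~\ref{lem:MagidorDiagonalization}) to glue the pieces. I will freely use two standard facts about Magidor forcing: extending a condition never alters the value its stem takes at a coordinate already in its domain, so $\kla{s,B}\le\kla{t,T}$ forces $s\rest\dom(t)=t$; and any two conditions with the same stem have a common refinement with that stem (this is directedness, part~\ref{item:Directedness-poset} of Definition~\ref{defn:StrongPrikryProperty-poset}, established for Magidor forcing).

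For the base case $n=0$, the only $a=\kla{t,T}\in A$ for which $(*)$ is not vacuous is the unique one with $t=g$, if such exists; write it $a=\kla{g,T}$, so $h(a)=\kla{g,Y}$ for some $Y$ since $h$ is $\le_1$-pressing down. Let $\kla{g,H}$ be a same-stem common refinement of $\kla{g,G}$ and $\kla{g,Y}$, so $\kla{g,H}\le_1\kla{g,G}$; then any $\kla{s,B}\le\kla{g,H}$ already satisfies $\kla{s,B}\le\kla{g,Y}=h(a)$, giving $(*)$ (take $H=G$ if no element of $A$ has stem $g$). For the inductive step, assume the lemma for $n$ and let $\kla{g,G}$ and $\alpha_0<\dots<\alpha_n<\alpha$ be given, all $\alpha_i\notin\dom(g)$. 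Put $\gamma=\alpha_0$ and form $\rho=r_{\dom(g)}(\gamma)$, $\eta=g(\rho)$, $Z=f^\rho_\gamma(\eta)$ as in Lemma~\ref{lem:MagidorDiagonalization}. Let $A_0$ be the set of $\xi$ that are legal values at $\gamma$ below $\kla{g,G}$, i.e.\ those $\xi$ for which the canonical one-point extension $\kla{g\cup\{\kla{\gamma,\xi}\},G^\xi}\le\kla{g,G}$ exists; by the definition of Magidor forcing, $A_0\in Z$. For each $\xi\in A_0$ apply the induction hypothesis to $\kla{g\cup\{\kla{\gamma,\xi}\},G^\xi}$ and the sequence $\alpha_1<\dots<\alpha_n$ (whose entries avoid $\dom(g)\cup\{\gamma\}$), obtaining $\kla{g\cup\{\kla{\gamma,\xi}\},H^\xi}\le_1\kla{g\cup\{\kla{\gamma,\xi}\},G^\xi}\le\kla{g,G}$ that witnesses $(*)$ for stems extending $g\cup\{\kla{\gamma,\xi}\}$ whose domain adds exactly $\{\alpha_1,\dots,\alpha_n\}$. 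Feeding the conditions $\kla{g\cup\{\kla{\gamma,\xi}\},H^\xi}$ ($\xi\in A_0$) into Lemma~\ref{lem:MagidorDiagonalization} yields $\kla{g,H}\le\kla{g,G}$ — same stem, hence $\kla{g,H}\le_1\kla{g,G}$ — such that every $\kla{j,J}\le\kla{g,H}$ with $\gamma\in\dom(j)$ satisfies $\kla{j,J}\le\kla{g\cup\{\kla{\gamma,\xi}\},H^\xi}$ where $\xi=j(\gamma)$.

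To verify $(*)$ for this $H$, take $a=\kla{t,T}\in A$ with $g\sub t$ and $\dom(t)\ohne\dom(g)=\{\alpha_0,\dots,\alpha_n\}$, and $\kla{s,B}\le a,\kla{g,H}$. Put $\xi=t(\alpha_0)$. From $\kla{s,B}\le\kla{t,T}$ we get $\alpha_0\in\dom(s)$ and $s(\alpha_0)=\xi$, and since $\kla{s,B}\le\kla{g,H}\le\kla{g,G}$, the value $\xi$ is legal at $\gamma$ below $\kla{g,G}$, so $\xi\in A_0$. By the conclusion of Lemma~\ref{lem:MagidorDiagonalization}, $\kla{s,B}\le\kla{g\cup\{\kla{\gamma,\xi}\},H^\xi}$. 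But $t$ extends $g\cup\{\kla{\gamma,\xi}\}$ with $\dom(t)\ohne(\dom(g)\cup\{\gamma\})=\{\alpha_1,\dots,\alpha_n\}$, so $a$ is one of the stems for which $\kla{g\cup\{\kla{\gamma,\xi}\},H^\xi}$ was chosen to witness $(*)$; as $\kla{s,B}\le a$ and $\kla{s,B}\le\kla{g\cup\{\kla{\gamma,\xi}\},H^\xi}$, we conclude $\kla{s,B}\le h(a)$, as desired.

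The main obstacle is not conceptual but one of bookkeeping: one must ensure that the recursively obtained conditions $\kla{g\cup\{\kla{\gamma,\xi}\},H^\xi}$ are genuinely $\le\kla{g,G}$, so that Lemma~\ref{lem:MagidorDiagonalization} applies with the family indexed by all of $A_0$, and that the stem set with which the induction hypothesis is invoked matches $\dom(t)\ohne(\dom(g)\cup\{\gamma\})$ when checking $(*)$. The one point genuinely requiring the anatomy of Magidor forcing, beyond Lemma~\ref{lem:MagidorDiagonalization}, is the claim $A_0\in Z$: $A_0$ is the intersection of $Z$ with the constraint $G$ imposes at $\rho$, which is large in the filter to which $Z$ belongs.
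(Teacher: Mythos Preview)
Your proof is correct and follows essentially the same route as the paper: induction on $n$, trivial base case handling the unique $a$ with stem $g$ (if any), and in the inductive step peeling off $\alpha_0$, applying the induction hypothesis at each legal value $\xi$, and then invoking Magidor's Diagonalization Lemma to fuse. The paper phrases the index set as $G(\alpha_0)$ after assuming $\kla{g,G}$ is pruned, where you instead isolate the set $A_0$ of legal values and argue $A_0\in Z$; these amount to the same thing, and your explicit remark that the diagonalized condition has stem $g$ (hence is a $\le_1$-extension) is a point the paper leaves implicit.
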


\begin{proof} Fix $A$ and $h$. I will prove the lemma by induction on $n$.

For $n=0$, let $\kla{g,G}\in\MF$. No $\alpha_i$s are given. First, suppose there is an $a=\kla{g,T}\in A$. Note that there can be at most one such $a$, because any two conditions with the same first coordinate are compatible, and $A$ is an antichain. Since $h(a)\le_1 a$, it follows that $h(a)$ has the form $\kla{g,I}$, for some $I$. One can then set $\kla{g,H}=\kla{g,G\cap I}$, in the sense that $(G\cap I)(\xi)=G(\xi)\cap I(\xi)$, for $\xi\in\dom(G)=\dom(I)$.

Clearly, $\kla{g,H}$ satisfies $(*)$: suppose $a'=\kla{t',T'}\in A$, $g\sub t'$, and $\dom(t')\ohne\dom(g)=\leer$, i.e., $t'=g$. Let $\kla{s,B}\le a',\kla{g,H}$. Then $\kla{s,B}\le\kla{g,H}\le\kla{g,I}=h(a)\le a$. So $a=a'$ and so, $\kla{s,B}\le h(a')$, as wished.

Now suppose there is no $a\in A$ that is of the form $\kla{g,T}$. Then we may set $H=G$. Then, $\kla{g,H}$ vacuously satisfies $(*)$, because there is no $a=\kla{t,T}\in A$ such that $g\sub t$ and $\dom(t)\ohne\dom(g)=\leer$, because this would mean that $g=t$, which is what was excluded in the present case.

Now for the induction step, let us assume the lemma for $n$, i.e., for any list of $\alpha$s of length $n$. Let $\alpha_0<\alpha_1<\ldots<\alpha_n<\alpha$ and $\kla{g,G}\in\MF$ be given.
For $\xi\in G(\alpha_0)$, apply the lemma to the condition $\kla{g,G}_{\kla{\alpha_0,\xi}}$ (this is the weakest condition that has first coordinate $g\cup\{\kla{\alpha_0,\xi}\}$ and is stronger than $\kla{g,G}$ - we may assume that $\kla{g,G}$ is pruned, which implies that such conditions exist for every $\xi\in G(\alpha_0)$) and to the length $n$ list $\alpha_1<\alpha_2<\ldots<\alpha_n$. For each such $\xi$, we get a condition $\kla{g^\xi,H^\xi}\le_1\kla{g,G}_{\kla{\alpha_0,\xi}}$ such that the following holds:
\claim{$(*_\xi)$}{For all $a=\kla{t,T}\in A$ with $g^\xi\sub t$ and $\dom(t)\ohne\dom(g^\xi)=\{\alpha_1,\ldots,\alpha_n\}$, and
all $\kla{s,B}\in\MF$, if $\kla{s,B}\le a,\kla{g^\xi,H^\xi}$, it follows that $\kla{s,B}\le h(a)$.}
Now, let us apply Lemma \ref{lem:MagidorDiagonalization} to the condition $\kla{g,G}$, $\gamma=\alpha_0$ and the conditions $\seq{\kla{g^\xi,H^\xi}}{\xi\in G(\alpha_0)}$. The result is a condition $\kla{g,H}\le\kla{g,G}$ such that whenever $\kla{s,B}\le\kla{g,H}$ with $\alpha_0\in\dom(s)$, then $\kla{s,B}\le\kla{g^\xi,H^\xi}$, where $\xi=s(\alpha_0)$.

I claim that $\kla{g,H}$ is as wished, i.e., it satisfies $(*)$ with respect to $\alpha_0<\alpha_1<\ldots<\alpha_n$. To see this, let $a=\kla{t,T}\in A$, with $g\sub t$ and $\dom(t)\ohne\dom(g)=\{\alpha_0,\ldots,\alpha_n\}$. Let $\kla{s,B}\le a,\kla{g,H}$. Since $\kla{s,B}\le a=\kla{t,T}$, it follows that $\alpha_0\in\dom(s)$, and hence, it follows by our diagonalization that $\kla{s,B}\le\kla{g^\xi,H^\xi}$, for $\xi=s(\alpha_0)$. Now we can apply $(*_\xi)$ to $a$ and $\kla{s,B}$ - note that since $\kla{s,B}\le\kla{t,T}$, $\xi=s(\alpha_0)=t(\alpha_0)$, and so $g^\xi\sub t$, and moreover, $\dom(t)\ohne\dom(g^\xi)=\{\alpha_1,\ldots,\alpha_n\}$. The conclusion is the desired one: $\kla{s,B}\le h(a)$. \end{proof}

Here is a global version of the previous lemma.

\begin{lem}
\label{lem:DiagonalizingWRTFiniteSequencesGlobally}
Let $A\sub\MF$ be an antichain, and let $h:A\To\MF$ be a $\le_1$-pressing down function.

For every $\kla{g,G}\in\MF$, there is a condition $\kla{g,H}\le_1\kla{g,G}$ such that for all $a=\kla{t,T}\in A$ with $g\sub t$ and all $\kla{s,B}\in\MF$, if $\kla{s,B}\le a,\kla{g,H}$, then $\kla{s,B}\le h(a)$.
\end{lem}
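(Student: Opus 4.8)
The plan is to reduce Lemma \ref{lem:DiagonalizingWRTFiniteSequencesGlobally} to Lemma \ref{lem:DiagonalizingWRTFiniteSequencesLocally} by a diagonalization over all possible lengths $n$ and all possible increasing tuples $\alpha_0 < \cdots < \alpha_{n-1} < \alpha$ of coordinates to be filled in beyond $\dom(g)$. The subtlety is that, unlike in the \Prikry{} forcing case where the stem is always a finite subset of $\kappa$ and one diagonal intersection suffices, here there are infinitely many relevant ``shapes'' of extensions of $\kla{g,G}$, and Lemma \ref{lem:DiagonalizingWRTFiniteSequencesLocally} only handles one fixed tuple of new coordinates at a time. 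So I would organize the argument as an iterated application of the local lemma, interleaved with a diagonalization on the support, very much in the spirit of how one passes from ``deciding one statement'' to ``a fusion condition deciding all statements'' in Magidor forcing.

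Here is the intended structure. First I would fix $A$, $h$, and $\kla{g,G}\in\MF$, and without loss of generality assume $\kla{g,G}$ is pruned. The coordinates available above $\dom(g)$ form a set of size $\le\kappa$ (really governed by the measure on the relevant interval), and one enumerates, in a bookkeeping fashion, all the finite increasing tuples $\vec\alpha$ of such coordinates. One then builds a $\le_1$-decreasing sequence of conditions $\kla{g,G} = \kla{g,G_0} \ge_1 \kla{g,G_1} \ge_1 \cdots$, where at stage $\beta$ one applies Lemma \ref{lem:DiagonalizingWRTFiniteSequencesLocally} to the current condition and the $\beta$-th tuple $\vec\alpha_\beta$ to get $\kla{g,G_{\beta+1}} \le_1 \kla{g,G_\beta}$ witnessing $(*)$ for that tuple. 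At limit stages one takes the natural ``intersection'' / fusion of the conditions built so far, using the standard Magidor-forcing fusion machinery (diagonal intersection along coordinates and greatest-lower-bound on the measure-one sets), which is legitimate precisely because the $\le_1$-decreasing sequence respects the support structure; this is where one must be careful that the fusion lands below every $\kla{g,G_\beta}$ in the $\le_1$-order and that the first coordinate stays $g$. Finally $\kla{g,H}$ is the fusion of the whole sequence.

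To verify that this $\kla{g,H}$ works: given $a = \kla{t,T}\in A$ with $g\sub t$, the set $\dom(t)\ohne\dom(g)$ is some finite increasing tuple $\vec\alpha = \{\alpha_0,\ldots,\alpha_{n-1}\}$ of coordinates above $\dom(g)$, so $\vec\alpha = \vec\alpha_\beta$ for some $\beta$ in our enumeration. Now suppose $\kla{s,B}\le a, \kla{g,H}$. Since $\kla{g,H}\le_1\kla{g,G_{\beta+1}}$, we have $\kla{s,B}\le a,\kla{g,G_{\beta+1}}$, and $a$ has exactly the shape that $(*)$ for $\kla{g,G_{\beta+1}}$ and $\vec\alpha_\beta$ controls, so we conclude $\kla{s,B}\le h(a)$, as desired. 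The only thing to double-check is that the property $(*)$ proved for $\kla{g,G_{\beta+1}}$ is inherited by the stronger condition $\kla{g,H}$ — this is immediate because $(*)$ is downward absolute: if $\kla{s,B}\le a,\kla{g,H}$ then a fortiori $\kla{s,B}\le a,\kla{g,G_{\beta+1}}$.

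The main obstacle I anticipate is purely the fusion at limit stages: one must make sure that the length of the enumeration of tuples can be handled by the available fusion (i.e., that Magidor forcing admits a fusion along a $\le_1$-decreasing sequence indexed by an ordinal large enough to exhaust all finite tuples of coordinates, with diagonalization over the coordinates), and that the fusion condition genuinely satisfies $\kla{g,H}\le_1\kla{g,G_\beta}$ for all $\beta$ simultaneously. In practice this is the standard Prikry-type fusion for Magidor forcing, and the key point that makes it go through is that all the conditions in the sequence share the first coordinate $g$, so the fusion is really just a simultaneous shrinking of the measure-one sets assigned to each coordinate, organized by a diagonal intersection. Modulo invoking that standard machinery, the rest is the routine bookkeeping sketched above.
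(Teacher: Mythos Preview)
Your approach is correct, but it is considerably more elaborate than what the paper actually does, and the extra machinery buys nothing here. The paper's proof is a one-liner: for each finite tuple $\vec\alpha$ of coordinates in $\alpha\setminus\dom(g)$, apply Lemma~\ref{lem:DiagonalizingWRTFiniteSequencesLocally} once to obtain $\langle g, H^{\vec\alpha}\rangle$, and then set $H(\gamma)=\bigcap_{\vec\alpha} H^{\vec\alpha}(\gamma)$ for each $\gamma\in\alpha\setminus\dom(g)$. The point is that in Magidor forcing the index set $\alpha$ is smaller than every measurable cardinal occurring in the sequence, so $|[\alpha]^{<\omega}|<\kappa_\gamma$ for every relevant $\gamma$, and the intersection is over fewer than $\kappa_\gamma$ many measure-one sets; by $\kappa_\gamma$-completeness of the normal ultrafilter at $\gamma$, the result is still measure one. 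No recursion, no bookkeeping, no diagonal intersection, no fusion at limits.

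Your transfinite construction with fusion would also succeed, but for exactly the same reason: the $\le_1$-decreasing sequence you build has length $|[\alpha]^{<\omega}|$, which is below the completeness of every ultrafilter involved, so the ``fusion'' at limit stages is again just a plain intersection. The diagonal-intersection machinery you anticipate needing is a red herring in this lemma (it is used inside the proof of the \emph{local} Lemma~\ref{lem:DiagonalizingWRTFiniteSequencesLocally}, not here). So the main thing to take away is the observation you did not explicitly make: the set of finite tuples of coordinates is small relative to the measurables, and that is what makes the global step trivial once the local lemma is in hand.
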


\begin{proof} For every finite sequence $\valpha=\alpha_0<\ldots<\alpha_{n-1}<\alpha$, there is an extension $\kla{g,H^\valpha}\le\kla{g,G}$ that has the desired property with respect to any $a=\kla{t,T}\in A$ with $g\sub t$ and $\dom(t)\ohne\dom(g)=\{\valpha\}$, by Lemma \ref{lem:DiagonalizingWRTFiniteSequencesLocally}. Since $\alpha$ is less than any of the measurable cardinals involved, one can define $\kla{g,H}$ by setting $H(\gamma)=\bigcap_{\valpha}H^\valpha(\gamma)$, for $\gamma\in\alpha\ohne\dom(g)$. This condition is as wished. \end{proof}

\begin{thm}
\label{thm:MagidorForcingHasStrongPrikryProperty}
Magidor forcing, with the direct extension ordering, satisfies the strong \Prikry{} property.
\end{thm}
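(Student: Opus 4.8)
The plan is to apply Theorem \ref{thm:StrongPrikryForNaturalPosets2}, which reduces the strong \Prikry{} property for a natural, directed, connected poset with the \Prikry{} property to showing that every $\le_1$-pressing down function on a maximal antichain is captured. So first I would verify that Magidor forcing $\MF$, equipped with the direct extension ordering $\kla{g',H'}\le_1\kla{g,H}$ iff $g=g'$ and $\kla{g',H'}\le\kla{g,H}$, satisfies these hypotheses. The \Prikry{} property for Magidor forcing is classical (due to Magidor). Directedness follows because the direct extensions of a fixed condition $\kla{g,G}$ form a ${<}\kappa$-directed family, where $\kappa$ is the least measurable involved, since any two are of the form $\kla{g,G_0}$, $\kla{g,G_1}$ and one can intersect the measure-one sets coordinatewise. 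Connectedness is immediate from the definition: if $\kla{q_0,Q}\le_1\kla{g,G}$ and $\kla{q_0,Q}\le\kla{q_1,Q'}\le\kla{g,G}$, then $q_0=g$ forces $q_1=g$ as well. Naturalness is the observation that given $r=\kla{g,R}$ (a direct extension of $\eins$, so with stem the empty condition's stem) and $a=\kla{t,T}\,\|\,r$, one can find $p\le_1 a$ with $p\le r$ by intersecting the measure-one set of $a$ with the relevant part of $R$ along the stem $t$ — this is the analogue of the move $\kla{s,R\cap T}$ from the \Prikry{} forcing case, adapted to the tree-of-measures structure of Magidor conditions.

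The substantial remaining task is then: given a maximal antichain $A\sub\MF$ and a $\le_1$-pressing down function $h:A\To\MF$, produce a condition $r\le_1\eins$ that captures $h$, i.e., such that for all $a\in A$ and all $p\le r, a$ we have $p\le h(a)$. This is exactly what Lemma \ref{lem:DiagonalizingWRTFiniteSequencesGlobally} delivers: applying it with $\kla{g,G}=\eins$ (the weakest condition, with empty stem) yields a condition $\kla{\leer,H}\le_1\eins$ such that for every $a=\kla{t,T}\in A$ (automatically $\leer\sub t$) and every $\kla{s,B}\le a,\kla{\leer,H}$, we get $\kla{s,B}\le h(a)$. That is precisely the statement that $r:=\kla{\leer,H}$ captures $h$.

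So the proof is short: invoke the hypotheses-checking above, then invoke Lemma \ref{lem:DiagonalizingWRTFiniteSequencesGlobally} at $\eins$, then invoke Theorem \ref{thm:StrongPrikryForNaturalPosets2}. The main conceptual obstacle — the replacement of the diagonal intersection from the \Prikry{} forcing argument by something that works for Magidor conditions — has already been surmounted in the preparatory Lemmas \ref{lem:MagidorDiagonalization}, \ref{lem:DiagonalizingWRTFiniteSequencesLocally} and \ref{lem:DiagonalizingWRTFiniteSequencesGlobally}; the induction on the length of the finite sequence of ordinals added to the stem, feeding Magidor's Diagonalization Lemma at each step, is where the real work lies, and that is now behind us. The one place to be slightly careful in writing the final proof is confirming that naturalness holds with the correct form of $r$ (a direct extension of the maximal condition has the empty stem, so the witness $p$ only needs to match the stem $t$ of $a$ and then shrink measure-one sets), and that the ``$r$ captures $h$'' terminology from Definition \ref{def:Natural} lines up verbatim with the conclusion of Lemma \ref{lem:DiagonalizingWRTFiniteSequencesGlobally}. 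I expect no genuine difficulty there.

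\begin{proof}
Let $\MF=\MF_{\vec{U}}$ be Magidor forcing, and let $\le_1$ be the direct extension ordering: $\kla{g',H'}\le_1\kla{g,H}$ iff $g=g'$ and $\kla{g',H'}\le\kla{g,H}$. It is a classical theorem of Magidor that $\kla{\MF,\le,\le_1}$ has the \Prikry{} property. The set of direct extensions of a fixed condition is ${<}\kappa$-directed, where $\kappa$ is the least measurable cardinal in the sequence $\vec{U}$ (given $\kla{g,G_0},\kla{g,G_1}\le_1\kla{g,G}$, the condition obtained by intersecting the measure-one sets coordinatewise is a common direct extension), so condition \ref{item:Directedness-poset} of Definition \ref{defn:StrongPrikryProperty-poset} holds. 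Connectedness, condition \ref{item:Connectedness-poset}, is immediate from the definition of $\le_1$: if $\kla{g,Q}\le_1\kla{g,G}$ and $\kla{g,Q}\le\kla{q_1,Q'}\le\kla{g,G}$, then $q_1=g$, so $\kla{q_1,Q'}\le_1\kla{g,G}$. For naturalness (Definition \ref{def:Natural}): a direct extension $r$ of $\eins$ has the empty stem, so write $r=\kla{\leer,R}$; if $a=\kla{t,T}\,\|\,r$, then the condition $p$ with stem $t$ and measure-one sets obtained by intersecting those of $\kla{t,T}$ with the corresponding parts of $R$ is $\le_1 a$ and $\le r$. So $\kla{\MF,\le,\le_1}$ is natural, directed, connected, and has the \Prikry{} property.

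By Theorem \ref{thm:StrongPrikryForNaturalPosets2}, to establish the strong \Prikry{} property it suffices to show that for every maximal antichain $A\sub\MF$, every $\le_1$-pressing down function $h:A\To\MF$ is captured, i.e., there is an $r\le_1\eins$ such that for all $a=\kla{t,T}\in A$ and all $p\le r,a$, we have $p\le h(a)$. Fix such an $A$ and $h$, and apply Lemma \ref{lem:DiagonalizingWRTFiniteSequencesGlobally} to the weakest condition $\eins=\kla{\leer,G}$: we obtain a condition $r=\kla{\leer,H}\le_1\eins$ such that for all $a=\kla{t,T}\in A$ (note $\leer\sub t$ trivially) and all $\kla{s,B}\in\MF$ with $\kla{s,B}\le a,\kla{\leer,H}$, it follows that $\kla{s,B}\le h(a)$. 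This is exactly the statement that $r$ captures $h$. Hence, by Theorem \ref{thm:StrongPrikryForNaturalPosets2}, Magidor forcing has the strong \Prikry{} property with respect to the direct extension ordering.
\end{proof}
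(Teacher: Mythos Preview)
Your proof is correct and follows essentially the same approach as the paper's own proof: verify the \Prikry{} property, directedness, connectedness, and naturalness, then invoke Theorem \ref{thm:StrongPrikryForNaturalPosets2} to reduce to capturing $\le_1$-pressing down functions, and finally apply Lemma \ref{lem:DiagonalizingWRTFiniteSequencesGlobally} at $\eins$ to produce the capturing condition $r=\kla{\leer,H}$. The only difference is cosmetic---the paper cites Magidor's Lemmas 4.3 and 4.4 (with $\beta=-1$) for the \Prikry{} property rather than calling it classical, and is terser about naturalness---but the logical structure is identical.
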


\begin{proof} Let us first see that the \Prikry{} property is satisfied. Magidor forcing is separative, and it can easily be checked that \cite[Lemmas 4.3, 4.4]{Magidor78:ChangingCofinalitiesOfCardinals} go thru for $\beta=-1$, in the sense that for any condition $p\in\MF$, $(p)_{-1}=\leer$. Then \cite[Lemmas 4.3, 4.4]{Magidor78:ChangingCofinalitiesOfCardinals}, with $\beta=-1$, precisely states that for every $p\in\MF$ and every statement $\phi$ in the forcing language, there is a direct extension $q\le_1 p$ that decides $\phi$. This verifies the \Prikry{} property. It's worth pointing out that the \Prikry{} property is much less immediate for Magidor forcing than it was in the case of \Prikry{} forcing.

Condition 1.~of the strong \Prikry{} property (see Definition \ref{defn:StrongPrikryProperty-poset}) is clear: the $\le_1$ ordering is obviously directed and reflexive. Condition 2.~is just as clear as it was in the case of \Prikry{} forcing, and it is also obvious that the direct extension ordering is natural, in the sense of Definition \ref{def:Natural}. This means that Theorem \ref{thm:StrongPrikryForNaturalPosets2} applies, so that it has to be shown that whenever $A$ is a maximal antichain in $\MF$ and $f$ is a $\le_1$-pressing down function on $A$, then $f$ is captured, i.e., there is an $r\le_1\eins$ such that for all $a\in A$, if $p$ is any common extension of $a$ and $r$, then $p\le f(a)$.


To show this, I apply Lemma \ref{lem:DiagonalizingWRTFiniteSequencesGlobally}
to $f$ and the condition $\eins$. We get a condition $r\le_1\eins$, so $r$ has the form $\kla{\leer,H}$. So the first coordinate of $r$ is contained in anything, which means that the lemma guarantees that for all $a\in A$ and all $p=\kla{s,B}\in\MF$, if $p\le a,r$, then $p\le f(a)$. But this means that $r$ captures $f$. \end{proof}

\subsection{Generalized \Prikry{} forcing}

Finally, I want to show that the generalized \Prikry{} forcing of \cite{Fuchs:COPS}, also analyzed in \cite{FH:BVU}, satisfies the strong \Prikry{} property.

This generalization of \Prikry{} forcing is defined relative to a discrete set $D$ of measurable cardinals, with monotone enumeration $\seq{\kappa_i}{i<\alpha}$, a sequence of normal ultrafilters $\vU=\seq{U_i}{i<\alpha}$, where $U_i$ is a normal ultrafilter on $\kappa_i$, and a sequence $\seq{\eta_i}{i<\alpha}$ of ordinals in $[1,\omega]$. The forcing $\P=\P_{\vU,\veta}$ will add a set of ordinals of order type $\eta_i$ below $\kappa_i$, for each $i<\alpha$. In case $\eta_i=\omega$, that set will be cofinal in $\kappa_i$, so that the cofinality of $\kappa_i$ will become $\omega$. If $\eta_i<\omega$, then the cofinality of $\kappa_i$ will remain unchanged. Conditions in $\P$ are pairs $\kla{s,T}$, where $s$ is a function whose domain is a finite subset of $\alpha$, and for every $i\in\dom(s)$, $s(i)\sub\kappa_i\ohne\sup_{j<i}\kappa_j$ is finite and has size in $[1,\eta_i]$. By convention, $s(i)$ is taken to be $\leer$ if $i\notin\dom(s)$, and similarly for $T(i)$. $T$ is a function whose domain consists of all $i<\alpha$ with $|s(i)|<1+\eta_i$, and $T(i)\in U_i$, $s(i)\sub\min(T(i))$ for all $i\in\dom(T)$. The ordering is defined in the natural way: $\kla{s',T'}\le\kla{s,T}$ if for all $i<\alpha$, $s(i)\sub s'(i)$, $s'(i)\ohne s(i)\sub T(i)$, and for all $i<\alpha$, $T'(i)\sub T(i)$.

I define the direct extension ordering $\le_1$ in this case as expected:

\begin{defn}
For conditions $\kla{f,F}$ and $\kla{f,F'}$ in $\P$, I define that $\kla{f,F}\le_1\kla{f',F'}$ if $\kla{f,F}\le\kla{f',F'}$ and $f=f'$.
\end{defn}

The following lemma implies that the direct extension ordering has the \Prikry{} property, as I shall point out.

\begin{lem}
\label{lem:TechnicalLemmaForGeneralizedPrikryForcing}
Let $\dot{t}$ be a $\P=\P_{\vU,\veta}$-name, $i<\alpha$, $\delta<\kappa_i$ and $\kla{f,F}\in\P$ a condition with
$\kla{f,F}\forces\dot{t}<\check{\delta}$.
Then there is an $F'$ with
 \begin{enumerate}
  \item[(a)] $F'\restriction i=F$ and $\kla{f,F'}\le\kla{f,F}$,
  \item[(b)] If $\kla{h,H}\le\kla{f,F'}$ and
     $\kla{h,H}\forces\dot{t}=\check{\eta}$, then
     $\kla{h,H}_0^i\verl\kla{f,F'}_i^\alpha\forces
       \dot{t}=\check{\eta}. $
 \end{enumerate}
\end{lem}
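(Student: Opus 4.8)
The statement to prove is Lemma~\ref{lem:TechnicalLemmaForGeneralizedPrikryForcing}: a "fusion-free" direct-extension decision lemma for generalized \Prikry{} forcing, which should be the engine behind the \Prikry{} property for $\P_{\vU,\veta}$.

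\medskip

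\textbf{The plan.} The idea is to shrink the measure-one sets $F(j)$ for $j\ge i$ by a single ultrapower argument at coordinate $i$, iterated through the (finitely many, in each relevant bounded stretch) coordinates, so that the value forced for $\dot t$ becomes insensitive to the stem below coordinate $i$. Since $\dot t$ is forced by $\kla{f,F}$ to be a fixed ordinal below $\check\delta<\check\kappa_i$, the plan is: for each potential one-point extension of the stem of $\kla{f,F}$ at a coordinate $\ge i$, ask which value of $\dot t$ (if any) is forced, and use normality of the $U_j$'s ($j\ge i$) to stabilize this. Concretely, I would first reduce to the case where $\kla{f,F}$ actually forces $\dot t=\check\eta_0$ for some specific $\eta_0<\delta$ — otherwise strengthen to such a condition by the \Prikry-type density of decided conditions, or rather: note that I cannot yet assume the \Prikry{} property (that is what this lemma proves), so instead I keep $\dot t$ a name and work directly. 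Define, for a condition $\kla{h,H}\le\kla{f,F}$ with stem $h$ extending $f$ only at coordinates $\ge i$ by finitely many points, the (partial) function recording the value $\check\eta$ with $\kla{h,H}\forces\dot t=\check\eta$ when such exists; then use the normal measures $U_j$, $j\ge i$, to diagonalize: thin $F(j)$ so that whether a given ordinal $\xi\in F(j)$, appended to the current stem, leads to a condition forcing a given value, depends only on the stem restricted to coordinates $<i$ being exactly $f$ — i.e. the part of the stem living in $[\kappa_i,\kappa_\alpha)$ determines everything. Because $\delta<\kappa_i$ and each $\kappa_j$ for $j\ge i$ is inaccessible (indeed measurable), there are fewer than $\kappa_i$ many possible values of $\dot t$ below $\delta$ and the required diagonal intersections and Rowbottom-style homogeneity thinnings are available inside the relevant $U_j$'s.

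\medskip

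\textbf{Key steps, in order.} (1) Set up notation: write $\kla{h,H}_0^i$ for the "lower part" of a condition (stem and measures at coordinates $<i$) and $\kla{f,F'}_i^\alpha$ for the "upper part" at coordinates $\ge i$, and observe that $\kla{h,H}_0^i\verl\kla{f,F'}_i^\alpha$ is a legitimate condition whenever the stems are compatible in the obvious sense. (2) Requirement~(a) is the trivial part: we will only ever shrink $F(j)$ for $j\ge i$, leaving $F\rest i$ untouched, so $F'\rest i=F\rest i=F\rest i$ and $\kla{f,F'}\le\kla{f,F}$ automatically. (3) The heart is constructing $F'$ so that~(b) holds. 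Proceed by a finite recursion on how many coordinates $\ge i$ the stem $h$ of a potential extension $\kla{h,H}$ differs from $f$, mirroring the structure of Lemmas~\ref{lem:DiagonalizingWRTFiniteSequencesLocally} and~\ref{lem:DiagonalizingWRTFiniteSequencesGlobally}, but now for the single name $\dot t$: for each finite pattern of coordinates and each choice of new points drawn from the current $F(j)$'s, use normality to thin so that the value forced for $\dot t$ (if decided) is the same regardless of which points were chosen, hence equals the value forced after moving those points into the "tail" part $\kla{f,F'}_i^\alpha$. (4) Finally verify~(b): given $\kla{h,H}\le\kla{f,F'}$ with $\kla{h,H}\forces\dot t=\check\eta$, the stem $h$ above $f$ at coordinates $\ge i$ was drawn from $F'$, so by construction the same value $\check\eta$ is forced by the condition obtained by keeping $\kla{h,H}$'s lower part and replacing its upper part by $\kla{f,F'}_i^\alpha$; this is exactly $\kla{h,H}_0^i\verl\kla{f,F'}_i^\alpha\forces\dot t=\check\eta$.

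\medskip

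\textbf{Main obstacle.} The delicate point is the bookkeeping of the diagonalization: one must thin the sets $F(j)$ for \emph{all} $j$ in the relevant range simultaneously, in a way that handles every finite extension pattern of the stem at once, and one must be careful that the "upper part" $\kla{f,F'}_i^\alpha$ one extracts at the end is a single fixed condition not depending on $\kla{h,H}$. This is exactly the kind of simultaneous diagonal-intersection / normality argument used by Magidor and in the usual proof of the \Prikry{} property for generalized \Prikry{} forcing in \cite{Fuchs:COPS}; since $\dot t$ is forced to take a value in the bounded set $\delta<\kappa_i$, the number of values and patterns to stabilize against is $<\kappa_i$, which is below the critical points of all the measures $U_j$ with $j\ge i$, so all the required intersections stay measure-one. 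I expect the argument to be essentially a transcription of the standard \Prikry-property proof, with the extra observation — the new content of~(b) — that the thinning can be done uniformly in the lower stem, which is precisely what makes the forced value of $\dot t$ survive the surgery of replacing the tail. Once this is in place, the \Prikry{} property for $\P_{\vU,\veta}$ follows by the usual argument: given $\phi$, pick a name $\dot t$ coding a first index where a fixed enumeration of dense sets is met, or more directly iterate the lemma over the finitely many "new" coordinates below a bound to reach a direct extension deciding $\phi$.
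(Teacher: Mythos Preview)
Your overall strategy is correct and matches the paper's: shrink only the $F(j)$ for $j\ge i$, use Rowbottom-style homogeneity at each coordinate to stabilize the value of $\dot t$, and observe that the resulting $F'$ makes the forced value depend only on the lower part $\kla{h,H}_0^i$. However, your organization of the recursion and your cardinality bookkeeping have a real gap.

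You propose a \emph{finite} recursion on the number of new stem coordinates, mirroring Lemmas~\ref{lem:DiagonalizingWRTFiniteSequencesLocally} and~\ref{lem:DiagonalizingWRTFiniteSequencesGlobally}, and you justify the intersections by saying ``the number of values and patterns to stabilize against is $<\kappa_i$, which is below the critical points of all the measures $U_j$ with $j\ge i$.'' This is the wrong bound. Those two lemmas are for Magidor forcing, where the length $\alpha$ of the sequence is below every measurable involved, so intersecting over all finite coordinate-patterns costs $<\kappa_0$. Generalized \Prikry{} forcing carries no such assumption: $\alpha$ may well exceed $\kappa_i$ (the short case $\alpha<\kappa_0$ is singled out only later, in Theorem~\ref{thm:OneHalfOfB-DforGeneralizedPrikry}). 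At coordinate $\nu>i$ you must thin $F(\nu)$ against \emph{every} possible lower part $q=\kla{h,H}_0^\nu$, not just against data bounded by $\kappa_i$; the number of such $q$ is $|\P\rest[0,\nu)|$, which is $<\kappa_\nu$ (by discreteness of $\vkappa$ and inaccessibility of $\kappa_\nu$) but in general not $<\kappa_i$.

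The paper's proof therefore organizes the construction differently: it runs a \emph{transfinite} recursion on $\nu\in[i,\alpha]$, producing a descending chain $\kla{f,F_\nu}$. At stage $\nu+1$ one lets $\Gamma_\nu=\{q\in\P\rest[0,\nu)\mid q\le\kla{f,F_\nu}_0^\nu\}$, defines for each $q\in\Gamma_\nu$ a partition $l^\nu_q:[F_\nu(\nu)]^{<(1+\eta_\nu)}\to\delta+1$ recording the least value forced (or $\delta$ if none), takes a homogeneous $X^\nu_q\in U_\nu$, and then intersects over all $q\in\Gamma_\nu$ together with a diagonal intersection of the witnessing tails. The point you are missing is precisely that $|\Gamma_\nu|<\kappa_\nu$, which is what makes this intersection stay in $U_\nu$; discreteness is essential here and you never invoke it. One then sets $F'=F_\alpha$ and argues (by a short induction on $\max\{\mu\ge i\mid \unt h(\mu)\supsetneq\unt f(\mu)\}$, not on the number of such $\mu$) that any deciding $\kla{h,H}\le\kla{f,F_\alpha}$ can have its upper part replaced by $\kla{f,F_\alpha}_i^\alpha$ without changing the decision.
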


\begin{proof} I recursively define a sequence
$ \kla{F_\mu\; |\; i\le\mu\le\alpha} $ with the following properties:
\begin{enumerate}
 \item[(I)] $\forall i\le\mu\le\nu\le\alpha\quad\kla{f,F_\nu}
    \le\kla{f,F_\mu}$ and $F_\nu\restriction\mu=F_\mu\restriction\mu$.
 \item[(II)] If $\kla{h,H}\le\kla{f,F_{\nu+1}},\
    \kla{h,H}\forces\dot{t}=\check{\eta} $ and
  $\nu=\max\{j<\alpha\st i\le j\ \text{and}\ \unt{h}(j)\rsupset\unt{f}(j)\}$,
    then $\kla{h,H}_0^\nu\verl\kla{f,F_{\nu +1}}_\nu^\alpha\forces
    \dot{t} = \check{\eta}. $
\end{enumerate}
To start off, set $F_i=F$.
If $\kla{F_\mu\st i\le\mu\le\nu}$ has been defined already and
$\nu+1\le\alpha $ then define $F_{\nu+1}$ as follows. Let
\[ \Gamma_\nu=\{p\in\P\rest[0,\nu)\st p\le\kla{f,F_\nu}_0^\nu\}. \]
Because $\vkappa$ is discrete, the cardinality of $\P\rest[0,\nu)$, and hence of $\Gamma_\nu$, is less than $\kappa_\nu$.

Now, for $q\in\Gamma_\nu$, define a function
\[ l^\nu_q:[F_\nu(\nu)]^{<(1+\eta_\nu)}\To\delta \]
as follows:
\[ l^\nu_q(a)=
 \left \{
  \begin{array}{l@{\qquad}p{10cm}}
   \beta & if $\beta$ is the least $\mu<\delta$ such that there is a condition
    $\kla{h,H}$ with
    \begin{enumerate}
     \item  $ \kla{h,H}_0^\nu=q\ \text{and}\ \kla{h,H}\le\kla{f,F_\nu}. $
     \item  $ \kla{h,H}\forces\dot{t}=\check{\mu}. $
     \item  $ \unt{h}(\nu)=\unt{f}(\nu)\cup a. $
     \item  $ h\restriction(\alpha\setminus(\nu +1)) =
      f\restriction(\alpha\setminus(\nu +1)). $
    \end{enumerate} \\
   \delta &  if there is no $\mu$ with the above properties.
  \end{array}
 \right.
\]
If $l^\nu_q(a)<\delta$, then let $\kla{h^\nu_{q,a},H^\nu_{q,a}}$ be a witness, that is, let it be chosen so that 1.-4.~from the above definition are satisfied.
Otherwise, let $H^\nu_{q,a}=F_\nu$.

Let $X^\nu_q\subseteq F_\nu(\nu)$ be homogeneous for $l^\nu_q$, $X^\nu_q\in U_\nu$, and define $F_{\nu+1}$ by
\[F_{\nu+1}(\beta)=
 \left\{
  \begin{array}{l@{\qquad}l}
   F_\nu(\beta)&\text{if}\ \beta<\nu \\[0.5ex]
   \bigcap\limits_{q\in\Gamma_\nu}\bigl(X^\nu_q\cap\!\!\!
    \diag\limits_{b\in [F_\nu(\nu)]^{<(1+\eta_\nu)}}\!\! H^\nu_{q,b}(\beta)\bigr)
    &{\rm if}\;\beta=\nu \\
   \bigcap\limits_{q\in\Gamma_\nu}\bigcap\limits_{b\in
    [F_\nu(\nu)]^{<(1+\eta_\nu)}}\!\! H^\nu_{q,b}(\beta)&{\rm if}\;
    \nu<\beta<\alpha.
  \end{array}
 \right.
\]
Obviously, $\kla{F_\mu\; |\; i\le\mu\le\nu+1}$ satisfies condition (I).

In order to see that (II) holds, suppose  $\kla{h,H}\le\kla{f,F_{\nu +1}}$,
$i\le\nu =\max\{ j<\alpha\;|\;\unt{h}(j)\rsupset\unt{f}(j)\}$ and
$\kla{h,H}\forces\dot{t} = \check{\eta}$.
Let $a=h(\nu)\setminus\unt{f}(\nu)$. Note that $a\neq\leer$. Since
$\unt{f}(\nu)\subseteq\min F(\nu)$, it follows that for
$ b\in [F_{\nu +1}(\nu)]^{|a|}$, $|h(\nu)|=|\unt{f}(\nu)\cup b|$. I claim first that

\claim{$(1)$}{$\forall b\in[F_{\nu +1}(\nu)]^{|a|}\quad
  \kla{h,H}_0^\nu{\frown\atop }\kla{f[\nu\mapsto\unt{f}(\nu)\cup b],
  F_{\nu +1}[\nu\mapsto F_{\nu +1}(\nu)\setminus{\rm lub}(b)]}_\nu^\alpha
  \Vdash\dot{t}=\check{\eta}$.}

\prooff{(1)} Let $q=\kla{h,H}_0^\nu\in\Gamma_\nu$.
Since $\kla{h,H}\le\kla{f,F_{\nu+1}}$, it follows that $a\subseteq F_{\nu+1}(\nu)$.

We get that $ l^\nu_q(a)=\eta$, because otherwise, let
$ l^\nu_q(a) =\eta'\neq\eta$. Then there would be a condition $\kla{h',H'}=\kla{h^\nu_{q,a},H^\nu_{q,a}} $ satisfying 1.-4. But this implies that
$h=h'$. This means that $\kla{h,H}$ and $\kla{h',H'}$ are compatible, so they cannot force contradictory statements.

Now, let $b\in[F_{\nu +1}(\nu)]^{|a|}$. Since $a\subseteq F_{\nu +1}(\nu)
\subseteq X^\nu_q$, $|a|=|b|$ and $ X^\nu_q $ is homogeneous for
$l^\nu_q$, it follows that
\[ l^\nu_q(a)=l^\nu_q(b)=\eta. \]
Hence, $\kla{h^\nu_{q,b},H^\nu_{q,b}}$ satisfies 1.~- 4.~ with respect to $\eta$. We get:

\[\kla{h,H}_0^\nu\verl\kla{f[\nu\mapsto\unt{f}(\nu)\cup b],
   F_{\nu +1}[\nu\mapsto F_{\nu +1}(\nu)\setminus{\rm lub}(b)]}_\nu^\alpha
   \le\kla{h^\nu_{q,b},H^\nu_{q,b}}\]

because the first components coincide, for $j<\nu$,
$H(j)=H^\nu_{q,b}(j)$, and for $\nu<j<\alpha$,
$F_{\nu +1}(j)\sub H^\nu_{q,b}$, by definition. It remains to check that
\[ F_{\nu +1}[\nu\mapsto F_{\nu +1}(\nu)\setminus{\rm lub}(b)](\nu)
  = F_{\nu +1}(\nu)\setminus{\rm lub}(b)\subseteq H^\nu_{q,b}(\nu). \]
To see this, let $\gamma\in F_{\nu+1}(\nu)\setminus{\rm lub}(b)$. Then,
$ \gamma\ge\lub(b)$, and by definition of $F_{\nu+1}$,
$F_{\nu+1}(\nu)\subseteq
 \diag_{c\in [F_\nu(\nu)]^{<(1+\eta_\nu)}}H^\nu_{q,c}(\nu)$.
So, $\gamma\in H^\nu_{q,b}(\nu)$, by the meaning of the diagonal intersection.

Since $\kla{h^\nu_{q,b},H^\nu_{q,b}}\Vdash\dot{t}=\check{\eta}$, this implies the claim. \qedd{(1)}

For (II), it remains to show that $\kla{h,H}_0^\nu\verl\kla{f,F_{\nu+1}}_\nu^\alpha\forces
 \dot{t}=\check{\eta}. $
But this follows because (1) implies that $ \{p\;|\; p\Vdash \dot{t}=\check{\eta}\} $ is dense below $ \kla{h,H}_0^\nu{\frown\atop }\kla{f,F_{\nu+1}}_\nu^\alpha$.

If $\lambda\le\alpha$ is a limit and
$\kla{F_\mu\st i\le\mu<\lambda}$ has been defined, set
\[ F_\lambda(\beta)\mdf
 \left\{
  \begin{array}{l@{\qquad}l}
   F_i(\beta)&{\rm if}\;\beta<i\\
   F_{\beta+1}(\beta)&{\rm if}\;i\le\beta<\lambda\\
   \bigcap_{i\le\gamma<\lambda}F_\gamma(\beta)&{\rm if}\;
                                     \lambda\le\beta<\alpha.
  \end{array}
 \right.
\]
It is then obvious that (I) and (II) are satisfied, completing the definition of
$\kla{F_\mu\st i\le\mu\le\alpha}$.

\claim{(2)}{If $i\le\nu<\alpha,$ $\kla{h,H}\le\kla{f,F_\alpha},$ $\nu=\max\{\mu<\alpha\st\unt{h}(\mu)\rsupset\unt{f}(\mu)\}$ and
$\kla{h,H}\forces\dot{t}=\check{\eta}$, then
$\kla{h,H}_0^\nu\verl\kla{f,F_\alpha}_\nu^\alpha\forces\dot{t}=\check{\eta}$.}

\prooff{(2)} Since $\kla{h,H}\le\kla{f,F_\alpha}\le\kla{f,F_{\nu+1}}$, it follows by (II) at stage $\nu+1$ that
$\kla{h,H}_0^\nu\verl\kla{f,F_{\nu+1}}_\nu^\alpha
  \Vdash\dot{t}=\check{\eta}$. But then by (I),
$\kla{h,H}_0^\nu\verl\kla{f,F_\alpha}_\nu^\alpha \le
\kla{h,H}_0^\nu\verl\kla{f,F_{\nu+1}}_\nu^\alpha$, which implies the claim.
\qedd{(2)}

\claim{(3)}{Let $ \kla{h,H}\le\kla{f,F_\alpha}$, $i\le j<\alpha$,
$|\unt{f}(j)|<\eta_j$ and $\kla{h,H}\forces\dot{t}=\check{\eta}$.
Then $\kla{h,H}_0^j\verl\kla{f,F_\alpha}_j^\alpha\forces\dot{t}=\check{\eta}$.}

\prooff{(3)} Assume the contrary. Let $j$ be such that
$|f(j)|<\eta_j $, and assume $\kla{h,H}$ is such that
\begin{enumerate}
 \item[(a)] $ \kla{h,H}\le\kla{f,F_\alpha}. $
 \item[(b)] $ \kla{h,H}\Vdash\dot{t}=\check{\eta}. $
 \item[(c)] $ \kla{h,H}_0^j\verl\kla{f,F_\alpha}_j^\alpha
   \not\Vdash\dot{t}=\check{\eta}. $
 \item[(d)] $ |h(j)|>|\unt{f}(j)|. $
 \item[(e)] $ \nu=\max\{\mu<\alpha\;|\;\unt{h}(\mu)\rsupset\unt{f}(\mu)\} $ is
  minimal with (a)-(d).
\end{enumerate}
By (2), we have that
\[\kla{h',H'}\mdf\kla{h,H}_0^\nu\verl\kla{f,F_\alpha}_\nu^\alpha\forces
  \dot{t}=\check{\eta}. \]
If $\nu=j$, the proof is complete. By (d), $\nu\ge j$, so assume
$\nu>j$. But then $\kla{h',H'}$ satisfies (a)-(d), but
\[ \nu'=\max\{\mu<\alpha\;|\;\unt{h'}(\mu)\rsupset\unt{f}(\mu)\}<\nu, \]
contradicting the minimality of $\nu$. \qedd{(3)}

\claim{(4)}{Let $\kla{h,H}\le\kla{f,F_\alpha}$ and $\kla{h,H}\forces\dot{t}=\check{\eta}$. Then $\kla{h,H}_0^i\verl\kla{f,F_\alpha}_i^\alpha\forces\dot{t}=\check{\eta}$.}

\prooff{(4)} Let
 $j=\min\{\mu<\alpha\;|\;i\le\mu\;\wedge\;|\unt{f}(\mu)|<\eta_\mu\}$, if this exists, and set $j=\alpha$ otherwise. Then
\claim{($*$)}{$\kla{h,H}_0^j\verl\kla{f,F_\alpha}_j^\alpha\forces\dot{t}=\check{\eta}$.} If $j<\alpha$, then this is just claim (3), and if $j=\alpha$, then the condition on the left is just $\kla{h,H}$.
Moreover, $|h(l)|=|f(l)|=\eta_l$, and so, $ h(l)=f(l) $ for all
$ l\in[i,j)$, by definition of $ j$. The values of the second component of a condition at places where the first component is completely determined are irrelevant, so we get that
\[ \kla{h,H}_0^i\verl\kla{f,F}_i^\alpha\le
   \kla{h,H}_0^j\verl\kla{f,F}_j^\alpha. \]
Using $(*)$, this proves (4), and thus the lemma (setting $F'=F_\alpha$).
\end{proof}

\begin{thm}
\label{thm:PureDecisionForGeneralizedPrikryForcing}
The generalized \Prikry{} forcing $\P=\P_{\veta,\vU}$, together with the direct extension ordering, satisfies the \Prikry{} property.
\end{thm}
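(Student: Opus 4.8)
The plan is to obtain the \Prikry{} property as a routine consequence of the pure-decision Lemma~\ref{lem:TechnicalLemmaForGeneralizedPrikryForcing}, which carries essentially all the weight. Given a statement $\phi$ in the forcing language and a condition $p=\kla{f,F}\in\P$, I would first pass from $\phi$ to a $\P$-name for an ordinal: fix $\dot t$ with $\eins\forces(\phi\to\dot t=\check 0)\wedge(\neg\phi\to\dot t=\check 1)$, so that every condition forces $\dot t<\check 2$. Thus the hypothesis $\kla{f,F}\forces\dot t<\check\delta$ of the lemma is met with $\delta=2$, since $2<\kappa_i$ for every $i<\alpha$.

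I would then want to apply the lemma at an index $i$ for which the ``stem part'' $\kla{h,H}_0^i$ occurring in its clause~(b) becomes harmless. If $\alpha=0$, or if every coordinate of $f$ is full (possible, for $\alpha\ge1$, only when $\alpha$ is a finite successor, every $\eta_j$ is finite, and $\dom(f)=\alpha$), then $p$ has no proper extension and already decides $\phi$, so there is nothing to do. Otherwise, let $i<\alpha$ be least with $|f(i)|<1+\eta_i$, and apply Lemma~\ref{lem:TechnicalLemmaForGeneralizedPrikryForcing} to $\dot t$, this $i$, $\delta=2$, and $\kla{f,F}$, obtaining $F'$ with $F'\restriction i=F\restriction i$ and $\kla{f,F'}\le\kla{f,F}$; as the stem is unchanged, $\kla{f,F'}\le_1 p$. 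Because every coordinate below $i$ is full, any $\kla{h,H}\le\kla{f,F'}$ agrees with $f$ on $[0,i)$ and carries no measure-one sets there, so $\kla{h,H}_0^i$ is one fixed condition, depending only on $f$, and $\kla{h,H}_0^i\verl\kla{f,F'}_i^\alpha=\kla{f,F'}$. Hence clause~(b) of the lemma reads: whenever $\kla{h,H}\le\kla{f,F'}$ forces $\dot t=\check\eta$, then already $\kla{f,F'}\forces\dot t=\check\eta$.

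To conclude, I would observe that the conditions deciding $\dot t$ are dense below $\kla{f,F'}$ -- any extension that decides $\phi$ forces $\dot t$ to equal $\check 0$ or $\check 1$ -- pick any $\kla{h,H}\le\kla{f,F'}$ with $\kla{h,H}\forces\dot t=\check\eta$ for some $\eta\in\{0,1\}$, and deduce $\kla{f,F'}\forces\dot t=\check\eta$ from the collapsed clause~(b). Then $\kla{f,F'}$ decides $\phi$ and $\kla{f,F'}\le_1 p$, which is exactly what the \Prikry{} property demands. The genuinely hard part -- constructing the measure-one shrinking $F'$ that makes pure decision possible -- has already been surmounted in Lemma~\ref{lem:TechnicalLemmaForGeneralizedPrikryForcing}; in the theorem itself the only point requiring attention is the choice of the index $i$, so that the stem contribution in clause~(b) genuinely drops out and one is left with a statement about $\kla{f,F'}$ alone.
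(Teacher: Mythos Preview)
Your proof is correct and follows exactly the paper's strategy: encode $\phi$ by a $\{0,1\}$-valued name and invoke clause~(b) of Lemma~\ref{lem:TechnicalLemmaForGeneralizedPrikryForcing} at an index $i$ chosen so that $\kla{h,H}_0^i\verl\kla{f,F'}_i^\alpha$ reduces to $\kla{f,F'}$. The paper simply takes $i=0$: then $\kla{h,H}_0^0$ is the restriction to the empty interval $[0,0)$, so the concatenation is literally $\kla{f,F'}$, and no edge-case analysis or discussion of full coordinates is needed. Your route via the least non-full coordinate also works, but note a small imprecision: under the paper's conventions the second component is still formally present on full coordinates (the inequality $|f(i)|<1+\eta_i$ in ordinal arithmetic is in fact always satisfied), so for $i>0$ the identity $\kla{h,H}_0^i\verl\kla{f,F'}_i^\alpha=\kla{f,F'}$ should be read as an equivalence of conditions rather than a literal equality---the measure-one sets on fully determined coordinates are irrelevant, a point the paper itself invokes inside the proof of the lemma.
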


\begin{proof} Let $\phi$ be a formula in the forcing language of $\P$. Let $\dot{t}$ be a name such that $\eins\forces(\phi\iff\dot{t}=\check{0} \land \neg\phi\iff\dot{t}=\check{1})$. Let $p=\kla{f,F}$ be a condition. Applying Lemma \ref{lem:TechnicalLemmaForGeneralizedPrikryForcing} with $i=0$ produces a condition $\kla{f,F'}\le\kla{f,F}$ that decides $\phi$, for let $\kla{h,H}\le\kla{f,F'}$ decide $\phi$. Then $\kla{h,H}_0^0\verl\kla{f,F'}_0^\alpha=\kla{f,F'}$ decides $\phi$ in the same way. \end{proof}

\begin{thm}
\label{thm:GeneralizedPrikryForcingHasStrongPrikryProperty}
The generalized \Prikry{} forcing $\P=\P_{\veta,\vU}$, with the direct extension sub-ordering, satisfies the strong \Prikry{} property.
\end{thm}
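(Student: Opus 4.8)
The plan is to follow the pattern of the proofs for \Prikry{} forcing (Theorem~\ref{thm:PrikryForcingHasStrongPrikryProperty}) and Magidor forcing (Theorem~\ref{thm:MagidorForcingHasStrongPrikryProperty}): verify that the direct extension ordering $\le_1$ on $\P=\P_{\veta,\vU}$ satisfies the \Prikry{} property and is directed, connected and natural, invoke Theorem~\ref{thm:StrongPrikryForNaturalPosets2}, and thereby reduce the problem to showing that every $\le_1$-pressing down function on a maximal antichain of $\P$ is captured.

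The preliminary points are routine. The \Prikry{} property is Theorem~\ref{thm:PureDecisionForGeneralizedPrikryForcing}. Directedness (condition~\ref{item:Directedness-poset} of Definition~\ref{defn:StrongPrikryProperty-poset}) holds because the $\le_1$-extensions of a fixed $\kla{f,F}$ are exactly the $\kla{f,F'}$ with $F'(i)\sub F(i)$ for every $i$, and any two of these have the coordinatewise intersection as a common $\le_1$-lower bound; indeed the set of $\le_1$-extensions of a condition is ${<}\kappa_0$-directed, $\kappa_0$ being the least measurable in $D$. Connectedness (condition~\ref{item:Connectedness-poset}) is immediate, as a condition lying $\le$-between $\kla{f,F'}$ and $\kla{f,F}$ must have stem $f$. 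Naturalness (Definition~\ref{def:Natural}) holds because, given $r=\kla{\leer,R}$ and $a=\kla{s,T}$ with $a||r$, any common extension of $a$ and $r$ witnesses $s(i)\sub R(i)$ for all $i$, so $p=\kla{s,\seq{T(i)\cap R(i)}{i<\alpha}}$ is a condition with $p\le_1a$ and $p\le r$. Hence Theorem~\ref{thm:StrongPrikryForNaturalPosets2} applies, and it remains to show: for every maximal antichain $A\sub\P$ and every $\le_1$-pressing down $f:A\To\P$, some $r\le_1\eins$ captures $f$.

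Write $f(a)=\kla{s_a,Y_a}$, where $s_a$ is the stem of $a$ (legitimate, since $f$ is $\le_1$-pressing down); as $A$ is an antichain, $a\mapsto s_a$ is injective. It is helpful to note that $r$ captures $f$ precisely when $A'=\{f(a)\st a\in A\}$ is pre-dense below $r$: if $A'$ is pre-dense below $r$ and $p\le a_0,r$ with $a_0\in A$, then every $q\le p$ is compatible with some $f(a)$, a common extension of $q$ and $f(a)$ lies below both $a_0$ and $a$, which forces $a=a_0$, so $q$ is compatible with $f(a_0)$; by separativity of $\P$ this gives $p\le f(a_0)$. Thus it suffices to produce a function $R$ with $R(i)\in U_i$ for all $i<\alpha$ such that $r=\kla{\leer,R}$ has the property that whenever $\kla{c,D}\le a,\kla{\leer,R}$ with $a\in A$, then $c(i)\ohne s_a(i)\sub Y_a(i)$ and $D(i)\sub Y_a(i)$ for every $i$ --- for this says exactly $\kla{c,D}\le\kla{s_a,Y_a}=f(a)$, so $A'$ is pre-dense below $r$. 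I would construct $R$ by recursion on the coordinates $\nu<\alpha$, modeled on the construction of the sequence $\seq{F_\mu}{i\le\mu\le\alpha}$ in the proof of Lemma~\ref{lem:TechnicalLemmaForGeneralizedPrikryForcing}, so that at each coordinate $\nu$ the value of $R$ is obtained by a suitable diagonal intersection --- over the finite sets $b$ that can be adjoined at coordinate $\nu$, ranging over $[\,\cdot\,]^{<(1+\eta_\nu)}$, in the manner of the diagonal intersection $\diag_{b}H^\nu_{q,b}$ appearing there --- after homogenizing, via the completeness and Rowbottom/partition properties of the normal measures, the data that $f$ and the conditions on the remaining coordinates contribute. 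Once $R$ has been defined, the displayed inclusions are verified directly, $r=\kla{\leer,R}$ captures $f$, and Theorem~\ref{thm:StrongPrikryForNaturalPosets2} delivers the strong \Prikry{} property.

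The only genuine obstacle is this recursion, and the difficulty is exactly the one already confronted in Lemma~\ref{lem:TechnicalLemmaForGeneralizedPrikryForcing}: a maximal antichain $A$ can be so large that at a fixed coordinate $\nu$ more than $\kappa_\nu$ many members of $A$ are relevant to $\nu$, so one cannot simply intersect the sets $Y_a(\nu)$ --- that intersection may be empty. Discreteness of $D$ is precisely what rescues the construction: it bounds the number of ``lower part'' configurations of a condition on the coordinates below $\nu$ (and the number of possible configurations of the corresponding $f$-data) by $|\P\rest[0,\nu)|<\kappa_\nu$, and, after homogenizing the behaviour at the higher coordinates using the completeness of the measures there, keeps the relevant index set at coordinate $\nu$ of size below $\kappa_\nu$; hence the set assigned to coordinate $\nu$ is a legitimate diagonal intersection and lands in $U_\nu$. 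Coordinates $\nu$ with $\eta_\nu>1$, at which a condition can pick up several points at coordinate $\nu$, are handled by diagonalizing over $[\,\cdot\,]^{<(1+\eta_\nu)}$ instead of over $\kappa_\nu$, exactly as in that lemma.
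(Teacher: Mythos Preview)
Your preliminary reductions are correct and match the paper: the \Prikry{} property, directedness, connectedness and naturalness all hold for the reasons you give, and Theorem~\ref{thm:StrongPrikryForNaturalPosets2} reduces the problem to capturing $\le_1$-pressing down functions on maximal antichains. Your reformulation of capturing as pre-density of $\{f(a)\st a\in A\}$ below $r$ is also correct.

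The gap is in the main construction. A recursion on coordinates $\nu<\alpha$ modeled on Lemma~\ref{lem:TechnicalLemmaForGeneralizedPrikryForcing} does not by itself handle the following situation: for $a\in A$ with $\beta\notin\dom(s_a)$ and $\beta<\max(\dom(s_a))$, you need any common extension $\kla{c,D}\le a,\kla{\leer,R}$ to satisfy $c(\beta)\sub Y_a(\beta)$ and $D(\beta)\sub Y_a(\beta)$. Since $c(\beta),D(\beta)\sub R(\beta)$, this forces $R(\beta)$ to be small relative to $Y_a(\beta)$. But at stage $\beta$ of your recursion, the conditions $a$ with $\max(\dom(s_a))>\beta$ have not yet appeared, and there are in general more than $\kappa_\beta$ of them (their stems can take $\kappa_\mu$ many values at each $\mu>\beta$), so you cannot simply intersect the corresponding $Y_a(\beta)$'s. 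Homogenizing at coordinate $\beta$ does not help, because the relevant sets $Y_a(\beta)$ are indexed by data living at coordinates above $\beta$, not by what can be adjoined at $\beta$. This is a genuinely different difficulty from the one in Lemma~\ref{lem:TechnicalLemmaForGeneralizedPrikryForcing}, where the target is a single bounded name and the role of the higher coordinates can be absorbed by shrinking upward.

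The paper resolves this by a different outer organization: it first reduces to antichains of \emph{uniform stem length} $n$ and proves the capturing statement by induction on $n$. In the inductive step, for each $\gamma$ one homogenizes (using discreteness) the map sending $\xi\in\kappa_\gamma$ to the lower part $f(a)\rest\gamma$ for the unique $a$ with $\max(\dom(s_a))=\gamma$ and $s_a(\gamma)=\xi$; this produces a derived antichain $\bar A$ of uniform length $n-1$ together with a $\le_1$-pressing down $\bar f$, and the inductive hypothesis yields a condition capturing $\bar f$, which then captures $f$. The induction on $n$ is precisely what lets one push the constraint on $Y_a(\beta)$ for $\beta<\max(\dom(s_a))$ down to a shorter antichain where it can be handled. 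Your sketch does not supply an analogue of this step.
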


\begin{proof} The collection of direct extensions of a fixed condition is ${<}\kappa_0$-directed, which gives condition \ref{item:Directedness-poset} of Definition \ref{defn:StrongPrikryProperty-poset}. Condition \ref{item:Connectedness-poset} is immediate (we even get that if $q\le_1 p$ and $q\le q'\le p$, then $q\le_1 q'\le_1 p$), as in the case of the original \Prikry{} forcing. We have just seen that it satisfies pure decision, and we also get that the direct extension ordering is natural, in the sense of Definition \ref{def:Natural}. Namely, let $\alpha$ be the length of $\P$, and assume $r=\kla{\leer,R}$ and $a=\kla{s,T}||r$. Then let $p=\kla{s,T'}$, where $T'(\gamma)=T(\gamma)\cap R(\gamma)$, for $\gamma<\alpha$ with $\gamma\notin\dom(s)$. Then $p\le_1 a$ and $p\le r$, as desired.

I will now turn to the strong \Prikry{} property.
For simplicity, I will focus on the special case of the generalized \Prikry{} forcing where $\eta_i=1$ for all $i<\alpha$. Dealing with the general case mostly adds notational complexity.

By Theorem \ref{thm:StrongPrikryForNaturalPosets2}, in order to conclude that $\P$ has the strong \Prikry{} property, it has to be shown that every $\le_1$-pressing down function defined on a maximal antichain in $\P$ is captured. Let us extend the language of capturing slightly, so as to apply to nonmaximal antichains as well. Thus, let us
say that an antichain $A$ (which does not have to be maximal) is \emph{captured} if for every $\le_1$-pressing down function $f:A\To\P$, there is an $r'\le_1\eins$ such that for all $a\in A$ and all $p\in\P$, if $p$ is a common extension of $a$ and $r'$, then $p\le f(a)$. In this situation, I also say that $r'$ \emph{captures} $A$, $f$.

Say that an antichain in $\P$ has \emph{uniform length} $n<\omega$ if the stem of every condition in the antichain has size $n$. I show by induction on $n$:

\claim{$(*)$}{For every $n<\omega$, every antichain of uniform length $n$ is captured.}

\prooff{$(*)$} The case $n=0$ is trivial, because in that case, the antichain has at most one member, and that condition has the form $\kla{\leer,T}$. Given a pressing-down function $f$, and letting $f(\kla{\leer,T})=\kla{\leer,\bar{T}}$, it is clear that $\kla{\leer,\bar{T}}$ captures $A$, $f$.

So assume now that $A$ has uniform length $n>0$ and that every antichain of uniform length less than $n$ is captured. Let $f:A\To\P$ be a pressing down function, and let $r\le_1\eins$ be such that for every $a\in A$ that's compatible with $r$, $f(a)\le_1 a$.

Let $\gamma<\alpha$. For $\xi<\kappa_\gamma$, let
\[P_\gamma(\xi)=\{\kla{s,Z}\rest\gamma\st\kla{s,Z}\in A\ \text{and}\ s(\gamma)=\xi\}.\]
Since $\sup_{\zeta<\gamma}\kappa_\zeta<\kappa_\gamma$, there are fewer than $\kappa_\gamma$ many possible values for $P_\gamma(\xi)$. So there is a unique value $P(\gamma)$ such that there is a set $X(\gamma)\in U_\gamma$ such that
\[\forall\xi\in X(\gamma)\quad P_\gamma(\xi)=P(\gamma).\]

Further, say that $\gamma$ is \emph{strong} if there is a condition $q\in P(\gamma)$ whose stem has size $n-1$, and in that case, let, for every $\xi\in X(\gamma)$, $A(q,\xi)$ be the unique condition $\kla{s,T}$ such that
\begin{enumerate}
  \item $\kla{s,T}\in A$,
  \item $\kla{s,T}\rest\gamma=q$,
  \item $s(\gamma)=\xi$.
\end{enumerate}
There is such an $\kla{s,T}$, because $q\in P(\gamma)=P_\gamma(\xi)$, and if  $\kla{s,T}$ and $\kla{s',T'}$ are like that, then it follows that $s=s'$, so they are compatible, so they are equal, because they both come from the antichain $A$. To be clear, $A(q,\xi)$ is only defined in case $q\in P(\gamma)$, the stem of $q$ has size $n-1$, and $\xi\in X(\gamma)$.

Define, for any $\gamma<\alpha$, $Y^\gamma\in\prod_{\zeta<\alpha}U_\zeta$ as follows. If $\zeta\le\gamma$ or $\gamma$ is not strong, then set
\[Y^\gamma(\zeta)=\kappa_\zeta\]
If, on the other hand, $\gamma$ is strong and $\gamma<\zeta<\alpha$, then set
\[Y^\gamma(\zeta)=\bigcap\{T(\zeta)\st\exists q\in P(\gamma)\exists\xi\in X(\gamma)\exists s\quad A(q,\xi)=\kla{s,T}\}\]


Define $Z\in\prod_{\gamma<\alpha}U_\gamma$ by
\[Z(\gamma)=X(\gamma)\cap\bigcap_{\delta<\gamma}Y^\delta(\gamma)\]

Now, if $\gamma$ is strong, then define $A_\gamma$ to consist of those conditions $\kla{s,T}$ such that $\dom(s)\sub\gamma$, $\dom(s)$ has size $n-1$, for $\gamma\le\zeta<\alpha$, $T(\zeta)=Z(\zeta)$, and
for which there is a condition $\kla{s',T'}$ such that the following properties are satisfied:
\begin{enumerate}
  \item $\kla{s',T'}$ is compatible with $\kla{\leer,Z}$,
  \item $\kla{s,T}\rest\gamma=\kla{s',T'}\rest\gamma$
  \item $\kla{s',T'}\in A$ and $\gamma=\max(\dom(s'))$
\end{enumerate}
If $\kla{s,T}\in A_\gamma$ and $\kla{s',T'}$ is as above, then I say that $\kla{s',T'}$ is a witness that $\kla{s,T}\in A_\gamma$. Note that if $\kla{s,T}\in A_\gamma$, then there are many witnesses for this. In fact, for every $\xi\in Z(\gamma)$, there is a witness $\kla{s',T'}$ with $s'(\gamma)=\xi$.

\claim{$(a)$}{If $\gamma$ is strong, then $A_\gamma$ is an antichain in $\P$.}
\prooff{(a)} Suppose $\kla{\bar{s},\bar{T}}$, $\kla{\tilde{s},\tilde{T}}\in A_\gamma$, $\kla{\bar{s},\bar{T}}\neq\kla{\tilde{s},\tilde{T}}$. It follows that
$\kla{\bar{s},\bar{T}}\rest\gamma\neq\kla{\tilde{s},\tilde{T}}\rest\gamma$.
Let $\kla{\bar{s}',\bar{T}'}$, $\kla{\tilde{s}',\tilde{T}'}\in A$ be witnesses for
$\kla{\bar{s},\bar{T}}$, $\kla{\tilde{s},\tilde{T}}\in A_\gamma$. By the previous remark, we may choose the witnesses so that $\bar{s}'(\gamma)=\xi=\tilde{s}'(\gamma)$. These witnesses are different, and since they are equal at the $\gamma$-th coordinate and the domains of their stems are contained in $\gamma+1$, it follows that $\kla{\bar{s}',\bar{T}'}\rest\gamma$ and $\kla{\tilde{s}',\tilde{T}'}\rest\gamma$ are incompatible. The claim follows since
$\kla{\bar{s}',\bar{T}'}\rest\gamma=\kla{\bar{s},\bar{T}}$ and
$\kla{\tilde{s}',\tilde{T}'}\rest\gamma=\kla{\tilde{s},\tilde{T}}\rest\gamma$. \qedd{(a)}


\claim{(b)}{If $\gamma<\delta$ are strong, $p\in A_\gamma$ and $q\in A_\delta$, then $p$ and $q$ are incompatible.}

\prooff{(b)} Let $p=\kla{\bar{s},\bar{T}}\in A_\gamma$, $q=\kla{\tilde{s},\tilde{T}}\in A_\delta$. Let $\kla{\bar{s}',\bar{T}'}\in A$ witness that $p\in A_\gamma$ and let
$\kla{\tilde{s}',\tilde{T}'}\in A$ witness that $q\in A_\delta$.

First, note that $\tilde{s}'(\delta)\in\bar{T}'(\delta)$, because $Y^\gamma(\delta)\sub\bar{T}'(\delta)$, $Z(\delta)\sub Y^\gamma(\delta)$, and
$\kla{\tilde{s}',\tilde{T}'}$ is compatible with $\kla{\leer,Z}$. Similarly, for all $\zeta\in(\gamma,\delta)$, if $\zeta\in\dom(\tilde{s}')$, it follows that $\tilde{s}'(\zeta)\in\bar{T}'(\zeta)$.

Now, suppose that $\gamma\notin\dom(\tilde{s}')$. Then $\tilde{T}'(\gamma)\in U_\gamma$ is defined, and we may choose $\kla{\bar{s}',\bar{T}'}$ so that $\bar{s}'(\gamma)\in\tilde{T}'(\gamma)$, by the remark preceding claim (1.1). On the other hand, if $\gamma\in\dom(\tilde{s})$, then $\tilde{s}(\gamma)\in Z(\gamma)\sub X(\gamma)$, and so, we may choose $\kla{\bar{s}',\bar{T}'}$ so that $\bar{s}'(\gamma)=\tilde{s}(\gamma)$.

But $\kla{\bar{s}',\bar{T}'}$ and $\kla{\tilde{s}',\tilde{T}'}$ are distinct members of an antichain, so they have to be incompatible. The coordinate responsible for this incompatibility must be less than $\gamma$, by the previous analysis. But this means then that $p$ and $q$ are already incompatible, in fact, $p\rest\gamma$ and $q\rest\gamma$ are incompatible. \qedd{(b)}

In particular, if we let
\[\bA=\bigcup_{\gamma\ \text{strong}}A_\gamma\]
then $\bA$ is an antichain in $\P$ of uniform length $n-1$.
I would like to define a suitable pressing down function on $\bA$. To this end, let $\gamma$ be strong. For every $\xi\in Z(\gamma)$, I can define a function $f^\gamma_\xi:A_\gamma\rest\gamma\To\P\rest\gamma$ by letting $f^\gamma_\xi(p)=f(\kla{s,Z})\rest\gamma$, where $\kla{s,Z}\in A$ is unique with $\kla{s,Z}\rest\gamma=p$ and $s(\gamma)=\xi$. There are fewer than $\kappa_\gamma$ many functions from $\P\rest\gamma$ to $\P\rest\gamma$, so there is a set $R(\gamma)\in U_\gamma$ such that $f^\gamma_\xi$ is the same function for all $\xi\in R(\gamma)$. If $\gamma$ is not strong, then let $R(\gamma)=\kappa_\gamma$.
By shrinking further, if necessary, we may assume that $\kla{\leer,R}\le r$, so that $f(p)\le_1 p$ for all $p\in A$ with $p||\kla{\leer,R}$.
Finally, let $\bA'\sub\bA$ consist of those conditions in $\bA'$ that are compatible with $\kla{\leer,R}$.

Define $\barf:\bA'\To\P$ as follows. Let $p=\kla{s,T}\in\bA'$. By (2), the antichains $A_\gamma$, where $\gamma$ is strong, are pairwise incompatible, in particular, they are pairwise disjoint. So there is a unique strong $\gamma$ with $p\in A_\gamma$. For $\xi\in R(\gamma)\cap Z(\gamma)$, let $\kla{s^\xi,T^\xi}\in A$ be the unique condition with $\kla{s^\xi,T^\xi}\rest\gamma=p$ and $s^\xi(\gamma)=\xi$. Note that $\kla{s^\xi,T^\xi}$ is compatible with $\kla{\leer,R}$, because $\dom(s^\xi)\sub\gamma+1$. So $f(\kla{s^\xi,T^\xi})\le_1\kla{s^\xi,T^\xi}$.
Moreover, for every such $\xi$, $f(\kla{s^\xi,T^\xi})\rest\xi=f^\gamma_\xi(p)$ is the same, since $\xi\in R(\gamma)$. Let $f(\kla{s^\xi,T^\xi})=\kla{s^\xi,\bar{T}^\xi}$. Define $\barf(p)=\kla{s,\bar{T}}$, where, for $\zeta<\alpha$ with $\zeta\notin\dom(s)$,
\[ \bar{T}(\zeta)=
\left\{
\begin{array}{l@{\qquad}l}
  \bar{T}^\xi(\zeta),\ \text{for any and all}\ \xi\in R(\gamma) & \text{if}\ \zeta<\gamma\\
  \bigcap\limits_{\xi\in R(\gamma)}\bar{T}^\xi(\zeta) & \text{if}\ \gamma<\zeta<\alpha\\
  T(\gamma)(=Z(\gamma)) & \text{if}\ \zeta=\gamma.
\end{array}
\right.
\]
Thus, $\barf:\bA'\To\P$ is a pressing down function such that for all $p\in\bA'$ (with $p||\kla{\leer,R}$), $\barf(p)\le_1 p$. Since $\bA'$ has uniform length $n-1$, inductively, we know that there is a condition $\kla{\leer,R'}$ such that for all $p\in\bA'$ and for every condition $q$ which is a common extension of $p$ and $\kla{\leer,R'}$, we have that $q\le\barf(p)$. We may assume that $\kla{\leer,R'}\le\kla{\leer,R},\kla{\leer,Z}$, by shrinking, if necessary.

\claim{(c)}{$\kla{\leer,R'}$ captures $A$,$f$.}

\prooff{$(c)$}
Let $p\in A$, and let $q\le p,\kla{\leer,R'}$. Let $p=\kla{s',T'}$, let $\gamma=\max(\dom(s'))$, and $\xi=s'(\gamma)$. Then $s'(\gamma)\in Z(\gamma)\cap R(\gamma)$, so $\gamma$ is strong, and $\kla{s',T'}$ witnesses that the condition $\kla{s,T}$ is in $A_\gamma$, where $\kla{s,T}\rest\gamma=\kla{s',T'}\rest\gamma$ and for all $\zeta\in[\gamma,\alpha)$, $T(\zeta)=Z(\zeta)$. It follows that $q\le\kla{s,T}$. To see this, let $q=\kla{t,V}$. First note that $q\rest\gamma\le\kla{s,T}\rest\gamma$, since $\kla{s,T}\rest\gamma=p\rest\gamma$ and $q\le p$. We have that $t(\gamma)=s'(\gamma)=\xi$, since $q\le p$. Now suppose $\gamma<\zeta<\alpha$. If $\zeta\in\dom(t)$, then we get that $t(\zeta)\in R'(\zeta)\sub Z(\zeta)=T(\zeta)$, since $\kla{t,V}\le\kla{\leer,R'}\le\kla{\leer,Z}$. Otherwise, $V(\zeta)\sub R'(\zeta)\sub Z(\zeta)=T(\zeta)$. This shows that $q\le\kla{s,T}$.

So we have that $q\le\kla{s,T},\kla{\leer,R'}$, where $\kla{s,T}\in\bA'$. Since $\kla{\leer,R'}$ captures $\bA'$, $\barf$, this implies that $q\le\barf(\kla{s,T})$. I want to conclude that $q\le f(\kla{s',T'})$. Since $\xi=s'(\gamma)\in R'(\gamma)$, we know that $f(\kla{s',T'})\rest\gamma=\barf(\kla{s,T})\rest\gamma$, and so, $q\rest\gamma\le f(\kla{s',T'})\rest\gamma$. Moreover, $t(\gamma)=s'(\gamma)$. And $\barf(\kla{s,T})\rest(\gamma,\alpha)\le f(\kla{s',T'})$ (see the second part of the definition of $\bar{T}$ above). So all in all, it follows that $q\le f(\kla{s',T'})$, as desired. \qedd{(c)}

This proves claim $(*)$. \qedd{(*)}

Now, I can prove the general case, where the antichain at hand may fail to have uniform length. So let $A\sub\P$ be an antichain and let $f:A\To\P$ be a $\le_1$-pressing down function
For $n<\omega$, let $A^n$ be the set of conditions in $A$ whose stem has length $n$. $A^n$ is then an antichain of uniform length $n$, and, letting $f^n=f\rest A^n$, $f^n$ is a $\le_1$-pressing down function on $A^n$.
So by claim $(*)$, there is a condition $\kla{\leer,R^n}$ that captures $f^n$. Let $R'(\gamma)=\bigcap_{n<\omega}R^n(\gamma)$, for $\gamma<\alpha$. It follows easily now that $\kla{\leer,R'}$ captures $A$, $f$.
\end{proof}

\subsection{Consequences relating to \Bukovsky-Dehornoy}

We have now seen that all three forcing notions under investigation satisfy the strong \Prikry{} property. This means that they all come with a canonical ultrafilter on their Boolean algebra, namely $G_\eins$, where $\vG$ is the uniform representation of the generic filter given by the strong \Prikry{} property. On the other hand, these forcing notions also have canonical imitation iterations which give rise to critical sequences that generate a generic filter over their limit models. Pulling these generic filters back via the canonical elementary embedding also gives rise to a canonical ultrafilter on their Boolean algebra. It will turn out that these are just different ways of describing the same ultrafilter.

\begin{lem}
\label{lem:PullbackOfCriticalSequenceEqualsCanonicalUFInducedByStrongPrikry}
Let $\B$ be the Boolean algebra of \Prikry{} forcing $\P$ wrt.~the normal measure $\mu$ on $\kappa$. Let $\seq{M_n}{n\le\omega}$ be the iteration of $\V$ by $\mu$ of length $\omega+1$, and let $\pi_{m,n}$ (for $m\le n\le\omega$) be the iteration embeddings. Let $\seq{\kappa_n}{n<\omega}$ be the critical sequence, and let $G^*_{\vkappa}$ be the ultrafilter on $\pi_{0,\omega}(\B)$ generated by $\vkappa$. Let $U=\pi_{0,\omega}^{-1}``G^*_\vkappa$. Then
\[U=G_\eins=\{b\in\B\st\exists p\in\P\quad p\le_1\eins\ \text{and}\ p\le b\}\]
where $\le_1$ is the usual direct extension ordering for \Prikry{} forcing.

The analogous statement is true of Magidor forcing or generalized \Prikry{} forcing.
\end{lem}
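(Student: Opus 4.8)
The plan is to reduce the statement to a single inclusion between two ultrafilters and then verify that inclusion via the standard facts about iterated ultrapowers. First I would observe that both $U$ and $G_\eins$ are ultrafilters on $\B$. That $G_\eins$ is an ultrafilter is Lemma~\ref{lem:DirectedPrikryPropertyGivesUltrafilters-poset} together with Theorem~\ref{thm:PrikryForcingHasStrongPrikryProperty}. That $U=\pi_{0,\omega}^{-1}``G^*_\vkappa$ is an ultrafilter follows because $G^*_\vkappa$, being a generic filter on the complete Boolean algebra $\pi_{0,\omega}(\B)$ (computed in $M_\omega$), is an ultrafilter there, and $\pi_{0,\omega}$ is elementary, hence a Boolean homomorphism on $\B$, so preimages of ultrafilters are ultrafilters. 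Since two comparable ultrafilters on the same algebra coincide, it then suffices to prove $G_\eins\sub U$.

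For the \Prikry{} case I would argue as follows. Let $b\in G_\eins$; pick a direct extension $p=\kla{\leer,T}\le_1\eins$ (so $T\in\mu$) with $p\le b$. Applying $\pi_{0,\omega}$ gives $\kla{\leer,\pi_{0,\omega}(T)}\le\pi_{0,\omega}(b)$ in $\pi_{0,\omega}(\B)$, and since $G^*_\vkappa$ is upward closed it is enough to see $\kla{\leer,\pi_{0,\omega}(T)}\in G^*_\vkappa$. As this condition has empty stem, that amounts to $\{\kappa_n\st n<\omega\}\sub\pi_{0,\omega}(T)$. This is the folklore fact: writing $\pi_{0,\omega}=\pi_{n+1,\omega}\circ\pi_{n,n+1}\circ\pi_{0,n}$, one has $\pi_{0,n}(T)\in\pi_{0,n}(\mu)=\mu_n$ (the $n$-th iterate of $\mu$, a measure on $\kappa_n$ in $M_n$), hence $\kappa_n=[\id]_{\mu_n}\in\pi_{n,n+1}(\pi_{0,n}(T))$ by \Los; and since $\crit(\pi_{n+1,\omega})=\kappa_{n+1}>\kappa_n$, applying $\pi_{n+1,\omega}$ keeps $\kappa_n$ in the image, so $\kappa_n\in\pi_{0,\omega}(T)$. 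Thus $\pi_{0,\omega}(b)\in G^*_\vkappa$, i.e.\ $b\in U$, finishing the \Prikry{} case.

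For Magidor forcing and generalized \Prikry{} forcing I would run exactly the same two-ultrafilter skeleton, replacing only the capturing computation of the previous paragraph. For generalized \Prikry{} forcing a direct extension of $\eins$ is a condition $\kla{\leer,T}$ with $T(i)\in U_i$ for all $i<\alpha$, and one checks coordinatewise that the corresponding critical sequence of the imitation iteration of \cite{FH:BVU} lands inside $\pi(T)$, using \Los{} and the fact that later iteration embeddings fix earlier critical points, exactly as above (one must also keep track of the coordinates $i$ with $\eta_i<\omega$, where the first coordinate of conditions is eventually fully determined, so there is nothing to capture there). For Magidor forcing the direct extensions of $\eins$ are the conditions $\kla{\leer,H}$ with $H$ a ``measure-one'' function, and the Magidor critical sequence is captured by $\pi(H)$ for the same reason; here the precise description of the iteration, of the critical sequence, of the generic filter $G^*$ over the limit model, and the fact that the critical sequence is generic there, are all taken from \cite{FH:BVU}.

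I expect the only real work to be bookkeeping in the Magidor and generalized \Prikry{} cases: unwinding from \cite{FH:BVU} exactly what ``the iteration'', ``the critical sequence'', and ``$G^*$'' are, and matching the combinatorial ``capturing'' statement to that setup. The \Prikry{} case itself is immediate from the facts on iterated ultrapowers recalled above, and the reduction to a single inclusion between ultrafilters is uniform across all three forcings.
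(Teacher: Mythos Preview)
Your proposal is correct and follows essentially the same approach as the paper: show $G_\eins\sub U$ by checking that every direct extension of $\eins$ maps into $G^*_\vkappa$ (because the critical sequence lands in the image of the measure-one set), and conclude equality from the maximality of the ultrafilter $G_\eins$. You supply more detail than the paper does (explicitly verifying that $U$ is an ultrafilter and spelling out the \Los{} computation for $\kappa_n\in\pi_{0,\omega}(T)$), but the structure of the argument is identical, including the uniform treatment of the Magidor and generalized \Prikry{} cases by the same reduction.
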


\begin{proof} To see the statement about \Prikry{} forcing, first note that if $p\le_1\eins$, then $p\in U$, because if $p=\kla{\leer,X}$, say, then $X\in\mu$, and hence, $\{\vkappa\}\sub X$, so that $\pi_{0,\omega}(p)\in G^*_\vkappa$. But this means then that $G_\eins\sub U$, and since $G_\eins$ is an ultrafilter and hence maximal, it follows that $G_\eins=U$.

The exact same argument shows the claim about Magidor forcing or generalized \Prikry{} forcing. See \cite[Section 5]{Fuchs:MagidorForcing} for the definition of the imitation iteration of Magidor forcing, and \cite{Fuchs:COPS} for the imitation iteration of generalized \Prikry{} forcing.
\end{proof}

Recall the following definition from \cite{FH:BVU}.

\begin{defn}
\label{def:Skeleton}
Let $j:\V\emb{U}\check{\V}_U$ be the canonical elementary embedding.
A \emph{skeleton} for ($\B ,U$) is a set $\mathfrak{A}$ of maximal antichains in $\B$ such that
\begin{enumerate}
\item $\mathfrak{A}$ is directed under refinement,
\item $j``\mathfrak{A}\in\V^\B/U$,
\item $\check{V}_U=\{j(f)(b_A)\st A\in\mathfrak{A}\ \text{and}\ f:A\To\V\}$.
\end{enumerate}
A skeleton $\mathfrak{A}$ is \emph{simple} if every $A\in\mathfrak{A}$ is simple.
\end{defn}

Note that if $\delta=|\mathfrak{A}|$, then $j``\mathfrak{A}\in\V^\B/U$ iff $j``\delta\in\V^\B/U$, and that is the case iff ${}^\delta(\V^\B/U)\sub\V^\B/U$ (for this last equivalence, see \cite[Theorem 28]{HamkinsSeabold:BULC}). In particular, if $\delta\le\crit(j)$, then $j``\mathfrak{A}\in\V^\B/U$.

The point of simple skeletons is the following theorem, again from \cite{FH:BVU}.

\begin{thm}
\label{thm:Bukovsky-DehornoyFromSimpleSkeleton}
Suppose $U$ is an ultrafilter on $\B$ such that the Boolean ultrapower by $U$ is well-founded. Suppose further that there is a simple skeleton for $(\B,U)$. Then
\[\V^\B/U=\bigcap_{A\ \text{simple}}M_A\]
If further, every maximal antichain in $\B$ is simple, then
\[\V^\B/U=\bigcap_{A}M_A\]
\end{thm}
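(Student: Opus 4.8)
The statement to be proved is Theorem~\ref{thm:Bukovsky-DehornoyFromSimpleSkeleton}, which derives the Boolean-ultrapower \Bukovsky-Dehornoy equation $\V^\B/U=\bigcap_{A\text{ simple}}M_A$ from the existence of a simple skeleton, and upgrades it to $\V^\B/U=\bigcap_A M_A$ when every maximal antichain is simple. The plan is to prove the two inclusions of the first equality separately, then deduce the second. Throughout, I use that $\check{\V}_U$ is the direct limit of the $M_A$ along the refinement-directed system of maximal antichains, with canonical embeddings $\pi_{A,\infty}:M_A\To\check{\V}_U$, and that $\V^\B/U=\check{\V}_U[G]$, where $G=[\dot G]_U$.

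\medskip
\noindent\textbf{The easy inclusion $\subseteq$.} Here I would show $\V^\B/U\sub M_A$ for \emph{every} simple $A$, so in particular $\V^\B/U\sub\bigcap_{A\text{ simple}}M_A$. Recall that $A$ simple means $G_A:=\pi_{A,\infty}^{-1}``G\in M_A$. Now $M_A[G_A]$ makes sense (as $G_A$ is $\pi_{0,A}(\B)$-generic over $M_A$ in the appropriate sense, being the pullback of the generic $G$), and one checks that $\pi_{A,\infty}$ lifts to an elementary embedding $M_A[G_A]\To\check{\V}_U[G]=\V^\B/U$ sending $G_A$ to $G$. Since $\check{\V}_U=\bigcup_A\pi_{A,\infty}``M_A$ and $G=[\dot G]_U$ is itself in the range of this lift for any $A$ (because $G_A\in M_A$ by simplicity), every element of $\V^\B/U=\check{\V}_U[G]$ is of the form $\pi_{B,\infty}(h)(G)$ for some $B$ and some function $h\in M_B$ — but using directedness and a common refinement of $A$ and $B$ together with simplicity, one rewrites this as an element of $M_A$. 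The cleanest route: every $x\in\V^\B/U$ has the form $j(f)(b_A)$ for \emph{some} antichain in the skeleton by clause (3) of Definition~\ref{def:Skeleton}, and since $G_A\in M_A$, the name $\dot G$ gets absorbed; so $x\in M_A$ for each simple $A$.

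\medskip
\noindent\textbf{The hard inclusion $\supseteq$.} This is where the simple skeleton $\mathfrak{A}$ is used essentially. Suppose $x\in\bigcap_{A\text{ simple}}M_A$; I want $x\in\V^\B/U$. Since $j``\mathfrak{A}\in\V^\B/U$ (clause (2)), the sequence $\seq{b_A}{A\in\mathfrak{A}}$ of ``generic branch points'' is an element of $\V^\B/U$, and hence so is the family $\seq{x_A}{A\in\mathfrak{A}}$ where $x_A\in M_A$ is the $M_A$-preimage of $x$ under $\pi_{A,\infty}$ — the point being that $x\in M_A$ gives a \emph{canonical} choice of $x_A=\pi_{A,\infty}^{-1}(x)$, and the assignment $A\mapsto x_A$ is definable in $\V^\B/U$ from $j``\mathfrak A$ and $x$'s images, so it lies in $\V^\B/U$ by closure (using that $\V^\B/U$ computes the $M_A$'s and the $\pi$'s correctly). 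Then $x=\pi_{A,\infty}(x_A)$ for each $A\in\mathfrak{A}$, and since $\mathfrak A$ generates $\check\V_U$ (clause (3)) and is directed, a compatibility/coherence argument over the directed system pins down $x$ inside $\V^\B/U$. I expect \textbf{this inclusion to be the main obstacle}: the subtlety is arguing that the family $\seq{x_A}{A\in\mathfrak A}$ — equivalently, enough information to reconstruct $x$ — genuinely lands in $\V^\B/U$ and not just in some $M_A$, and this is exactly what clauses (2) and (3) of the skeleton definition are engineered to supply. One must be careful that $\V^\B/U$ correctly identifies each $M_A=\Ult(\V,U_A)$ and each embedding $\pi_{A,\infty}$; this uses that $j``A$, and more generally $j``\mathfrak A$, is available inside $\V^\B/U$.

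\medskip
\noindent\textbf{The second equality.} If \emph{every} maximal antichain in $\B$ is simple, then $\{A:A\text{ simple}\}$ is the set of all maximal antichains, so $\bigcap_{A\text{ simple}}M_A=\bigcap_A M_A$, and the first equality immediately yields $\V^\B/U=\bigcap_A M_A$. (One should note that under this hypothesis the collection of all maximal antichains is itself a simple skeleton — it is trivially directed under refinement, clause (3) holds since $\check\V_U$ is the direct limit, and clause (2) needs the hypothesis that the Boolean ultrapower is well-founded together with the relevant closure, as in the remark following Definition~\ref{def:Skeleton} — so the first part of the theorem applies.) This completes the proof.
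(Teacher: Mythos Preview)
The paper does not prove this theorem; it is quoted from \cite{FH:BVU} (see the sentence immediately preceding the statement), so there is no in-paper proof to compare against, and I can only assess your sketch on its own terms.

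Both nontrivial steps have genuine gaps. For the inclusion $\V^\B/U\sub M_A$ with $A$ simple, your ``cleanest route'' invokes clause (3) of Definition~\ref{def:Skeleton} to write every $x\in\V^\B/U$ as $j(f)(b_A)$; but clause (3) asserts this only for elements of $\check{\V}_U$, not of $\V^\B/U=\check{\V}_U[G]$, and the elements outside $\check{\V}_U$ are precisely the ones at issue. (The paper records this inclusion separately as Lemma~\ref{lem:EasyDirectionForSimpleAntichainsGlobal}, also imported from \cite{FH:BVU}.) For the reverse inclusion, your definition $x_A=\pi_{A,\infty}^{-1}(x)$ is incoherent: by hypothesis $x$ already lies in $M_A$, while $\pi_{A,\infty}$ maps $M_A$ \emph{into} $\check{\V}_U$, so there is nothing to pull back. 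More seriously, the claim that ``the assignment $A\mapsto x_A$ is definable in $\V^\B/U$ from $j``\mathfrak{A}$ and $x$'s images'' is circular, since placing any avatar of $x$ into $\V^\B/U$ is exactly what you are trying to establish; you have not explained how $\V^\B/U$ gains access to the individual $M_A$'s or to $x$ as an element of them. Finally, the parenthetical in your treatment of the second equality is unnecessary: the phrase ``if further'' keeps the simple-skeleton hypothesis in force, so you need not argue that the collection of all maximal antichains is itself a skeleton (and the closure required for clause (2) would in general fail for that collection, well-foundedness notwithstanding).
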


\begin{thm}
\label{thm:B-DForPrikryAndMagidorForcing}
If $\B$ is the Boolean algebra of \Prikry{} forcing and $U$ is the canonical ultrafilter (i.e., the pullback of the generic filter generated by the critical sequence, i.e., the ultrafilter generated by the set $\{p\st p\le_1\eins\}$), then there is a simple skeleton for $\B$, $U$, and every maximal antichain in $\P$ is simple with respect to $U$, so \[\V^\B/U=\bigcap_{A\sub\P\ \text{maximal antichain}}M_A\]
The same is true if $\B$ is the Boolean algebra of Magidor forcing.
\end{thm}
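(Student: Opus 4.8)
The strategy is to combine the large-scale results already assembled in this section with the earlier general machinery. The first step is to invoke Theorems \ref{thm:PrikryForcingHasStrongPrikryProperty} and \ref{thm:MagidorForcingHasStrongPrikryProperty}: both \Prikry{} forcing and Magidor forcing, with their respective direct extension orderings, satisfy the strong \Prikry{} property. By Theorem \ref{thm:StrongPrikryImpliesGisUniformlyRepresented-poset} this yields, in each case, the canonical ultrafilter $U=G_\eins$ on the Boolean algebra $\B$, together with the uniform representation $\vG=\seq{G_p}{p\in\P}$ of $G$, and the conclusion that $G_A=[\vG\rest A]_{U_A}\in M_A$ for \emph{every} maximal antichain $A\sub\P$. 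By Lemma \ref{lem:PullbackOfCriticalSequenceEqualsCanonicalUFInducedByStrongPrikry}, this ultrafilter $U=G_\eins$ is precisely the pullback under $\pi_{0,\omega}$ of the ultrafilter generated by the critical sequence of the imitation iteration, so the two descriptions of ``the canonical ultrafilter'' coincide, as the statement asserts.

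Next I would observe that $G_A\in M_A$ for every maximal antichain $A$ means exactly that every such $A$ is simple (in the terminology recalled before Definition \ref{def:Skeleton}: $\pi_{A,\infty}^{-1}``G = G_A\in M_A$). In particular every maximal antichain in $\P$ is simple with respect to $U$. To apply Theorem \ref{thm:Bukovsky-DehornoyFromSimpleSkeleton} and conclude $\V^\B/U=\bigcap_{A}M_A$, it remains only to exhibit \emph{some} simple skeleton for $(\B,U)$ — that is, a set $\mf{A}$ of maximal antichains, directed under refinement, with $j``\mf{A}\in\V^\B/U$ and $\check\V_U=\{j(f)(b_A)\st A\in\mf{A},\ f:A\To\V\}$. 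Since \emph{every} maximal antichain is now simple, the only real content is directedness together with the closure requirement $j``\mf{A}\in\V^\B/U$; and by the remark following Definition \ref{def:Skeleton}, the latter holds automatically whenever $|\mf{A}|\le\crit(j)$.

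For \Prikry{} forcing, the natural choice is the skeleton used in \cite{FH:BVU}: the antichains $A_n$ consisting of all conditions with stem of length $n$ and trivial (measure-one) second coordinate $\kappa\ohne(\max(\text{stem})+1)$ — more precisely the maximal antichain of conditions $\kla{s,\kappa\ohne(\sup s+1)}$ with $|s|=n$ — indexed by $n<\omega$. This family is countable, hence of size below $\crit(j)=\kappa$, it is directed under refinement ($A_{n+1}$ refines $A_n$), and one checks that $\check\V_U$ is generated from these antichains by functions into $\V$, since the critical sequence $\vkappa$ and the iteration embeddings recover all of $\check\V_U=M_\omega$. For Magidor forcing one uses the analogous family from \cite{Fuchs:MagidorForcing}, again countable, directed under refinement, and generating $\check\V_U$ via the imitation iteration. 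Then Theorem \ref{thm:Bukovsky-DehornoyFromSimpleSkeleton} applies: there is a simple skeleton, every maximal antichain in $\P$ is simple, and therefore
\[
\V^\B/U=\bigcap_{A\sub\P\ \text{maximal antichain}}M_A,
\]
in both cases.

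The main obstacle is the verification that the chosen family $\mf{A}$ is genuinely a skeleton — i.e.\ clause (3), $\check\V_U=\{j(f)(b_A)\st A\in\mf{A},\ f\colon A\To\V\}$ — rather than the simplicity or directedness, which are now cheap. However, this is exactly the analysis carried out in \cite{FH:BVU} identifying the Boolean ultrapower with the $\omega$-th iterate (and the direct limit over $\mf{A}$ with that same model), so the honest work here is a citation plus the bookkeeping that the ultrafilter $U$ produced by the strong \Prikry{} property is the same one treated there — which is precisely the content of Lemma \ref{lem:PullbackOfCriticalSequenceEqualsCanonicalUFInducedByStrongPrikry}. Thus the proof is essentially an assembly of Theorems \ref{thm:PrikryForcingHasStrongPrikryProperty}, \ref{thm:MagidorForcingHasStrongPrikryProperty}, \ref{thm:StrongPrikryImpliesGisUniformlyRepresented-poset}, \ref{thm:Bukovsky-DehornoyFromSimpleSkeleton} and Lemma \ref{lem:PullbackOfCriticalSequenceEqualsCanonicalUFInducedByStrongPrikry}, with the skeletons imported from \cite{FH:BVU}.
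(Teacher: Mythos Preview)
Your proposal is correct and follows essentially the same route as the paper: invoke the strong \Prikry{} property (Theorems \ref{thm:PrikryForcingHasStrongPrikryProperty}, \ref{thm:MagidorForcingHasStrongPrikryProperty}) and Theorem \ref{thm:StrongPrikryImpliesGisUniformlyRepresented-poset} to get simplicity of all maximal antichains, import the skeletons from \cite{FH:BVU}, and conclude via Theorem \ref{thm:Bukovsky-DehornoyFromSimpleSkeleton}. One small correction: the Magidor skeleton $\{A_a\st a\in[\alpha]^{<\omega}\}$ is not in general countable---it has size $|\alpha|^{<\omega}$---but, as the paper notes and as you yourself observe via the remark after Definition \ref{def:Skeleton}, what matters is only that this is below the critical point of $j$, which it is; also, the relevant citation is \cite{FH:BVU} (Theorem 2.6 there), not \cite{Fuchs:MagidorForcing}.
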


\begin{proof} Let's deal with the case of \Prikry{} forcing first.
Construe $\P_\mu$ as a dense subset of $\B$. If $s$ is a finite increasing sequence of ordinals less than $\kappa$, then let $s^*=\kla{s,\kappa\ohne\lub(\ran(s))}\in\P_\mu$ (so $s^*$ is the weakest condition with first coordinate $s$). Let
\[ A_n=\{s^*\st s:n\To\kappa\ \text{is strictly increasing}\}.\]
Then $A_n$ is a maximal antichain in $\B$. It is shown in the proof of \cite[Theorem 2.2]{FH:BVU} that the system
$\{A_n\st n<\omega\}$ of maximal antichains generates $\V^\B/U$, and it is easy to see that it is directed. It also has size $\omega$, so it is a skeleton. It is simple because every maximal antichain in $\P$ is simple with respect to $U$, since \Prikry{} forcing satisfies the strong \Prikry{} property (see Theorem \ref{thm:StrongPrikryImpliesGisUniformlyRepresented-poset} and Theorem \ref{thm:PrikryForcingHasStrongPrikryProperty}). The result now follows from Theorem \ref{thm:Bukovsky-DehornoyFromSimpleSkeleton}.

The argument for Magidor forcing is very similar. The simple skeleton is given by the collection $\mathfrak{A}=\{A_a\st a\in[\alpha]^{{<}\omega}\}$, where $A_a$ is the collection of conditions of the form $\kla{s,T}$ in the Magidor forcing whose first coordinate $s$ has domain $a$ and such that $\kla{s,T}$ is the weakest condition with first coordinate $s$. This collection was used in Theorem \cite[Theorem 2.6]{FH:BVU}, and implicit in that proof is that $\mathfrak{A}$ generates $\check{\V}_U$. The size of $\mathfrak{A}$ is $\alpha^{<\omega}$, which is far less than the critical point of the ultrapower embedding, which is a measurable cardinal greater than $\alpha$. Magidor forcing has the strong \Prikry{} property with respect to the direct extension ordering by Theorem \ref{thm:MagidorForcingHasStrongPrikryProperty}, so every maximal antichain is simple, by Theorem \ref{thm:StrongPrikryImpliesGisUniformlyRepresented-poset}, and as above, the result follows from Theorem \ref{thm:Bukovsky-DehornoyFromSimpleSkeleton}. \end{proof}

Recall that in the case of \Prikry{} forcing, the forcing extension ($\V^\B/U$) of the direct limit model ($\check{\V}_U$) by the critical sequence ($G$) is the intersection of the iterates leading to the direct limit model. In the case of Magidor forcing, it was shown by Dehornoy in \cite{Dehornoy:IteratedUltrapowersChangingCofinalities} that the forcing extension of the direct limit model can be realized as an intersection of certain models, but not of all the iterates leading up to the limit model. If $M_\omega$ is the $\omega$-th model in the ``Magidor iteration'', then clearly, the sequence $\seq{\kappa_n}{n<\omega}$ is cofinal in $\kappa_\omega$, while $\kappa_\omega$ is measurable in $M_\omega$, so that initial segment of the critical sequence is not in $M_\omega$. One has to take all the possible finite iterates $M_a$, where $a$ is a finite subset of the length of the increasing chain of normal ultrafilters used for the Magidor forcing. However, when viewed as a phenomenon of Boolean ultrapowers, there is no difference between the two instances. In both cases, $\V^\B/U=\bigcap_A M_A$.

Let's see what can be said about generalized \Prikry{} forcing. The following was shown in \cite{FH:BVU}.

\begin{lem}
\label{lem:EasyDirectionForSimpleAntichainsGlobal}
If every maximal antichain $A\sub\B$ is simple, then
\[\V^\B/U\sub\bigcap_A M_A.\]
More generally,
\[\V^\B/U\sub\bigcap_{A\ \text{simple}}M_A.\]
\end{lem}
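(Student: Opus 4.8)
The plan is to reduce both inclusions to a single claim: if $A$ is a simple maximal antichain of $\B$, then $\V^\B/U\subseteq M_A$. The second (``more general'') inclusion is then immediate, and the first follows because, when every maximal antichain is simple, $\bigcap_A M_A=\bigcap_{A\ \text{simple}}M_A$.

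So fix a simple maximal antichain $A$, and set $G_A=\pi_{A,\infty}^{-1}``G$, which by simplicity lies in $M_A$. I would use the decomposition $\V^\B/U=\check\V_U[G]$, together with the fact that $\check\V_U$ is the direct limit of the system $\langle M_B,\pi_{B,B'}\rangle$ of the $M_B$'s along refinement, with limit embeddings $\pi_{B,\infty}:M_B\To\check\V_U$. The strategy is to verify the two ingredients (i) $\check\V_U\subseteq M_A$ and (ii) $G\in M_A$: granted these, $M_A$ is a transitive model of $\ZFC$ containing $\check\V_U\cup\{G\}$, hence it contains the whole forcing extension $\check\V_U[G]=\V^\B/U$.

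For (i) I would restrict the direct limit to maximal antichains refining $A$ (still cofinal, so the limit is unchanged) and use that along this subsystem the factor embeddings $\pi_{B,B'}$ present $M_{B'}$ as an inner model of $M_B$, so that tracing through the limit gives $\check\V_U\subseteq M_A$. This is the abstract analogue of the classical fact that the $\omega$-th iterate of a normal measure sits inside every finite iterate, and it can be read off from the basic theory of the system of ultrapowers $M_B$ (cf.~\cite{HamkinsSeabold:BULC}). For (ii) --- the place where simplicity is essential --- one recovers $G$ itself inside $M_A$ from the single datum $G_A\in M_A$ and the parameters of $\check\V_U$ (already available in $M_A$ by (i)): $G_A$ encodes the ``committed'' part of $G$ relative to $A$, while the rest of $G$ is pinned down by data internal to $\check\V_U$, which $M_A$ reconstructs correctly (in the concrete cases this is visible: $G$ corresponds to (a tail of) the critical sequence of the imitation iteration, whose initial segment is encoded in $G_A$ and whose tail $M_A$ computes by running the iteration internally). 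This gives $G\in M_A$ and completes the proof.

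The main obstacle is ingredient (ii): extracting $G$ from $G_A=\pi_{A,\infty}^{-1}``G$ without covertly using the external embedding $\pi_{A,\infty}$, whose range is merely an elementary, not transitive, submodel of $\check\V_U$, so that $\pi_{A,\infty}``G_A$ is only $G\cap\mathrm{ran}(\pi_{A,\infty})$ and does not by itself generate $G$. Ingredient (i) is comparatively routine given the structure of the direct limit system; the one point requiring care there is the verification that the net $\langle M_B\rangle_B$ is decreasing under refinement, i.e.\ that each factor embedding is an internal ultrapower embedding.
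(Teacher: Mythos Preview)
Your reduction to ``$\V^\B/U\subseteq M_A$ for each simple $A$'' is correct, and decomposing this into (i) $\check\V_U\subseteq M_A$ and (ii) $G\in M_A$ is a valid strategy. But neither ingredient is actually established by what you wrote.

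For (i), your argument rests on the claim that for $B'$ refining $B$, the factor map $\pi_{B,B'}$ exhibits $M_{B'}$ as an inner model of $M_B$. This is not true for Boolean ultrapowers in general: $M_{B'}=\Ult(\V,U_{B'})$ is an ultrapower of $\V$, computed with functions from $\V$, and the factor embedding $\pi_{B,B'}$ is an \emph{external} map, not an internal ultrapower of $M_B$. The analogy with the iteration of a normal measure breaks down precisely here --- in that case each step uses a measure lying in the previous model, which is what makes the iterates nested. Nothing in the abstract direct-limit description of $\check\V_U$ gives you $M_{B'}\subseteq M_B$, and indeed (i) should \emph{not} be provable without invoking simplicity somewhere. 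For (ii), you correctly identify the obstacle and then resolve it only in the concrete cases; the lemma is stated for arbitrary $\B$ and $U$, so this is a genuine gap.

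The clean general argument bypasses both issues at once. The map $[\sigma]_U\mapsto[\pi_A(\sigma)]_{G_A}$ is an isomorphism of the $\in$-structure $\V^\B/U$ onto the quotient $(M_A)^{\pi_A(\B)}/G_A$; the equivalence of $=$ and $\in$ transfers because $\BV{\sigma=\tau}\in U$ iff $\pi_A(\BV{\sigma=\tau})\in G_A$, and surjectivity comes from mixing: if $\tau=[g]_{U_A}$ with $g:A\to\V^\B$, take $\sigma$ with $a\Vdash\sigma=g(a)$ and use $[\id]_{U_A}\in G_A$ to get $[\pi_A(\sigma)]_{G_A}=[\tau]_{G_A}$. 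Since $G_A\in M_A$ by simplicity, the quotient $(M_A)^{\pi_A(\B)}/G_A$ is a class definable in $M_A$ from parameters in $M_A$; $M_A$ sees it is well-founded and forms its transitive collapse, which by Mostowski agrees with the collapse computed in $\V$, namely $\V^\B/U$. Hence $\V^\B/U\subseteq M_A$. Your (i) and (ii) then drop out as corollaries rather than requiring separate proofs.
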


Using this, we obtain the following theorem.

\begin{thm}
\label{thm:OneHalfOfB-DforGeneralizedPrikry}
If $\B$ is the Boolean algebra of the generalized \Prikry{} forcing and $U$ is the canonical ultrafilter (i.e., the pullback of the generic filter generated by the critical sequence, i.e., the ultrafilter generated by the set $\{p\st p\le_1\eins\}$), then every maximal antichain is simple, so
\[\V^\B/U\sub\bigcap_{A\sub\P}M_A\]
\end{thm}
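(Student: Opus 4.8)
The plan is to reduce the statement to what has already been established about the strong \Prikry{} property and simple skeletons. First I would invoke Theorem \ref{thm:GeneralizedPrikryForcingHasStrongPrikryProperty}, which says that generalized \Prikry{} forcing $\P=\P_{\veta,\vU}$ satisfies the strong \Prikry{} property with respect to the direct extension ordering $\le_1$. By Theorem \ref{thm:StrongPrikryImpliesGisUniformlyRepresented-poset}, this means that $G$ is uniformly represented over $\P$ with respect to $U'=G_\eins$, where $G_p=\{b\in\B\st\exists q\le_1 p\ q\le b\}$, and in particular every maximal antichain $A\sub\P$ is simple with respect to $U'$, i.e., $G_A=[\vec G\rest A]_{U_A}\in M_A$. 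So the only thing needing checking is that the ultrafilter $U$ of the theorem statement (the pullback of the generic generated by the critical sequence of the imitation iteration) coincides with $U'=G_\eins$. But that is precisely Lemma \ref{lem:PullbackOfCriticalSequenceEqualsCanonicalUFInducedByStrongPrikry}, whose final sentence asserts the analogous statement for generalized \Prikry{} forcing: $U=G_\eins$.

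Having identified the two ultrafilters, I would then conclude as follows. Since every maximal antichain $A\sub\P$ is simple with respect to $U$, Lemma \ref{lem:EasyDirectionForSimpleAntichainsGlobal} applies directly and yields
\[\V^\B/U\sub\bigcap_{A\sub\P}M_A,\]
noting that since $\P$ is dense in $\B$, the direct limit of $\kla{M_A\st A\sub\P}$ is $\check\V_U$, and the relevant intersection model for the poset version is $\bigcap_{A\sub\P}M_A$, exactly as discussed in Subsection \ref{subsec:StrongPrikryForPoset}. This is the desired inclusion.

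The only genuinely substantive point is that the argument for Lemma \ref{lem:PullbackOfCriticalSequenceEqualsCanonicalUFInducedByStrongPrikry} goes through in the generalized setting: one checks that if $p\le_1\eins$, say $p=\kla{\leer,R}$, then $\pi_{0,\alpha}(p)$ lies in the ultrafilter $G^*$ generated by the critical sequence of the imitation iteration, because the critical points land in the relevant measure-one sets $R(\gamma)$; hence $G_\eins\sub U$, and maximality of $G_\eins$ forces equality. This relies on the description of the imitation iteration of generalized \Prikry{} forcing from \cite{Fuchs:COPS}, and is the step I would expect to require the most care — though by the remark in the proof of Lemma \ref{lem:PullbackOfCriticalSequenceEqualsCanonicalUFInducedByStrongPrikry} it is essentially the same computation as for ordinary \Prikry{} forcing. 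Everything else is bookkeeping, citing the already-proved theorems about the strong \Prikry{} property and the easy inclusion for simple antichains.
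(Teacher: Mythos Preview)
Your proposal is correct and follows essentially the same route as the paper's proof: invoke Theorem~\ref{thm:GeneralizedPrikryForcingHasStrongPrikryProperty} for the strong \Prikry{} property, deduce that every maximal antichain is simple (the paper cites Theorem~\ref{thm:StrongPrikryImpliesG_AinM_A-poset} where you cite the global Theorem~\ref{thm:StrongPrikryImpliesGisUniformlyRepresented-poset}, but these amount to the same thing here), and then apply Lemma~\ref{lem:EasyDirectionForSimpleAntichainsGlobal}. Your explicit invocation of Lemma~\ref{lem:PullbackOfCriticalSequenceEqualsCanonicalUFInducedByStrongPrikry} to identify the two descriptions of $U$ is a point the paper's proof leaves implicit (it is already recorded in the theorem statement), but it is certainly correct and arguably clearer.
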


\begin{proof} By Theorem \ref{thm:GeneralizedPrikryForcingHasStrongPrikryProperty}, generalized \Prikry{} forcing has the strong \Prikry{} property, so by Theorem \ref{thm:StrongPrikryImpliesG_AinM_A-poset}, every maximal antichain is simple with respect to the canonical ultrafilter on its Boolean algebra. The claim now follows from Lemma \ref{lem:EasyDirectionForSimpleAntichainsGlobal}. \end{proof}

If the generalized \Prikry{} forcing is \emph{short}, meaning that the order type of the set of measurable cardinal from which it is defined is less than the minimum of that set, then more can be said.

\begin{thm}
\label{thm:OneHalfOfB-DforGeneralizedPrikry}
If $\B$ is the Boolean algebra of a short generalized \Prikry{} forcing and $U$ is the canonical ultrafilter, then the full \Bukovsky-Dehornoy phenomenon arises, that is, we have that
\[\V^\B/U=\bigcap_{A\sub\P}M_A\]
\end{thm}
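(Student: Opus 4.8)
The plan is to produce a simple skeleton for $(\B,U)$ and then invoke Theorem \ref{thm:Bukovsky-DehornoyFromSimpleSkeleton}. By Theorem \ref{thm:OneHalfOfB-DforGeneralizedPrikry} (the non-short version, stated just above) together with Lemma \ref{lem:EasyDirectionForSimpleAntichainsGlobal}, the inclusion $\V^\B/U\sub\bigcap_{A\sub\P}M_A$ is already in hand, since every maximal antichain is simple with respect to the canonical ultrafilter $U$. So the work is entirely in the reverse inclusion, and by Theorem \ref{thm:Bukovsky-DehornoyFromSimpleSkeleton} it suffices to exhibit \emph{some} simple skeleton $\mathfrak{A}$ for $(\B,U)$: directed under refinement, with $j``\mathfrak{A}\in\V^\B/U$, and with $\check{\V}_U$ generated by $\{j(f)(b_A)\st A\in\mathfrak{A},\ f\colon A\To\V\}$.

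First I would fix the natural candidate skeleton, mimicking the \Prikry{} and Magidor cases in Theorem \ref{thm:B-DForPrikryAndMagidorForcing}. Since $\eta_i=1$ suffices to convey the idea (the general case adds only bookkeeping, exactly as in the proof of Theorem \ref{thm:GeneralizedPrikryForcingHasStrongPrikryProperty}), for a finite partial function $s$ with $\dom(s)\sub\alpha$ assigning to each $i\in\dom(s)$ a singleton below $\kappa_i$ (above $\sup_{j<i}\kappa_j$), let $s^*$ be the weakest condition with stem $s$, and for a finite $a\sub\alpha$ let
\[A_a=\{s^*\st \dom(s)=a\ \text{and}\ s\ \text{is a legal stem}\}.\]
Each $A_a$ is a maximal antichain in $\B$, and the family $\mathfrak{A}=\{A_a\st a\in[\alpha]^{<\omega}\}$ is directed under refinement (refining the index set $a$ refines the antichain). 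The size of $\mathfrak{A}$ is $|\alpha^{<\omega}|=|\alpha|$, which, because the forcing is \emph{short}, is strictly below $\min(D)=\kappa_0\le\crit(j)$; hence by the remark following Definition \ref{def:Skeleton}, $j``\mathfrak{A}\in\V^\B/U$ automatically. (This is precisely where shortness is used, and it is the only place: in the non-short case $|\alpha|$ may exceed the critical point, so $j``\mathfrak{A}$ need not be in the Boolean model.)

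Next I would verify generation, i.e.\ $\check{\V}_U=\{j(f)(b_{A_a})\st a\in[\alpha]^{<\omega},\ f\colon A_a\To\V\}$. This is the analogue of the statements ``implicit in the proof'' of \cite[Theorems 2.2, 2.6]{FH:BVU}, and I would either cite the corresponding computation from \cite{FH:BVU} for generalized \Prikry{} forcing directly, or argue it from the imitation iteration of \cite{Fuchs:COPS}: the direct limit $\check{\V}_U$ is the $\alpha$-fold imitation iterate, every element of which is of the form $\pi_{0,\alpha}(f)$ applied to a finite tuple of critical points $\kappa_{i_1},\dots,\kappa_{i_k}$ with $\{i_1,\dots,i_k\}=a$; pulling back via the canonical correspondence, $b_{A_a}$ is exactly the image of that finite tuple, so $j(f)(b_{A_a})$ ranges over all of $\check{\V}_U$ as $a$ and $f$ vary. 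Finally, simplicity of $\mathfrak{A}$: each $A_a$ is a maximal antichain of $\P$, and by Theorem \ref{thm:GeneralizedPrikryForcingHasStrongPrikryProperty} generalized \Prikry{} forcing has the strong \Prikry{} property, so by Theorem \ref{thm:StrongPrikryImpliesGisUniformlyRepresented-poset} (or Theorem \ref{thm:StrongPrikryImpliesG_AinM_A-poset}) every maximal antichain of $\P$ is simple with respect to $U=G_\eins$, and by Lemma \ref{lem:PullbackOfCriticalSequenceEqualsCanonicalUFInducedByStrongPrikry} this $U$ coincides with the canonical pullback ultrafilter in the statement. Thus $\mathfrak{A}$ is a simple skeleton, and Theorem \ref{thm:Bukovsky-DehornoyFromSimpleSkeleton} yields $\V^\B/U=\bigcap_A M_A=\bigcap_{A\sub\P}M_A$, as desired.

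The main obstacle I anticipate is the generation clause: confirming that the particular antichains $A_a$ generate $\check{\V}_U$ over $U$, rather than merely being cofinal among antichains, requires matching the Boolean-ultrapower seed $b_{A_a}$ against the finite tuples of critical points produced by the imitation iteration of \cite{Fuchs:COPS}, and making sure shortness is not secretly needed there as well (I expect it is not — shortness is needed only to get $j``\mathfrak{A}$ into the Boolean model). Everything else is bookkeeping parallel to Theorem \ref{thm:B-DForPrikryAndMagidorForcing}.
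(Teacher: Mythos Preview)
Your proposal is correct and follows essentially the same route as the paper: build the skeleton $\mathfrak{A}=\{A_a\st a\in[\alpha]^{<\omega}\}$ (in the general case indexed by finite functions $i\colon b\to[1,\omega)$ specifying stem sizes), use shortness to bound $|\mathfrak{A}|<\kappa_0\le\crit(j)$, verify generation via the imitation iteration of \cite{Fuchs:COPS}, and conclude by Theorem~\ref{thm:Bukovsky-DehornoyFromSimpleSkeleton}. One small correction to your closing remark: shortness is in fact also used in the generation step, not only for $j``\mathfrak{A}\in\V^\B/U$---it is what guarantees that the imitation iteration has the simple linear form (each $U_\gamma$ is applied $\eta_\gamma$ times without later measures being moved), so that every element of $\check{\V}_U$ really is $j(f)$ evaluated at a finite tuple of critical points matching some $b_{A_a}$.
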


\begin{proof} Let $\B$ be the complete Boolean algebra of $\P=\P_{\vec{\eta},\vec{U}}$, a short generalized \Prikry{} forcing. Let $\dom(\vec{U})=\dom(\vec{\eta})=\alpha$, and let $U_\gamma$ be a normal ultrafilter on $\kappa_\gamma$, for $\gamma<\alpha$. So by assumption, we have that $\alpha<\kappa_0$. By Theorem \ref{thm:OneHalfOfB-DforGeneralizedPrikry}, we know that every maximal antichain is simple, so by Theorem \ref{thm:Bukovsky-DehornoyFromSimpleSkeleton}, it suffices to show that there is a skeleton for $(\B,U)$. This can be seen in much the same way as in the case of \Prikry{} or Magidor forcing, with the obvious modifications. Namely, for every function $i:b\To[1,\omega)$ such that $b\sub\alpha$ is finite and for all $\gamma\in b$, $i(\gamma)<1+\eta_\gamma$, we can consider the set $A_i$ consisting of all conditions $\kla{s,T_s}\in\P$ such that $\dom(s)=b$, for all $\gamma\in b$, $|s(\gamma)=i(\gamma)|$ and $T_s$ is such that $\kla{s,T_s}$ is the weakest condition in $\P$ with $s$ as the first coordinate. The argument of the proof of \cite[Lemma 2.9]{FH:BVU} then shows that every element of $\check{\V}_U$ is of the form $j(f)(b_{A_i})$, for some $i$ and $f:A_i\To\V$. The reason is that we already know that $\check{\V}_U$ is the iterated ultrapower of the $\vec{U}$ sequence in which $U_i$ (and its images) is applied $\eta_i$ many times, see \cite{Fuchs:COPS} for a precise description of the iteration -- it takes the simple form described because $\P$ is short. Since the argument in \cite{FH:BVU} focuses on the case that $\eta_\gamma=1$ for all $\gamma<\alpha$, I'll sketch the argument in the following. Denoting the critical points of the embeddings by $\lambda_{\gamma,i}$, $i<\eta_\gamma$, it follows on general grounds that every element of $\check{\V}_U$ is of the form $j(f)(\lambda_{\gamma_0,0},\ldots,\lambda_{\gamma_0,n_0-1},\ldots,\lambda_{\gamma_m,0},\ldots,\lambda_{\gamma_m,n_m-1})$. One can then let $b=\{\gamma_0,\ldots,\gamma_m\}$, $i(\gamma_j)=n_j$ for $j<m$, and define $f^*:A_i\To\V$ by $f^*(\kla{s,T})=f((s(\gamma_0))_0,\ldots,(s(\gamma_0))_{n_0-1},\ldots,(s(\gamma_m))_0,\ldots,(s(\gamma_m))_{n_m-1})$. It is then obvious that $j(f)(\vec{\lambda})=j(f^*)(b_{A_i})$, where $j$ is the iterated ultrapower embedding, which is the same as the Boolean ultrapower embedding.
\end{proof}

Finally, let me say a few words about the case of generalized \Prikry{} forcing which is not short. In this case, there is no skeleton - the smallness requirement fails. However, the strong \Prikry{} property implies more than just that every antichain is simple with respect to the canonical ultrafilter, and this allows us to say a little more about the relationship between the intersection model and the Boolean model. To formulate it, we need some terminology from \cite{FH:BVU}.

\begin{defn}
\label{defn:UniformBelowAandEventuallyUniform}
If $A\sub\P$ is a maximal antichain, then let $\P_{\le A}=\{p\st\exists q\in A\quad p\le q\}$. A function $\vx=\seq{x_p}{p\in\P_{\le A}}$ is a uniform representation (wrt.~$U$) of $x$ \emph{below} $A$, where $x\sub\check{\V}_U$, if for every maximal antichain $B\le^* A$, $[\vx\rest B]_{U_B}=x_B$. It is an eventually uniform representation (wrt.~$U$) of $x$ if there is a maximal antichain below which it is a uniform representation.

An eventually uniform representation $\vx$ of a set $x$ is \emph{continuous} if for every $y$, and every $p\in\P$, there is a $q\le p$ such that for all $r_1,r_2\le q$, $y\in x_{r_1}$ iff $y\in x_{r_2}$.

If $x\sub\V^\B/U$ has a continuous, eventually uniform representation and it is clear from the context which ultrafilter we have in mind, then we just say that $x$ is CEU.
\end{defn}

The two main facts on CEU representations shown in \cite{FH:BVU} are as follows.

\begin{fact}
\label{fact:CharacterizationOfB-DwhenGisUP}
If $G$ is uniformly represented with respect to $U$, then the following are equivalent:
\begin{enumerate}
  \item
  \label{item:B-Dholds}
  $\bigcap_AM_A=\V^\B/U$
  \item
  \label{item:ThingsInTheIntersectionModelHaveContinuousUniformReps}
  Every $x\in\bigcap_{A\sub\P}M_A$ with $x\sub\check{V}_U$ is CEU wrt.~$U$.
\end{enumerate}
\end{fact}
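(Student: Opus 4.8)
The plan is to split the equivalence into two lemmas and then assemble them. \emph{Setup.} Since $G$ is uniformly represented wrt.~$U$, as in Theorem~\ref{thm:CharacterizationOfStrongPrikryForCBA} (poset version, Lemma~\ref{lem:IfG_AinM_AUniformlyThenStrongPrikryHolds-poset} together with Theorem~\ref{thm:StrongPrikryImpliesGisUniformlyRepresented-poset}) we may fix a direct extension ordering $\le_1$ for which $\kla{\P,\le,\le_1}$ has the strong \Prikry{} property with $G_\eins=U$, with the canonical system $\vG=\seq{G_p}{p\in\P}$ of ultrafilters on $\B$ (each $G_p\ni p$, by Lemma~\ref{lem:DirectedPrikryPropertyGivesUltrafilters-poset}) satisfying $G_B=\pi_{B,\infty}^{-1}``G=[\vG\restrict B]_{U_B}$ for every maximal antichain $B\sub\P$. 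In particular every maximal antichain is simple, so $\V^\B/U\sub\bigcap_{A\sub\P}M_A$ always holds by Lemma~\ref{lem:EasyDirectionForSimpleAntichainsGlobal}, and $\V^\B/U=\check{\V}_U[G]$. I will prove:
\begin{itemize}
\item[(A)] if $x\sub\check{\V}_U$ and $x\in\V^\B/U$, then $x$ is CEU wrt.~$U$;
\item[(B)] if $x\sub\check{\V}_U$, $x\in\bigcap_A M_A$ and $x$ is CEU wrt.~$U$, then $x\in\V^\B/U$.
\end{itemize}
Then $(1)\Rightarrow(2)$ is immediate from (A), and for $(2)\Rightarrow(1)$ it suffices, by $\in$-recursion (coding sets by sets of ordinals, using $\AC$ in $\check{\V}_U[G]$ and that $\bigcap_A M_A$ is closed under absolute operations such as $h^{-1}``z$), to show that every $c\in\bigcap_A M_A$ with $c\sub\On$ lies in $\V^\B/U$; but such $c$ is a subset of $\check{\V}_U$ (ordinals lie in $\check{\V}_U$), hence CEU by (2), hence in $\V^\B/U$ by (B).

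\emph{Proof of (A).} Since $x\in\check{\V}_U[G]$, fix a $j(\B)$-name $\dot{x}\in\check{\V}_U$ with $\dot{x}^G=x$. As $\check{\V}_U$ is the direct limit of the $M_A$, fix a maximal antichain $A_0$ and $\dot{z}\in M_{A_0}$ with $\pi_{A_0,\infty}(\dot{z})=\dot{x}$; by elementarity $\dot{z}$ is a name in $M_{A_0}=\Ult(\V,U_{A_0})$, so $\dot{z}=[\seq{\dot{z}_a}{a\in A_0}]_{U_{A_0}}$ with each $\dot{z}_a\in\V$ a $\B$-name. For $p\in\P_{\le A_0}$, with $a\in A_0$ the unique element satisfying $p\le a$, put $x_p\mdf(\dot{z}_a)^{G_p}$, the interpretation of the name $\dot{z}_a$ by the ultrafilter $G_p$. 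Given a maximal antichain $B\le^*A_0$, let $\rho\colon B\To A_0$ send $b$ to the unique element of $A_0$ above it; then $\rho$ projects $U_B$ onto $U_{A_0}$, so $\pi_{A_0,B}(\dot{z})=[\seq{\dot{z}_{\rho(b)}}{b\in B}]_{U_B}$, and since $G_B=[\seq{G_b}{b\in B}]_{U_B}$, \Los{} gives
\[ (\pi_{A_0,B}(\dot{z}))^{G_B}=[\seq{(\dot{z}_{\rho(b)})^{G_b}}{b\in B}]_{U_B}=[\seq{x_b}{b\in B}]_{U_B}=[\vx\restrict B]_{U_B}. \]
On the other hand $G_B$ is generic over $M_B$ with $\pi_{B,\infty}(G_B)=G$ and $\pi_{B,\infty}(\pi_{A_0,B}(\dot{z}))=\dot{x}$, so (using $x\sub\check{\V}_U\sub M_B$ and that the set just computed already lies in $M_B$) $(\pi_{A_0,B}(\dot{z}))^{G_B}=\pi_{B,\infty}^{-1}``x=x_B$. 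Hence $[\vx\restrict B]_{U_B}=x_B$ for every $B\le^*A_0$, i.e.~$\vx$ is a uniform representation of $x$ below $A_0$. It is continuous: given $y$ and $p\le a\in A_0$, use the \Prikry{} property to get $q\le_1 p$ deciding the forcing statement $\check{y}\in\dot{z}_a$; since every $r\le q$ satisfies $r\in G_r$ (reflexivity of $\le_1$) and $G_r$ is an upward-closed ultrafilter, whether $y\in x_r$ depends only on how $q$ decided that statement, not on $r$. So $x$ is CEU.

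\emph{Plan for (B).} The plan is to reverse the computation in (A): produce $\dot{x}=\pi_{A_0,\infty}(\dot{z})\in\check{\V}_U$ with $\dot{z}=[\seq{\dot{z}_a}{a\in A_0}]_{U_{A_0}}$, each $\dot{z}_a\in\V$ a $\B$-name, arranged so that $(\dot{z}_a)^{G_b}=x_b$ for all conditions $b\le a$; then for every $B\le^*A_0$, \Los{} yields $(\pi_{A_0,B}(\dot{z}))^{G_B}=[\seq{x_b}{b\in B}]_{U_B}=x_B$, and since the maximal antichains refining $A_0$ are cofinal among all maximal antichains this pins down $\dot{x}^G=x$, so $x\in\check{\V}_U[G]=\V^\B/U$. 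The name $\dot{z}_a$ is built from the representation and the continuity data: for each $y$, continuity makes the set $D_y$ of conditions below which ``$y\in x_{\bullet}$'' is constant dense, the $q\le a$ in $D_y$ with $y\in x_q$ form an antichain, and $e_{y,a}\mdf\bigvee\{q\le a\st q\in D_y\ \text{and}\ y\in x_q\}$ is an element of $\B$ whose complement within $a$ is the join of the $q\le a$ in $D_y$ with $y\notin x_q$; one sets $\dot{z}_a\mdf\{(\check{y},e_{y,a})\st y\in\bigcup_{q\le a}x_q\}$, up to the usual refinements making this a genuine $\B$-name.

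\emph{The main obstacle.} The crux of (B) is the verification that this name is correct, namely that $(\dot{z}_a)^{G_b}=x_b$ for $U_B$-almost every $b\in B$, for every $B\le^*A_0$. Since $G_b$ is an ultrafilter, $(\dot{z}_a)^{G_b}=\{y\st e_{y,a}\in G_b\}$, so what must be shown is that membership of $e_{y,a}$ in $G_b$ matches ``$y\in x_b$'' along the relevant conditions. This is exactly where continuity of $\vx$ is indispensable: it is what makes each $e_{y,a}$ well-defined and what forces ``$e_{y,a}\in G_b$'' to agree with the stabilized value of ``$y\in x_{\bullet}$''—and it may first be necessary to replace $\vx$ by an equivalent representation with better coherence—while the hypothesis $x\in\bigcap_A M_A$ is what guarantees $x\in M_{A_0}$ and $\pi_{A_0,\infty}^{-1}``x=[\vx\restrict A_0]_{U_{A_0}}\in M_{A_0}$ in the first place. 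Uniform representability of $G$ is used throughout, as the device that lets \Los{} computations in each $M_B$ see $\vx$ and $G$ coherently. Once the correctness of $\dot{z}_a$ is in place the remaining steps are \Los-style bookkeeping; the full argument is carried out in \cite{FH:BVU}.
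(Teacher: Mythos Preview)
The paper does not prove this statement; it is quoted as a fact from \cite{FH:BVU}, so there is no in-paper proof to compare against. Your outline is along the right lines and the decomposition into (A) and (B) together with the rank-induction reduction for $(2)\Rightarrow(1)$ is sound, but there are two issues worth flagging.

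First, in your argument for (A) the assertion ``$\pi_{B,\infty}(G_B)=G$'' is false as written: since $B$ is simple, $G_B\in M_B$, hence $\pi_{B,\infty}(G_B)\in\check{\V}_U$, whereas $G\notin\check{\V}_U$ because $G$ is generic over $\check{\V}_U$. What you actually need (and what makes the computation go through) is that for $z\in M_B$ one has $z\in(\pi_{A_0,B}(\dot z))^{G_B}$ iff $\pi_{B,\infty}(z)\in\dot x^G=x$; this follows by applying elementarity of $\pi_{B,\infty}$ to the Boolean value $\BV{\check z\in\pi_{A_0,B}(\dot z)}^{M_B}$ and then using the defining equation $G_B=\pi_{B,\infty}^{-1}``G$. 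With that correction (A) is fine, including the continuity step.

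Second, your treatment of (B) is only a sketch, and at the point you label ``the main obstacle'' you explicitly defer back to \cite{FH:BVU}, which is the very reference the paper cites for the fact. So you have not actually supplied an independent proof of that direction; the genuine work---showing that the name $\dot z_a$ built from the continuity data evaluates correctly $U_B$-almost everywhere---is precisely what is missing. Your reduction for $(2)\Rightarrow(1)$ is correct, but it would be clearer to say explicitly that the bijection $h$ onto the relevant rank-initial segment is chosen in $\V^\B/U$ (hence lies in every $M_A$), which is what makes $h^{-1}``x\in\bigcap_A M_A$ an absolute computation.
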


In order to formulate the second main fact, let us say that a binary relation $a\sub\On\times\On$ is \emph{a code for the set $x$} if, letting $b$ be the field of the $a$ (i.e., the set of ordinals that occur as first or second coordinates of elements of $a$), $\kla{b,a}$ is extensional and well founded, and the Mostowski-collapse of $\kla{b,a}$ is $\kla{\TC(\{x\}),\in}$.

\begin{fact}
\label{fact:TheCorrectIntersectionModel}
If $G$ is uniformly represented with respect to $U$, then
\[\V^\B/U=\{x\in\bigcap_{A\sub\P} M_A\st x\ \text{has a CEU code wrt.~ $U$}\}\]
\end{fact}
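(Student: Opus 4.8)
\emph{Approach.} I would prove both inclusions, reducing each to the case of subsets of $\check{\V}_U$ by coding; the heart of the matter is a single lemma that pins down, among subsets of $\check{\V}_U$, exactly those lying in $\V^\B/U$. Two preliminary observations set the stage. Since $\V^\B/U=\check{\V}_U[G]$ is a generic extension of $\check{\V}_U$, the two models have the same ordinals, and as $\check{\V}_U$ is transitive with an elementary $j\colon\V\To\check{\V}_U$ this common class is $\On$; hence every code $a\sub\On\times\On$ is automatically a subset of $\check{\V}_U$. And since $G$ is uniformly represented, $G_A=[\vec{G}\rest A]_{U_A}\in M_A$ for every maximal antichain $A\sub\P$ — i.e.\ every maximal antichain is simple — so Lemma~\ref{lem:EasyDirectionForSimpleAntichainsGlobal} yields $\V^\B/U\sub\bigcap_A M_A$. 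Finally, each $M_A=\Ult(\V,U_A)$ is an inner model, so $\bigcap_A M_A\sub\V$, and every member of it is a genuine set admitting a genuine (well-founded, extensional) code.

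\emph{Key Lemma: for $y\sub\check{\V}_U$, one has $y\in\V^\B/U$ iff $y$ is CEU with respect to $U$.} For the forward direction, write $y=\dot{y}^G$ with $\dot{y}\in\check{\V}_U$ a $j(\B)$-name; since $\check{\V}_U$ is the direct limit of the $M_A$, fix a maximal antichain $A$ and $\bar{y}\in M_A$ with $\pi_{A,\infty}(\bar{y})=\dot{y}$, and represent $\bar{y}=[h]_{U_A}$ with $h\colon A\To\V$. Letting $a(p)$ denote the unique element of $A$ above $p\in\P_{\le A}$, put $y_p=$ the interpretation of the name $h(a(p))$ by the filter $G_p$. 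A \Los-theorem computation, using $\pi_{A,B}([h]_{U_A})=[\seq{h(a(p))}{p\in B}]_{U_B}$ and $[\vec{G}\rest B]_{U_B}=\pi_{B,\infty}^{-1}``G$, then gives $[\seq{y_p}{p\in B}]_{U_B}=\pi_{B,\infty}^{-1}``y$ for every $B\le^* A$, so $\seq{y_p}{p\in\P_{\le A}}$ uniformly represents $y$ below $A$; continuity reduces to the fact that, for $z\in\check{\V}_U$ and $p$ below a fixed $a\in A$, the assertion ``$z\in y_p$'' is equivalent to membership of the fixed Boolean value $\BV{\check{z}\in h(a)}$ in $G_p$, which stabilizes below a dense set of conditions — a decidability property of $\vec{G}$ that in the concrete forcings of this paper is immediate from the \Prikry{} property.

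\emph{Backward direction and the Fact.} Given a continuous uniform representation $\seq{y_p}{p\in\P_{\le A_0}}$ of $y$ below $A_0$, continuity ensures that for each $z$ the truth value of ``$z\in y_r$'' is constant below a set of conditions dense below $A_0$, so one sets, inside $\check{\V}_U[G]$,
\[\tilde{y}=\{z\st\exists p\in G\cap\P_{\le A_0}\quad\forall r\le p\quad z\in y_r\};\]
the identities $[\seq{y_p}{p\in B}]_{U_B}=\pi_{B,\infty}^{-1}``y$ (for $B\le^* A_0$) make these local data cohere into a genuine $j(\B)$-name over $\check{\V}_U$, and genericity of $G$ over $\check{\V}_U$ then yields $\tilde{y}=y$, so $y\in\V^\B/U$. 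The Fact now follows. If $x\in\V^\B/U$, then $x\in\bigcap_A M_A$ by the first paragraph; choosing, in $\V^\B/U$, a code $a\sub\On\times\On$ for $x$, we have $a\sub\check{\V}_U$ and $a\in\V^\B/U$, so $a$ is CEU by the Key Lemma, i.e.\ $x$ has a CEU code, and $x$ lies in the right-hand side. Conversely, if $x\in\bigcap_A M_A$ has a CEU code $a$, then $a\sub\check{\V}_U$ is CEU, so $a\in\V^\B/U$ by the Key Lemma; since $x$ is a genuine set, $a$ is genuinely well-founded and extensional, hence remains so in the transitive class $\V^\B/U$, and its (absolute) transitive collapse there returns $x$, so $x\in\V^\B/U$.

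\emph{Main obstacle.} Everything substantive is in the Key Lemma, and the harder half is the backward direction: manufacturing $y$ inside $\check{\V}_U[G]$ out of a continuous eventually uniform representation and verifying $\tilde{y}=y$. This requires showing that the ``local guesses'' $y_p$ cohere, along finer and finer antichains, with the embeddings $\pi_{B,\infty}$ and with the representation of $G$ as $[\vec{G}]$; continuity is precisely the hypothesis that forces membership in $y$ to be decided by a condition of $G$ — which is what is needed for $y$ to belong to the generic extension at all — and it is the failure of this in general that accounts for $\bigcap_A M_A$ being strictly larger than $\V^\B/U$.
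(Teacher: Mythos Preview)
The paper does not prove this statement: it is quoted as a \emph{Fact} from \cite{FH:BVU}, with no proof given here. So there is no ``paper's own proof'' to compare against; I can only assess your attempt on its own terms.

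Your overall architecture---reduce to subsets of $\check{\V}_U$ via coding, and prove a Key Lemma characterizing which such subsets lie in $\V^\B/U$---is sound and is almost certainly the intended route. The coding wrap-up in your final paragraph is fine: Mostowski collapse is absolute between transitive classes, and you correctly use simplicity of all antichains (from uniform representation of $G$) to get $\V^\B/U\sub\bigcap_A M_A$.

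In the forward direction of your Key Lemma you are actually too cautious. Continuity does \emph{not} require the \Prikry{} property. With $y_p=h(a(p))^{G_p}$ and, for fixed $a\in A$, $b=\BV{\check{z}\in h(a)}$, any $p\le a$ has an extension $q\in\P$ below $b\wedge p$ or below $(-b)\wedge p$; since we may assume $r\in G_r$ for all $r$, upward closure of $G_r$ then forces $b\in G_r$ (resp.\ $-b\in G_r$) for every $r\le q$, so membership of $z$ in $y_r$ is constant below $q$. No appeal to a direct extension ordering is needed.

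The genuine gap is in the backward direction. Your displayed definition of $\tilde{y}$ mixes objects from different universes: $G$ is a filter on $j(\B)$ living in $\V^\B/U$, while $\P_{\le A_0}$ and the family $\seq{y_r}{r}$ live in $\V$, so ``$p\in G\cap\P_{\le A_0}$'' and ``$z\in y_r$'' (for $z\in\check{\V}_U$) are not literally meaningful. What you need is a $\B$-name $\dot{y}\in\V$---say $\dot{y}=\{(\check{z},p)\st p\in\P_{\le A_0}\ \text{and}\ \forall r\le p\ z\in y_r\}$---and then a verification that $[\dot{y}]_U=y$. That verification is the substantive step you have not carried out: one must show, for each element $w=\pi_{B,\infty}([f]_{U_B})$ of $\check{\V}_U$ (with $B\le^*A_0$), that $\BV{\check{w}\in\dot{y}}\in U$ iff $[f]_{U_B}\in y_B=[\seq{y_p}{p\in B}]_{U_B}$. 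This uses continuity to pass from the set $\{p\in B\st f(p)\in y_p\}\in U_B$ to a condition below which membership is stable, and then uses the uniform representation of $G$ to connect $U$-membership of the relevant join back to $U_B$. Your phrase ``genericity of $G$ over $\check{\V}_U$ then yields $\tilde{y}=y$'' is where the real work is hiding; as written it is an assertion, not an argument.
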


Since we know that the generalized \Prikry{} forcing satisfies the strong \Prikry{} property, which implies that $G$ is uniformly represented with respect to $U=G_\eins$, by Theorem \ref{thm:StrongPrikryImpliesGisUniformlyRepresented-poset}, these two facts apply, and they give us the following information.

\begin{thm}
If $\B$ is the Boolean algebra of a generalized \Prikry{} forcing and $U$ is the canonical ultrafilter, then
\[\V^\B/U=\{x\in\bigcap_{A\sub\P} M_A\st x\ \text{has a CEU code wrt.~ $U$}\}\]
Moreover, the full \Bukovsky-Dehornoy phenomenon, $\V^\B/U=\bigcap_{A\sub\P}M_A$, applies iff every $x\in\bigcap_{A\sub\P}M_A$ with $x\sub\check{V}_U$ is CEU wrt.~$U$.
\end{thm}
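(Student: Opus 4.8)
The plan is to obtain the theorem by combining the structural facts already established for generalized \Prikry{} forcing with the two Facts on CEU representations imported from \cite{FH:BVU}, so that essentially no new argument is needed. First I would invoke Theorem \ref{thm:GeneralizedPrikryForcingHasStrongPrikryProperty}: the generalized \Prikry{} forcing $\P$, together with its direct extension ordering $\le_1$, satisfies the strong \Prikry{} property. Hence Theorem \ref{thm:StrongPrikryImpliesGisUniformlyRepresented-poset} applies: setting $U=G_\eins$ and letting $\vG=\seq{G_p}{p\in\P}$ be the canonical system of ultrafilters supplied by Lemma \ref{lem:DirectedPrikryPropertyGivesUltrafilters-poset}, the sequence $\vG$ is a uniform representation of $G$ over $\P$ with respect to $U$. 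In particular $G$ is uniformly represented with respect to $U$, which is the hypothesis shared by Facts \ref{fact:CharacterizationOfB-DwhenGisUP} and \ref{fact:TheCorrectIntersectionModel}.

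The one point that then needs verification is that the ultrafilter $U=G_\eins$ just produced is the same as the ``canonical ultrafilter'' named in the statement, namely the pullback $\pi_{0,\omega}^{-1}``G^*_\vkappa$ of the generic filter generated by the critical sequence of the imitation iteration of $\P$. This is exactly the content of the last sentence of Lemma \ref{lem:PullbackOfCriticalSequenceEqualsCanonicalUFInducedByStrongPrikry}, which asserts that, just as for \Prikry{} forcing, $\pi_{0,\omega}^{-1}``G^*_\vkappa=G_\eins=\{b\in\B\st\exists p\le_1\eins\ \ p\le b\}$ in the case of generalized \Prikry{} forcing. So the canonical $U$ does satisfy the uniform-representation hypothesis of the two Facts. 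I would also note that the Boolean ultrapower by $U$ is well-founded here, since $\check{\V}_U$ is the iterated ultrapower of the $\vU$-sequence in the sense of \cite{Fuchs:COPS}, so that $\V^\B/U$ and the models $M_A$ are genuine transitive structures and the Facts are applicable.

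With these pieces in place the theorem is immediate. Fact \ref{fact:TheCorrectIntersectionModel}, applied to this $U$, yields
\[\V^\B/U=\{x\in\bigcap_{A\sub\P} M_A\st x\ \text{has a CEU code wrt.~}U\},\]
which is the first displayed equality. For the ``moreover'' clause, Fact \ref{fact:CharacterizationOfB-DwhenGisUP} (the equivalence of items \ref{item:B-Dholds} and \ref{item:ThingsInTheIntersectionModelHaveContinuousUniformReps}), again applied to $U$, says precisely that $\V^\B/U=\bigcap_{A\sub\P}M_A$ holds if and only if every $x\in\bigcap_{A\sub\P}M_A$ with $x\sub\check{\V}_U$ is CEU with respect to $U$.

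The argument involves no new combinatorics beyond Theorem \ref{thm:GeneralizedPrikryForcingHasStrongPrikryProperty}; the only place calling for any care is the identification of $G_\eins$ with the imitation-iteration pullback ultrafilter, so that Facts \ref{fact:CharacterizationOfB-DwhenGisUP} and \ref{fact:TheCorrectIntersectionModel} can be quoted verbatim for the ultrafilter named in the theorem. Everything else is bookkeeping, assembling results proved earlier in the paper and in \cite{FH:BVU}.
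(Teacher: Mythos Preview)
Your proposal is correct and follows essentially the same approach as the paper: the paper derives the theorem immediately from the strong \Prikry{} property of generalized \Prikry{} forcing (Theorem~\ref{thm:GeneralizedPrikryForcingHasStrongPrikryProperty}) via Theorem~\ref{thm:StrongPrikryImpliesGisUniformlyRepresented-poset}, which yields that $G$ is uniformly represented with respect to $U=G_\eins$, and then simply invokes Facts~\ref{fact:CharacterizationOfB-DwhenGisUP} and~\ref{fact:TheCorrectIntersectionModel}. Your additional remarks on identifying $G_\eins$ with the pullback ultrafilter (Lemma~\ref{lem:PullbackOfCriticalSequenceEqualsCanonicalUFInducedByStrongPrikry}) and on well-foundedness are correct and make explicit what the paper leaves implicit.
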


It is an open question whether the \Bukovsky-Dehornoy phenomenon holds for the generalized \Prikry{} forcing in the absence of a skeleton. Thus, generalized \Prikry{} forcing that's not short is a good test case for the question whether the sufficient criterion of the existence of a simple skeleton is actually sharp.

\end{document}